\documentclass[11pt]{amsart}
\usepackage[margin=1in]{geometry}
\usepackage{amsmath,amssymb,amsthm,mathtools}
\usepackage{hyperref}
\usepackage{enumitem}
\usepackage{url}
\usepackage{tabularx}
\usepackage{longtable}
\hypersetup{colorlinks=true,linkcolor=blue,citecolor=blue,urlcolor=blue}

\theoremstyle{plain}
\newtheorem{theorem}{Theorem}[section]
\newtheorem{proposition}[theorem]{Proposition}
\newtheorem{lemma}[theorem]{Lemma}
\newtheorem{corollary}[theorem]{Corollary}

\newtheorem{theoremx}{Theorem}

\theoremstyle{definition}
\newtheorem{definition}[theorem]{Definition}
\newtheorem{construction}[theorem]{Construction}
\theoremstyle{remark}
\newtheorem{remark}[theorem]{Remark}

\DeclareMathOperator{\Res}{Res}
\DeclareMathOperator{\Sym}{Sym}
\DeclareMathOperator{\Aut}{Aut}
\DeclareMathOperator{\Mor}{Mor}

\DeclareMathOperator{\Hom}{Hom}
\DeclareMathOperator{\Br}{Br}
\newcommand{\PGL}{\mathrm{PGL}}
\newcommand{\SL}{\mathrm{SL}}
\DeclareMathOperator{\Pic}{Pic}
\newcommand{\GL}{\mathrm{GL}}
\newcommand{\PP}{\mathbb{P}}
\newcommand{\Aff}{\mathbb{A}}
\newcommand{\kbar}{\overline{k}}
\newcommand{\Gammak}{\Gamma_k}
\newcommand{\calR}{\mathcal{R}}
\newcommand{\calC}{\mathcal{C}}
\newcommand{\Mat}{\mathrm{Mat}_2}
\newcommand{\calO}{\mathcal{O}}
\newcommand{\SB}{\mathrm{SB}}
\DeclareMathOperator{\Spec}{Spec}
\DeclareMathOperator{\ind}{ind}
\DeclareMathOperator{\per}{per}

\title[Rationality of Kontsevich moduli of stable maps]{Rationality of Kontsevich Moduli of Stable Maps, and their Galois Descent}
\author{Jaya NN Iyer}

\address{The Institute of Mathematical Sciences, HBNI, CIT Campus, Taramani, Chennai 600113, India}
\email{jniyer@imsc.res.in}

\address{Max-Planck-Institut f\"ur Mathematik, Vivatsgasse 7, 53111 Bonn, Germany}
\email{jniyer@mpim-bonn.mpg.de}

\date{\today}

\subjclass[2020]{Primary 14E08, 14D23, 14G27; Secondary 14H10, 14M20, 14N35, 14L30, 14F22, 11E04}

\keywords{Kontsevich moduli space, stable maps, rationality, unirationality, Galois descent, twisted forms, Brauer group, Severi--Brauer variety, moduli of binary forms, M\"obius curves, $\overline M_{0,n}$}

\begin{document}

\begin{abstract}
Let $k$ be an arbitrary field and $X$ a smooth projective, geometrically
rational $k$-variety.  We study the Galois twists of Kontsevich moduli spaces and stacks
$\overline M_{0,m}(X,\beta)$ of genus zero stable maps, for curve classes
$\beta$ of low degree, and determine when  they have a $k$-point or they are rational over $k$.
Over $\mathbb{C}$ and $X$ homogeneous, this is a well-known theorem of Kim-Pandharipande \cite{KP}.

We prove that $\overline M_{0,m}(\PP^r,d)$ and
$\overline M_{0,m}((\PP^1)^n,\mathbf d)$ are irreducible and $k$-rational
over every field, for all $m\ge3$ and all degrees, and give a birational
formula, in characteristic zero, reducing the remaining cases to the moduli
of binary forms.  For
twisted targets we show that a $k$-point of the automorphism-free locus of
$\overline M_{0,E}(\SB(A),d)$ forces $[A]=d\,[C]$ in $\Br(k)$, where $C$ is
the domain conic. On $\overline M_{0,n}$ itself, for $k$ infinite of
characteristic $\ne2$, we exhibit a distinguished
component of $\overline M_{0,0}(\overline M_{0,n},\beta)$, of the expected
dimension, which descends to every twisted form and carries a dense set of
$k$-points.

This transfers to spaces of maps the rationality results of
Florence--Reichstein, Duncan--Reichstein, Hassett--Tschinkel--Zhang and
on twisted forms of $\overline M_{0,n}$  which is the motivating case.
\end{abstract}

\maketitle

\tableofcontents

\section{Introduction}

Let $k$ be a field and let $X$ be a
smooth projective $k$-variety which is geometrically rational.  For a curve
class $\beta$ and $m\ge0$ one has the Kontsevich moduli stack \cite{Ko}
$\overline M_{0,m}(X,\beta)$ of $m$-pointed genus zero stable maps.  The
question this paper is about is:

\begin{quote}
\emph{when is $\overline M_{0,m}(X,\beta)$ rational over $k$?}
\end{quote}

Over an algebraically closed field of characteristic zero: for $X$ homogeneous, Kim and Pandharipande \cite{KP} proved that
$\overline M_{0,m}(X,\beta)$ is irreducible and rational (Theorem
\ref{thm:KP}).  Over a general $k$: $X$ itself may be
a nontrivial form --- a Severi--Brauer variety rather than $\PP^r$, second, even when $X$ is split, the moduli space can acquire a
form of its own, because the Galois group may permute the markings, or move
$\beta$ inside $N_1(X_{\kbar})$.  Both phenomena are visible already in the
smallest cases, and both are genuinely obstructed: we exhibit twisted
Kontsevich spaces of geometrically rational targets whose automorphism-free
locus carries no $k$-point whatsoever.

We restrict throughout to \emph{low degree} classes.  In this situation explicit $k$-models exist, and the models carry the rationality arguments across from $\kbar$ to $k$.

We state the four main theorems.  The first concerns split targets, and is
proved over an arbitrary field with no hypothesis on the characteristic.

\begin{theoremx}[Proposition \ref{prop:claim22}, Proposition \ref{prop:kRatX}, Theorem \ref{thm:noname}]\label{thmA}
Let $k$ be any field, $r\ge1$, $d\ge1$, and
$\mathbf d\in\mathbb Z_{\ge1}^n$.
\begin{enumerate}[label=(\alph*)]
\item For every $m\ge3$, the Kontsevich spaces
$\overline M_{0,m}(\PP^r,d)$ and $\overline M_{0,m}((\PP^1)^n,\mathbf d)$
are irreducible $k$-rational varieties.  The same holds for $m\le2$ when
some coordinate degree equals $1$.
\item If $\mathrm{char}\,k=0$ and $d\ge3$, there is a birational equivalence
over $k$
\[
  \overline M_{0,m}(\PP^r,d)\ \sim_k\ M_{0,m|d}\times\Aff^{\,r(d+1)}_k ,
\]
where $M_{0,m|d}=\big((\PP^1)^m\times\PP^d\big)/\PGL_2$.  Consequently, for
$m=0$:
\begin{enumerate}[label=(\roman*)]
\item if the moduli of binary forms of degree $d$ is $k$-rational, then so is
$\overline M_{0,0}(\PP^r,d)$;
\item if $\overline M_{0,0}(\PP^r,d)$ is $k$-rational, then the moduli of
binary forms of degree $d$ is stably $k$-rational.
\end{enumerate}
\end{enumerate}
\end{theoremx}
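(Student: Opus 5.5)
For part (a), I would take an explicit open subset and write down a $k$-model. Consider the open locus $U\subset\overline M_{0,m}(\PP^r,d)$ of maps with smooth domain. When $m\ge3$, the first three markings rigidify the domain: there is a unique isomorphism of the domain with $\PP^1_k$ sending them to $0,1,\infty$. This isomorphism is defined over $k$, because the markings are $k$-points.

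Under this identification, $U$ becomes an open subset of
\[
\big((\PP^1)^{m-3}\setminus\Delta\big)\times \Mor_d(\PP^1,\PP^r).
\]
The space $\Mor_d(\PP^1,\PP^r)$ is the open subset of $\PP\big(k[s,t]_d^{\,r+1}\big)\cong\PP^{(r+1)(d+1)-1}$ where the $r+1$ forms have no common zero. Since this is a nonempty open subset of projective space, $U$ is $k$-rational.

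The product case $(\PP^1)^n$ with degree $\mathbf d$ works the same way, using $\prod_i \Mor_{d_i}(\PP^1,\PP^1)$. All of this is characteristic-free.

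Irreducibility needs the boundary (maps with reducible domain) to lie in the closure of $U$. Over $\kbar$ this is the Kim--Pandharipande result for convex homogeneous targets, Theorem~\ref{thm:KP}. For arbitrary characteristic I would instead argue that every boundary stratum has dimension strictly smaller than $\dim U$. A dimension count on $\prod \overline M_{0,m_i}(X,\beta_i)$ glued along evaluation maps gives this. Since $X$ is convex, $H^1(f^*T_X)=0$, so the space is unobstructed and smoothable, and each stratum deforms into $U$.

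For $m\le2$ with some coordinate degree $d_j=1$, the $j$-th factor $\PP^1\to\PP^1$ is an isomorphism. It canonically identifies the domain with $\PP^1_k$ over $k$. The remaining data are then either honest maps $\PP^1\to\PP^1$ or points of $\PP^1$, and both are rational.

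For part (b), work on $U$ and keep the $\PGL_2$-action. We have
\[
U\sim \big((\PP^1)^m\times \Mor_d(\PP^1,\PP^r)\big)/\PGL_2.
\]
First project a tuple of forms $(f_0,\dots,f_r)$ to $f_0\in\PP(k[s,t]_d)=\PP^d$. This is $\PGL_2$-equivariant, and the fiber over $f_0$ is an open subset of the affine space of $(f_1,\dots,f_r)$ modulo the scaling of $f_0$. The fiber is therefore a $\PGL_2$-linearized vector bundle of rank $r(d+1)$ over an open subset of $(\PP^1)^m\times\PP^d$.

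For $d\ge3$ the action of $\PGL_2$ on $\PP^d$ is generically free. The no-name lemma then gives
\[
\big((\PP^1)^m\times\PP^d\times\Aff^{r(d+1)}\big)/\PGL_2\ \sim\ M_{0,m|d}\times\Aff^{r(d+1)}.
\]
The step I expect to be the main obstacle is that $\PGL_2$ is not special. The descent of the vector bundle needs a genuine linearization by $\PGL_2$, not merely by $\GL_2$. I would use $\SL_2$ acting on forms of degree $d$, twisted by a character-like scaling, and use the fact that the central $\mu_2$ acts compatibly on $f_0$ and on the $f_i$ because they all have the same degree $d$. This makes the bundle $\PGL_2$-linearized, and characteristic zero guarantees that a geometric quotient exists generically (Rosenlicht).

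The consequences for $m=0$ are then formal. If $\PP^d/\PGL_2$ is $k$-rational, then so is its product with $\Aff^{r(d+1)}$, which gives (i). If $\overline M_{0,0}(\PP^r,d)$ is $k$-rational, then $(\PP^d/\PGL_2)\times\Aff^{r(d+1)}$ is rational, which is exactly stable rationality of the moduli of binary forms, giving (ii).
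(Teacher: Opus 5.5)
Your proposal follows the paper's route. Three $k$-rational markings, or a degree-one coordinate normalised to $\mathrm{id}$, rigidify the source over $k$ and present the interior as an open subset of a product of projective spaces. Convexity forces every component to meet that irreducible interior. Part (b) is the no-name lemma applied to $(f_0,\dots,f_r)\mapsto f_0$.

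The differences are minor:
\begin{itemize}
\item You keep components out of the boundary by a dimension count on glued products. This tacitly needs the fibre products along evaluation maps to have the expected dimension, e.g.\ by flatness of $ev$ on a homogeneous target. The paper instead notes (Proposition~\ref{prop:kRatX}) that the boundary is a Cartier divisor on the smooth stack.
\item You apply the lemma to $\PGL_2$ acting on the bundle $\mathcal O_{\PP^d}(1)\otimes B$ over $(\PP^1)^m\times\PP^d$. The paper lets $H=(\GL_2\times\mathbb G_m)/Z$ act linearly on $V=A\oplus B$ over $A$, which makes the linearisation automatic.
\end{itemize}
Your worry that $\PGL_2$ is not special is beside the point. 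Lemma~\ref{lem:noname} needs a linearised bundle and a generic torsor, not a special group. Your remark that the centre acts on every $f_i$ by the same scalar $\mu^{-d}$, hence trivially on $\Hom(\langle f_0\rangle,B)$, is exactly what supplies the $\PGL_2$-linearisation.

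One step is false as stated, and the paper's proof of Theorem~\ref{thm:noname} makes the same claim. $\PGL_2$ does \emph{not} act generically freely on $\PP^d=\PP(\Sym^d)$ for $d=3,4$:
\begin{itemize}
\item three distinct points on $\PP^1$ have stabiliser $S_3$;
\item four general points have stabiliser the Klein four-group, because the double transpositions preserve the cross-ratio.
\end{itemize}
On $(\PP^1)^m\times\PP^d$ the action is generically free for all $d\ge3$ once $m\ge1$, since a general marked point is fixed by no nontrivial element of that finite stabiliser. For $m=0$ it is generically free only when $d\ge5$. So for $m=0$ and $d\in\{3,4\}$, which is exactly the case feeding (i) and (ii), the no-name lemma does not apply as you invoke it.

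For $d=3$ the repair is easy. $\PP^3$ has a dense orbit through the $k$-point $[xy(x-y)]$ with constant stabiliser $S_3$. Hence $\overline M_{0,0}(\PP^r,3)$ is birational to $U/S_3$, where $U=\Hom(\langle f_0\rangle,B)\cong(\mathrm{triv}\oplus\mathrm{ad})^{\oplus r}$. This representation contains the reflection representation of $S_3$, so $U/S_3$ is $k$-rational by the finite-group no-name lemma. For $d=4$ you need a separate argument, or you should restrict the claim to $m\ge1$ or $d\ge5$.
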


Over $k=\mathbb C$, the moduli space of binary
forms of degree $d$, that is $\PP(\Sym^d)/\!\!/\PGL_2=M_{0,0|d}$, is rational
for every $d$: for $d$ odd by Katsylo \cite{Kat,Kat84}, for $d$ even by
Bogomolov--Katsylo \cite{BK85} and Bogomolov \cite{Bog}, the even case in the
guise of the moduli of hyperelliptic curves of genus $g=(d-2)/2$.  Hence the
hypothesis of (b)(i) always holds over $\mathbb C$ and
$\overline M_{0,m}(\PP^r,d)$ is rational there --- which is also Kim and
Pandharipande's Theorem \ref{thm:KP}, whose proof uses exactly this input; see
also Katsylo \cite{Kat84} and Hirschowitz \cite{Hir89} for the case $m=0$,
where $\overline M_{0,0}(\PP^r,d)$ is birational to the Hilbert scheme of
rational curves of degree $d$ in $\PP^r$.

Over an arbitrary $k$, $M_{0,0|d}$ is geometrically rational for every $d$,
and it is $k$-unirational for every $d$, being dominated by
$\PP(\Sym^d)$; but $k$-rationality is known only in low degree, by explicit
classical invariants: $d\le3$, where the quotient is a point; $d=4$, where
the invariants $I$ and $J$ of weights $2$ and $3$ give
$M_{0,0|4}\cong\PP(2,3)\cong\PP^1_k$; and $d=6$, where Igusa's absolute
invariants \cite{Ig60} generate the function field of $M_2$ over any field.
For $d=5$ and for $d\ge7$ we do not know the answer over a general $k$, and
no counterexample is known either.

Taking $E=k^m$ split in Theorem \ref{thmC}(a), that is leaving the markings
untwisted, the hypothesis that $\gcd(m,d)$ be odd holds for $m=1$ and every
$d$, and for $m=2$ exactly when $d$ is odd.  So for $\mathrm{char}\,k=0$,
$r\ge1$ and $d\ge3$ the spaces
\[
  \overline M_{0,1}(\PP^r,d)\ \ (\text{every }d),
  \qquad
  \overline M_{0,2}(\PP^r,d)\ \ (d\text{ odd})
\]
are stably $k$-rational unconditionally --- precisely the range in which (a)
says nothing and (b) is conditional.  The same argument with no markings at
all, the degree $d$ divisor on the domain conic then being the only one
available, gives stable $k$-rationality of $\overline M_{0,0}(\PP^r,d)$, and
hence of the binary form moduli $M_{0,0|d}$, for $d$ odd.  

For a twisted
target as in the following, the existence of a single $k$-point of the
automorphism-free locus is obstructed in the Brauer group.

\begin{theoremx}[Theorem \ref{thm:amitsur}, Proposition \ref{prop:converse}]\label{thmB}
Let $A$ be a central simple $k$-algebra of degree $r+1$, $X=\SB(A)$, and $E$
a degree $n$ \'etale $k$-algebra.  If $\overline M_{0,E}(X,d)$ has a
$k$-point in the automorphism-free locus, represented by $f:C\to X$ with $C$
of genus zero, then
\[
  [A]=d\,[C]\qquad\text{in }\Br(k).
\]
Consequently:
\begin{enumerate}[label=(\roman*)]
\item if $d$ is even, then $A$ is split;
\item if some factor of $E$ has odd degree over $k$, then $A$ is split;
\item if $\ind(A)>2$, then the automorphism-free locus of
$\overline M_{0,E}(X,d)$ has no $k$-point, for any $m$, any $E$ and any $d$;
\item for $d$ odd the relation is sharp: if $[A]=[Q]$ for a quaternion
algebra $Q$, if $r\ge d$, and if every field factor $L_i$ of $E$ splits $Q$
(equivalently $C=\SB(Q)$ has an $L_i$-point, which forces $[L_i:k]$ even),
then a twisted Veronese supplies a $k$-point of the automorphism-free
locus.
\end{enumerate}
\end{theoremx}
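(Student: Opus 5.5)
The plan is to pull the Brauer class of $A$ back along $f$ and read it off on the domain conic. A $k$-point in the automorphism-free locus is an actual stable map defined over $k$, not just a Galois-invariant isomorphism class. Since the map has no automorphisms, the descent datum on $(C_{\kbar},p_\bullet,f_{\kbar})$ is effective, and we get a genus zero curve $C$ over $k$ with a $k$-morphism $f:C\to X=\SB(A)$. The markings form a closed subscheme of $C$ isomorphic to $\Spec E$. Here we take $C$ smooth, i.e.\ the point lies in the open stratum. For a nodal domain we would first restrict to the component carrying the class, or argue componentwise, since each component defined over $k$ is again a conic.

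The key step is functoriality of the Brauer class. On $\SB(A)$ there is the tautological relation that $\calO(1)$ descends only as a twisted sheaf: the obstruction to descending $\calO_{X_{\kbar}}(1)$ to $X$ is $[A]\in\Br(k)$. Equivalently, the exact sequence
\[
0\to\Pic X\to(\Pic X_{\kbar})^{\Gammak}\to\Br(k)
\]
sends the generator $\calO(1)$ to $[A]$. The same sequence for $C$ sends $\calO_{\PP^1}(1)$ to $[C]$, the class of the quaternion algebra of the conic. Pullback $f^*:(\Pic X_{\kbar})^{\Gammak}\to(\Pic C_{\kbar})^{\Gammak}$ sends $\calO(1)$ to $\calO(d)$, because $f$ has degree $d$. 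The boundary maps are natural, being the Hochschild--Serre $d_2$ differentials. Hence $[A]=\delta_X(\calO(1))=\delta_C(f^*\calO(1))=d\,\delta_C(\calO(1))=d[C]$. The one point to check is that the boundary map $\Br(k)\to\Br(k)$ induced by the structure maps is the identity. This holds because both are computed over $\Spec k$, and $X(\kbar)$ and $C(\kbar)$ are nonempty.

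For the consequences we use that $[C]$ has period dividing $2$.
\begin{enumerate}[label=(\roman*)]
\item If $d$ is even, then $d[C]=0$, so $[A]=0$.
\item The markings give a point of $C$ over a field factor $L_i$ of odd degree. Thus $C$ has a point of odd degree, hence is split by Springer's theorem (or by a Riesz/degree argument: a conic with an odd-degree point has a $k$-point). So $[C]=0$ and $[A]=0$.
\item If $\ind(A)>2$, then $[A]$ cannot equal $d[C]$, which has index $\le2$. So no $k$-point exists.
\end{enumerate}

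For the converse (iv), take $C=\SB(Q)$ and $d$ odd. The $d$-uple Veronese of $C$ lands in $\SB$ of an algebra Brauer equivalent to $Q^{\otimes d}\sim Q$: concretely, $H^0(C_{\kbar},\calO(d))$ with its twisted Galois action gives a degree $d+1$ algebra $B\sim Q$. Since $r\ge d$ and $A\sim Q\sim B$ with $\deg A=r+1\ge d+1$, there is a twisted-linear embedding $\SB(B)\hookrightarrow\SB(A)$. This comes from $A\cong M_{\ell}(D)$ and $B\cong M_{\ell'}(D)$ with $D$ the common division algebra, so that a right ideal inclusion gives the closed immersion. Composing yields $f:C\to X$ of degree $d$, which is an embedding and hence automorphism-free once markings are added. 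The hypothesis that each $L_i$ splits $Q$ gives an $L_i$-point of $C$, i.e.\ a closed point with residue field $L_i$ when $L_i$ is a field extension containing it. Choosing distinct such closed points gives a marking scheme $\cong\Spec E$. We expect the main obstacle to be making this last step precise: realizing the closed points with exactly the prescribed residue fields and keeping them distinct. This should use that $C(L_i)$ is infinite and Zariski-dense, together with a primitive element argument.
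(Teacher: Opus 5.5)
Your proposal is correct and follows the paper's proof essentially step for step. Naturality of the Hochschild--Serre boundary $\Pic(Y_{\kbar})^{\Gammak}\to\Br(k)$, together with Amitsur's identification $\delta_X(\calO(1))=[A]$ and $f^*\calO(1)=\calO(d)$, gives $[A]=d[C]$; (i)--(iii) follow because $[C]$ is $2$-torsion with index dividing the degree of every closed point; and (iv) is the twisted Veronese composed with the linear embedding $\SB(M_{\ell'}(D))\hookrightarrow\SB(M_\ell(D))$ coming from an inclusion of $D$-modules. On the markings in (iv) you are more careful than the paper: an $L_i$-point only yields a closed point whose residue field is contained in $L_i$, and your density argument supplies what the paper covers only with ``for $k$ infinite one may choose them disjoint''. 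The $k$-points of the rational variety $\Res_{L_i/k}(C_{L_i})$ are Zariski dense for $k$ infinite, while those coming from proper subfields lie in finitely many proper closed subvarieties; this is what produces a genuine closed immersion $\Spec E\hookrightarrow C$.
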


Thus a geometrically rational target can produce a Kontsevich space that is
a form of a rational variety whose automorphism-free locus, a dense open
subscheme carries no $k$-point. The third
theorem says that where $k$-points do exist, the parity of a gcd of degrees
decides whether the obstruction to rationality is present, exactly as in the
motivating case below.

\begin{theoremx}[Corollary \ref{cor:parity}]\label{thmC}
Let $\mathrm{char}\,k=0$, $r\ge1$ and $d\ge3$, let $E$ be a degree $n$ \'etale
$k$-algebra, and twist the markings by $E$.  Then:
\begin{enumerate}[label=(\alph*)]
\item if $\gcd(n,d)$ is odd, $\overline M_{0,E}(\PP^r,d)$ is stably
$k$-rational;
\item if in addition $E$ has three factors isomorphic to $k$, it is
$k$-rational;
\item if $n$ and $d$ are both even, the argument instead produces the class
$\alpha\in\Br\big(k(M_{0,E|d})\big)[2]$ of the generic $\PGL_2$-torsor,
equivalently of the generic domain conic; it obstructs the trivialization
used in (a) and (b), and need not vanish.
\end{enumerate}
\end{theoremx}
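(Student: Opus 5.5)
The plan is to work on the automorphism-free dense open locus, where a general stable map is an embedding-like morphism $f:C\to\PP^r$ of degree $d$ from a smooth genus zero curve $C$, together with a marking $\Spec(E\otimes\kbar)\to C$ twisted by $E$. Over $k$, such a point is a conic $C$ (a $\PGL_2$-torsor, i.e. a $\SB(Q)$), an $E$-point of $C$, and a linear system given by $r+1$ sections of a degree $d$ "line bundle" on $C$. When $C$ is a genuine conic, this bundle exists only when $d$ is even. For $d$ odd it exists only as a twisted sheaf, and this is where the Brauer class enters. Following the birational formula of Theorem~\ref{thmA}(b), I would realize $\overline M_{0,E}(\PP^r,d)$ birationally as the quotient of a $\PGL_2$-equivariant vector bundle. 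Concretely, take $V=(\Sym^d)^{\oplus(r+1)}$ modulo scalars over the space of $E$-twisted markings together with one auxiliary degree $d$ binary form (or the markings themselves when $n\ge3$), whose quotient by $\PGL_2$ is $M_{0,E|d}$. The conclusion would then follow from the no-name lemma, once the $\PGL_2$-action lifts to a $\GL_2$ or $\SL_2$ linear action on the fibres.

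The key steps are these.
\begin{enumerate}[label=(\arabic*)]
\item Over the generic point $\eta$ of $M_{0,E|d}$, the fibre is the space of maps from the generic domain conic $C_\eta$. This conic has class $\alpha\in\Br(k(M_{0,E|d}))[2]$.
\item The fibre is a form of an open subset of $\PP(H^0(\calO(d))^{r+1})$, twisted by the Severi--Brauer variety of $d\alpha$. It is therefore an open in a Severi--Brauer variety (times the split $\PP^r$-factor). When $d\alpha=0$ it is an open in a vector space, giving the $\Aff^{r(d+1)}$ factor after also absorbing the projective scalar.
\item If $d$ is odd, then $d\alpha=\alpha$. Here I would instead show that $\alpha$ itself vanishes when $\gcd(n,d)$ is odd. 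On the generic conic we have the tautological degree $d$ divisor from the auxiliary form and the degree $n$ divisor given by the $E$-marking. Taking a $\mathbb Z$-combination $an+bd=\gcd(n,d)$, which is odd, gives a divisor of odd degree on $C_\eta$. So $C_\eta$ has a rational point and $\alpha=0$.
\item With $\alpha=0$, the generic fibre is rational over the function field of the base. One obtains $\overline M_{0,E}(\PP^r,d)\sim M_{0,E|d}\times\Aff^{N}$, so (a) reduces to stable rationality of $M_{0,E|d}$.
\end{enumerate}

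For step (4) I would use the same odd-divisor trick. The quotient $M_{0,E|d}$ is birational to the space of pairs consisting of a pointed conic and divisors. Once the conic acquires a point over the generic point, the $\PGL_2$-torsor is trivial and $M_{0,E|d}\times\PP^3$ is birational to a linear representation quotient. For $E$ twisted this is a Weil restriction of affine space, hence rational, which gives stable rationality.

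For (b), three $k$-rational markings rigidify $\PGL_2$ completely. The quotient $M_{0,E|d}$ is then an open subset of the remaining data: the other markings as a Weil restriction $R_{E'/k}\Aff^1$, and the binary form, which can be dropped since markings already kill the torsor. This is $k$-rational, and combined with step (4) it gives rationality without stabilization.

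For (c), when $n,d$ are both even, every divisor degree produced is even. The obstruction from step (3) is exactly $\alpha$, the class of the generic conic, and it is the only input that fails. To see that $\alpha$ need not vanish, I would exhibit a specialization, such as a $k$-point of $M_{0,E|d}$ whose conic is a nonsplit $\SB(Q)$ with $E$ split by $Q$, as in Theorem~\ref{thmB}(iv). The main obstacle I expect is step (3)--(4): proving rigorously that the descended fibre is the twisted Severi--Brauer variety with class exactly $d\alpha$, and that the base-change of the no-name lemma holds when the group acts through $\PGL_2$ rather than linearly. This requires passing to $\GL_2$ and tracking the central character $\lambda\mapsto\lambda^d$ on $\Sym^d$.
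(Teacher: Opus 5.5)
Your proposal follows the paper's proof. The shared steps are: the product formula $\overline M_{0,E}(\PP^r,d)\sim_k M_{0,E|d}\times\Aff^{r(d+1)}$; the degree-$n$ marking divisor and degree-$d$ tautological divisor giving the generic domain conic an odd-degree class, so the generic $\PGL_2$-torsor is trivial and $M_{0,E|d}\times\PGL_2$ is $k$-rational; rigidification by three $k$-rational markings for (b); and $\alpha$ as the obstruction in (c), where your specialization to a $k$-point of the free locus with nonsplit conic supplies the ``need not vanish'' that the paper only asserts. The obstacle you flag is handled in the paper by applying the no-name lemma to $H=(\GL_2\times\mathbb G_m)/Z$, $Z=\{(\mu,\mu^d)\}$, which acts linearly on the fibre and generically freely on the base; in fact no nonsplit Severi--Brauer fibre arises over $M_{0,E|d}$, because the generic fibre is an open subset of $H^0(C_\eta,\calO(D_0))^{\oplus r}$, with $D_0$ the tautological degree-$d$ divisor, so $d\alpha=0$ there automatically.
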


We now consider the target $\overline M_{0,n}$ itself, where the
general theory is unavailable because convexity fails.

\begin{theoremx}[Proposition \ref{prop:moebius}, Theorem \ref{thm:twisted}]\label{thmD}
Let $n\ge5$, let $k$ be infinite, let $E$ be a degree $n$ \'etale
$k$-algebra with associated form $X_E$ of $\overline M_{0,n}$, and let
$\beta$ be the M\"obius class, determined by $\beta\cdot\delta_{ij}=2$ and
$\beta\cdot\delta_I=0$ for $3\le|I|\le n-3$.  Then:
\begin{enumerate}[label=(\alph*)]
\item for $n\ge7$ the variety $\overline M_{0,n}$ is not convex, and
$\overline M_{0,m}(\overline M_{0,n},\beta)$ need be neither smooth nor
irreducible;
\item $-K\cdot\beta=2n$, and the M\"obius curves form a dense open subset of
an irreducible component of $\overline M_{0,0}(\overline M_{0,n},\beta)$
which is generically smooth of the expected dimension $3n-6$;
\item that component is $S_n$-stable and descends to every twisted form
$X_E$, where it is dominated by the $k$-rational group
$\Res_{E/k}(\PGL_{2,E})$;
\item $X_E$ is $k$-unirational, and $X_E(k)$ is Zariski dense, every point of
a dense subset of it lying on an explicit $k$-rational M\"obius curve.
\end{enumerate}
\end{theoremx}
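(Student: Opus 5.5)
We prove the four parts of Theorem \ref{thmD} in order. Each part uses an explicit model of the M\"obius curves: the curves in $\overline M_{0,n}$ traced out by a fixed $n$-tuple of points on $\PP^1$ under a one-parameter family of M\"obius transformations. Concretely, fix distinct $p_1,\dots,p_n\in\PP^1$ and an element $g\in\PGL_2$ of order $2$ (an involution), or more generally a pencil. The M\"obius curve is the image of $\PP^1\ni t\mapsto [\PP^1; \phi_t(p_1),\dots,\phi_t(p_n)]$, where $\phi_t$ runs through a suitable conic of M\"obius transformations.

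\textbf{Intersection numbers and $-K\cdot\beta$.} First we compute the intersection numbers. The curve meets $\delta_{ij}$ exactly at the parameters where $\phi_t(p_i)=\phi_t(p_j)$ collapses, which happens twice. It never meets the deeper boundary divisors $\delta_I$ with $3\le|I|\le n-3$. This gives $\beta\cdot\delta_{ij}=2$ and $\beta\cdot\delta_I=0$. Next we write
\[
-K_{\overline M_{0,n}}=\sum_{I}\frac{(|I|-1)(n-|I|-1)\cdot\text{(suitable coefficient)}}{n-1}\,\delta_I-\ldots,
\]
that is, we use the Keel--McKernan formula $-K=\sum_{2\le|I|\le n/2}\bigl(\frac{|I|(n-|I|)}{n-1}-2\bigr)\delta_I$, or equivalently a $\psi$-class expression. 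Only the $\delta_{ij}$ terms contribute. The $\binom n2$ pairs each give $2\bigl(\frac{2(n-2)}{n-1}-2\bigr)$, but the $\psi$-class term also has to be included, so the total should come out to $2n$. I would double check this sign bookkeeping with $n=5$, where $\overline M_{0,5}$ is a del Pezzo surface of degree $5$ and $\beta$ should be a conic-type class with $-K\cdot\beta=10$.

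\textbf{Expected dimension and the component (part (b)).} The expected dimension is $-K\cdot\beta+\dim-3=2n+(n-3)-3=3n-6$. The M\"obius family has parameters: the $n$ points modulo $\PGL_2$ contribute $n-3$, and the pencil data contribute the remainder. For generic smoothness I will show $H^1(\PP^1,f^*T)=0$ at a general M\"obius curve. The plan is to use the forgetful map to $\overline M_{0,n-1}$ and the transversality to the boundary to split $f^*T$ into a sum of line bundles of degree $\ge -1$. This degree computation is the main obstacle. What I would try first is to exhibit the deformations explicitly: moving each $p_i$ gives $n$ independent sections, and the $\PGL_2$-direction gives $3$ more. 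This produces $h^0\ge 3n-3$, hence $h^1=0$ by Riemann--Roch, since $\chi=\deg+\dim=2n+n-3$.

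\textbf{Failure of convexity and $S_n$-descent (parts (a) and (c)).} For part (a), non-convexity for $n\ge7$ follows from a boundary curve $F$ with $T|_F$ having a summand of degree $\le -2$. We take $F$ to be an F-curve inside a stratum $\overline M_{0,4}\times\overline M_{0,n-2}$, where the normal bundle degree is negative. Gluing $F$ to M\"obius curves then gives extra components of excess dimension, so the space is neither smooth nor irreducible.

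For part (c), the component is $S_n$-stable because the construction is symmetric in the $p_i$, and $\beta$ is $S_n$-invariant. Hence Galois descent along the cocycle defining $X_E$ applies. The parametrization by $(\PGL_2)^n$ acting on the $p_i$ twists to $\Res_{E/k}\PGL_{2,E}$, which is $k$-rational because $\PGL_2$ is a rational group. This group dominates the descended component.

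\textbf{Unirationality and density of $k$-points (part (d)).} Since $k$ is infinite, $k$-points of $\Res_{E/k}\PGL_{2,E}$ are dense. Each such point yields a $k$-rational M\"obius curve in $X_E$, and the union of these curves is dense. The domination map then gives $k$-unirationality of $X_E$ and Zariski density of $X_E(k)$.
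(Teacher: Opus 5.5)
\textbf{There are two genuine gaps.} The first is your model of a M\"obius curve, which is degenerate, and the error propagates. If one and the same $\phi_t\in\PGL_2$ is applied to fixed points $p_1,\dots,p_n$, then $[\PP^1;\phi_t(p_1),\dots,\phi_t(p_n)]=[\PP^1;p_1,\dots,p_n]$ for every $t$ with $\phi_t$ invertible, because $\phi_t$ is itself an isomorphism of pointed curves. So your curve is constant in $\overline M_{0,n}$, and no two markings ever collide; at a degenerate member of your conic they all collapse at once. Neither $\beta\cdot\delta_{ij}=2$ nor $\beta\cdot\delta_I=0$ can be read off from it.

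The paper's M\"obius curve uses $n$ \emph{independent} transformations evaluated at one moving parameter: $\mu_g(t)=[\PP^1;g_1(t),\dots,g_n(t)]$ for $g=(g_i)\in\PGL_2^{S}$. In that model:
\begin{itemize}
\item markings $i,j$ collide exactly at the two fixed points of $g_j^{-1}g_i$, which are simple when $g_j^{-1}g_i$ is regular semisimple, and no three collide for general $g$;
\item the dimension $3n-6$ is $3n$ minus the target-diagonal and source-reparametrization copies of $\PGL_2$, which act jointly with trivial generic stabilizer;
\item density in (d) comes from evaluation at a fixed $t_0\in\PP^1(k)$: the map $(g_i)\mapsto(g_i(t_0))$ is dominant onto $(\PP^1)^n$, hence onto $X_E$.
\end{itemize}
Your parts (c) and (d) switch to a $\PGL_2^n$-parametrization. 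That is the right parameter space, but it is not the model you set up, and you never argue the dominance behind ``the union of these curves is dense''.

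The second gap is the step you yourself call the main obstacle, which fails logically as written. Riemann--Roch gives $h^0-h^1=\chi(\mu^*T_X)=2n+(n-3)=3n-3$ for every map of class $\beta$. So exhibiting $3n-3$ independent sections is automatic and says nothing about $h^1$; you would need $h^0\le 3n-3$, or a positivity statement. The missing idea is generic global generation by the family. Differentiating over $k[\epsilon]/(\epsilon^2)$ gives a map $\mathfrak{pgl}_2^{\oplus n}\to H^0(\PP^1,\mu_g^*T_X)$. At any $t$ with $\mu_g(t)\in M_{0,n}$, its composite with evaluation is onto $T_{X,\mu_g(t)}=(\bigoplus_iT_{\PP^1,g_i(t)})/\mathfrak{pgl}_2$, because $\mathfrak{pgl}_2\to T_{\PP^1,p}$ is onto. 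Now suppose $\mu_g^*T_X\cong\bigoplus_j\calO(a_j)$ had some $a_j<0$. Then every global section would lie in $\bigoplus_{a_j\ge0}\calO(a_j)$, and evaluation could be onto at no point. Hence $\mu_g$ is free and $H^1=0$. Since $\dim Q\calR^{\mathrm{un}}$ already equals the expected dimension, the M\"obius locus is dense in a generically smooth component.

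Two smaller corrections. First, the formula you display is $K$, not $-K$: Keel--McKernan gives $K=\sum_j\big(\frac{j(n-j)}{n-1}-2\big)B_j$. With the right sign only $B_2$ contributes, and $-K\cdot\beta=\frac{2}{n-1}\cdot 2\binom n2=2n$ directly, with no $\psi$-term. A $\psi$-term appears only if you use $K=\sum_i\psi_i-2\delta$ instead, with $\delta$ the total boundary; there $\psi_i\cdot\beta=2(n-2)$ gives the same $2n$. Second, an F-curve has $T_X|_F\cong\calO(2)\oplus\calO(-1)^{a}\oplus\calO^{b}$, so it has no summand of degree $\le-2$. Non-convexity is witnessed instead by a double cover of such a curve, along which $\calO(-2)$ appears. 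The paper itself only asserts (a) without argument.
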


 Forms of
$\overline M_{0,n}$ are classified by degree $n$ \'etale $k$-algebras, and
Florence--Reichstein \cite{FR1} proved that all of them are unirational,
that all are rational when $n$ is odd, and that for $n$ even some are not
even retract rational; Duncan--Reichstein \cite{DR15} derived the
unirationality, and the density of rational points on every form, from
versality; Hassett--Tschinkel--Zhang \cite{HTZ} isolated structured even
cases where rationality survives; and Florence--Hoffmann--Reichstein
\cite{FHR} proved stable rationality of all forms of $\overline M_{g,n}$ for
an explicit finite list of $(g,n)$.  These results are collected in
\S\ref{sec:prelim}.  Theorems \ref{thmA}--\ref{thmD} are the corresponding
statements on spaces of maps, and the parity that governs them
is the same parity.

The proofs of Theorem \ref{thmA} rest on explicit $k$-models rather than on
the classical $\kbar$-theory.  The rigidification carrying three marked
points to $0,1,\infty$ uses only that these are $k$-rational, and a morphism
$\PP^1\to\PP^1$ of degree $d$ is a coprime pair of binary forms up to scale;
together these present the interior of the Kontsevich space as an open
subvariety of a product of projective spaces, over any field.  Convexity of
$\PP^r$ and of $(\PP^1)^n$, which we verify directly, makes the boundary a
Cartier divisor on a smooth stack, hence forces every component to meet that
interior --- which is what turns a birational model into a statement about
the whole space.  Theorem \ref{thmB} is an application of the functoriality
of the Hochschild--Serre boundary map together with Amitsur's theorem.
Theorem \ref{thmC} is Springer's theorem \cite{Spr} applied to the domain conic: that
conic carries divisors of degrees $n$ and $d$, so $\gcd(n,d)$ annihilates its
class in $\Br(k)[2]$, and an odd gcd therefore splits it.  Theorem \ref{thmD} rests on two properties of
the M\"obius curves: the general M\"obius map is free, which forces the
component containing them to be generically smooth of the expected dimension,
and the construction is $S_n$-equivariant throughout, which is what lets that
component descend to every twisted form.

\subsection*{Overview}

The paper falls into two halves.  The first, \S\ref{sec:prelim}--\S\ref{sec:kontsevich},
is preparatory and constructive.  \S\ref{sec:prelim} recalls Kontsevich spaces,
Galois descent and the known rationality results for forms of
$\overline M_{0,n}$ and $\overline M_{g,n}$ that motivate everything here.
\S\ref{sec:motivating}--\S\ref{sec:cor} then develop, before any of the general
theory, a completely explicit construction for $X=(\PP^1)^n$: the space
$\calR\cong(\PP^3)^n$ of $n$-tuples of degree one maps, its $S_n$-twists, which
are Weil restrictions of $\PP^3$ and hence $k$-rational, the universal family
over it, and the variant in which the source $\PP^1$ is replaced by a conic
$D_\alpha$.  Along the way this recovers $k$-unirationality of every twist of
$\overline M_{0,n}$ \cite{DR15,FR1} and the density of its $k$-points
\cite[Thm.~1.1(b)]{DR15}; neither is new, and we say so where they occur. \S\ref{sec:kontsevich} identifies all
of these constructions as loci in Kontsevich spaces, and is the hinge of the
paper: from there the targets are general.

The second half proves the four theorems, one section each and in the order in
which they are stated above:
\S\ref{sec:Pr} proves Theorem \ref{thmA},
\S\ref{ssec:twistedPr} proves Theorem \ref{thmB},
\S\ref{sec:proofC} proves Theorem \ref{thmC}, and
\S\ref{sec:proofD} proves Theorem \ref{thmD}.  Each of these four sections ends
with a short paragraph assembling its constituent results into the statement
announced in this introduction.

\subsection*{Acknowledgement}

We thank J.-L.~Colliot-Th\'el\`ene for sharing the manuscript \cite{FR1} in
December 2024, and which has inspired the present paper.  We thank the
Max-Planck-Institut f\"ur Mathematik, Bonn, for their hospitality and
support in 2025--2027, when this work was completed.

\subsection*{Glossary of notation}

{\small
Symbols are listed in order of first appearance, with the section that
introduces them.

\begin{longtable}{@{}>{\raggedright\arraybackslash}p{0.30\textwidth}@{\hspace{1em}}>{\raggedright\arraybackslash}p{0.55\textwidth}@{\hfill}r@{}}
$k,\ \kbar,\ \Gammak$ & ground field (infinite, $\mathrm{char}\ne2$ unless said otherwise), a separable closure, and $\mathrm{Gal}(\kbar/k)$. & \S\ref{sec:prelim} \\
$\overline M_{0,m}(X,\beta)$ & Kontsevich stack of $m$-pointed genus zero stable maps to $X$ of class $\beta$; $M_{0,m}(X,\beta)$ its interior (smooth irreducible domain); $\mathrm{expdim}$ as in \eqref{eq:expdim}. & \S\ref{sec:prelim} \\
$T_\rho$ & degree $n$ \'etale $k$-algebra attached to $\rho\in H^1(k,S_n)$; ${}^\rho Y$ or $Y_\rho$ the twist of $Y$ by $\rho$. & \S\ref{ssec:forms} \\
$S,\ n$ & index set $\{1,\dots,n\}$ and its size, $n\ge5$. & \S\ref{ssec:stability} \\
$X=(\PP^1)^S$ & the target $(\PP^1)^n$ with $\calO(1,\dots,1)$ and the diagonal $PGL_2$-action. & \S\ref{ssec:stability} \\
$X^{ss},\ X^{s}$ & semistable and stable loci (Lemma \ref{lem:stability}); $X^s=X^{ss}$ iff $n$ is odd. & \S\ref{ssec:stability} \\
$M_{0,n}^{\mathrm{GIT}}$ & the GIT quotient $X^{ss}/\!\!/PGL_2$, birational to $\overline M_{0,n}$ but not equal to it. & \S\ref{ssec:stability} \\
$N$ & the normalizer $N_{\Aut(X)}(PGL_2^\Delta)\cong PGL_2\times S_n$. & \S\ref{sec:univ-autgroup} \\
$\calR\cong(\PP^3)^n$ & compactified space of $S$-tuples of $2\times2$ matrices up to scale; $\calR^\circ\cong PGL_2^S$ its locus of genuine M\"obius maps. & \S\ref{sec:universal} \\
$\calC,\ \Phi,\ \Psi,\ ev_{t_0}$ & the universal family $\PP^1\times\calR$, its evaluation to $X$, its composite to $M_{0,n}^{\mathrm{GIT}}$, and evaluation at a fixed $t_0$. & \S\ref{sec:universal} \\
$\calR^{\mathrm{st}}$ & locus of $v$ whose whole $\PP^1$-image is semistable; \emph{not} contained in $\calR^\circ$, dense by Lemma \ref{lem:Rst-dense}. & \S\ref{sec:universal} \\
$\calR^{ss},\ Q\calR$ & locus meeting $X^{ss}$ generically, and the quotient $\calR^{ss}/\!\!/PGL_2$ of dimension $3n-3$. & \S\ref{sec:unirationality} \\
$\gamma_v$ & the M\"obius curve $\PP^1_k\to{}^\rho\overline M_{0,n}$ attached to $v\in\calR^{\mathrm{st}}_\rho(k)$. & \S\ref{sec:curves} \\
$\widehat X,\ \hat S,\ \Gamma_f$ & $\PP^1_0\times X=(\PP^1)^{n+1}$, its index set $\{0\}\sqcup S$, and the graph of $f\in\calR^\circ$. & \S\ref{sec:cor} \\
$\alpha,\ Q_\alpha,\ D_\alpha$ & a class in $H^1(k,PGL_2)$, its quaternion algebra, and the conic $\SB(Q_\alpha)$; $X^\alpha_\rho=\Res_{T_\rho/k}(D_\alpha)_{T_\rho}$. & \S\ref{sec:cor} \\
$\mathrm{Cor}_R^{(d_0,\mathbf d)}$ & curves in $\widehat X$ of multidegree $(d_0,\mathbf d)$; $\calR^\circ=\mathrm{Cor}_R^{(1,1)}$. & \S\ref{sec:multideg} \\
$E$-marking, $\mathrm{Emb}_E$ & a closed subscheme $\Spec E\hookrightarrow C^{\mathrm{sm}}$ of the smooth locus of the domain $C$, finite \'etale of degree $m$; the space of such in $\PP^1$, open in $\Res_{E/k}(\PP^1_E)$. & \S\ref{sec:kontsevich} \\
$\mathbf d,\ \Mor_d$ & a multidegree $(d_1,\dots,d_n)$, and the space of degree $d$ morphisms $\PP^1\to\PP^1$, open in $\PP^{2d+1}$. & \S\ref{sec:Pr} \\
$\gamma'$ & the M\"obius class $\pi_*\mathbf1$ on $M_{0,n}^{\mathrm{GIT}}$; \emph{not} an F-curve class. & \S\ref{sec:kontsevich} \\
$\delta_I,\ B_j,\ F$ & boundary divisors of $\overline M_{0,n}$, their sums over $|I|=j$, and $F$-curves. & \S\ref{sec:moebius-component} \\
$\beta,\ \mu_g$ & the M\"obius class on $\overline M_{0,n}$ and the M\"obius map attached to $g\in\calR^\circ$. & \S\ref{ssec:moebius} \\
$Q\calR^{\mathrm{un}}$ & $Q\calR/\PGL_2^{\mathrm{pre}}$, the unparametrized M\"obius locus, of dimension $3n-6$; \emph{not} $Q\calR$. & \S\ref{ssec:moebius} \\
$E,\ X_E,\ R_E,\ \mathcal M^{\mathrm{M\ddot ob}}_E$ & a degree $n$ \'etale algebra, the twisted form $\overline M_{0,E}$, the group $\Res_{E/k}(\PGL_{2,E})$, and the descended M\"obius component. & \S\ref{sec:moebius-component} \\
$A,\ \SB(A),\ \ind,\ \per$ & a central simple algebra, its Severi--Brauer variety, index and period. & \S\ref{ssec:twistedPr} \\
$M_{0,n|d}$ & moduli of $n$ ordered points plus one degree $d$ divisor on $\PP^1$, i.e.\ $\big((\PP^1)^n\times\PP^d\big)/\PGL_2$. & \S\ref{sec:Pr} \\
\end{longtable}
}

\section{Preliminaries}\label{sec:prelim}

We fix notation for Kontsevich spaces and for Galois descent, and recall the
results on forms of $\overline M_{0,n}$ and $\overline M_{g,n}$ that motivate
everything that follows.  

\subsection{The Kontsevich space}

For a projective variety $X$ over a field $k$ and a curve class $\gamma$ (a numerical class of $1$-cycles, $\gamma\in N_1(X)$; for $X$ with $\Pic(X_{\kbar})$ free with no monodromy obstruction, as in all our examples, this is literally $H_2(X,\mathbb Z)$ when $k=\mathbb C$), the Kontsevich moduli space
\[
\overline M_{0,r}(X,\gamma)
\]
parametrizes isomorphism classes of $r$-pointed stable maps $[\mu:C\to X,p_1,\dots,p_r]$: $C$ a connected, reduced, at-worst-nodal curve of arithmetic genus $0$, the $p_i$ distinct points of the smooth locus, $\mu_*[C]=\gamma$, and $(C,p_1,\dots,p_r,\mu)$ has only finitely many automorphisms. This is a proper Deligne--Mumford stack, constructed functorially over any base (Kontsevich \cite{Ko}; see Fulton--Pandharipande \cite{FP} for the general construction and basic properties, including the forgetful maps $\overline M_{0,r+1}(X,\gamma)\to\overline M_{0,r}(X,\gamma)$ exhibiting the source as the universal curve, and the evaluation maps $ev_i:\overline M_{0,r}(X,\gamma)\to X$, $ev_i[\mu,p_\bullet]=\mu(p_i)$).

\subsection{Stack vs coarse space}
\label{ssec:fineness}

$\overline M_{0,r}(X,\gamma)$ is in general only a proper Deligne--Mumford
stack, not a fine moduli space: a stable map $\mu:C\to X$ can have nontrivial
automorphisms, and then no universal family exists near $[\mu]$.  
Each $ev_i$ is a morphism of stacks
to the scheme $X$, and any morphism from a Deligne--Mumford stack to a scheme
factors uniquely through the coarse moduli space, by its universal property.
So $ev_i$ is defined on the coarse space.
 On the automorphism-free locus, the functor is
representable and the coarse space is a fine moduli space carrying a
universal family \cite[Thm.~1.10(2)]{Cos}, so there $k$-points are stable
maps over $k$.

Two conventions.  First, every statement calling
$\overline M_{0,r}(X,\gamma)$ a \emph{variety} --- in particular every
rationality assertion --- refers to its coarse moduli space.  Second, we
write
\[
  M_{0,r}(X,\gamma)\ \subset\ \overline M_{0,r}(X,\gamma)
\]
for the open \emph{interior}, the locus of stable maps with smooth
irreducible domain.

\subsection{Expected dimension}

For a smooth projective $X$ and $\beta\in H_2(X)$, the Kontsevich space
$\overline M_{0,m}(X,\beta)$ of $m$-pointed genus zero stable maps is a proper
Deligne--Mumford stack, and \emph{every} irreducible component has
dimension at least the expected one
\begin{equation}\label{eq:expdim}
	\mathrm{expdim}\,\overline M_{0,m}(X,\beta)
	\;=\;\dim X+(-K_X\cdot\beta)+m-3 ,
\end{equation}
with equality, and smoothness, at any stable map $f$ with
$H^1(C,f^*T_X)=0$ --- in particular at any \emph{free} map
\cite[II.1--II.3]{Kol}.   If $X$ is convex, i.e.\ $H^1(C,f^*T_X)=0$
for every genus zero stable map, then $\overline M_{0,m}(X,\beta)$ is a smooth
stack of the expected dimension, and it is irreducible when $X$ is
homogeneous \cite{FP,KP}.

\begin{theorem}[Kim--Pandharipande \protect{\cite[Thm.~1, Cor.~1]{KP}}]\label{thm:KP}
Let $X=G/P$ be a homogeneous space for a connected semisimple algebraic group $G$ (over $\mathbb C$) and a parabolic $P$. Then $\overline M_{g,r}(X,\gamma)$ is connected for every $g,r,\gamma$; for $g=0$ it has only quotient singularities, so connectedness is equivalent to irreducibility, and $\overline M_{0,r}(X,\gamma)$ is irreducible. Moreover the coarse moduli space $\overline M_{0,r}(X,\gamma)$ is a \emph{rational} variety.
\end{theorem}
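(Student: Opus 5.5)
The plan is to follow the strategy of Kim--Pandharipande, establishing the three assertions in the order connectedness, irreducibility in genus zero, and rationality.

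\textbf{Connectedness.} We use the torus action. Fix a maximal torus $T\subset G$ and a generic one-parameter subgroup $\lambda:\mathbb G_m\to T$. The action of $G$ on $X$ induces an action on $\overline M_{g,r}(X,\gamma)$ by composition. Since the stack is proper, every point flows under $\lambda(t)$, as $t\to0$, to a $\lambda$-fixed stable map. The fixed maps have a combinatorial description: their images are unions of $T$-invariant curves joining $T$-fixed points of $X$, which are finite in number (Weyl group cosets). Next we connect any two fixed loci through chains of $\mathbb P^1$'s. Each point lies in the closure of the $G$-orbit through it, because $G$ is connected. One then degenerates an arbitrary stable map, via $G$-translates and $\lambda$-limits, to a standard comb: a contracted genus $g$ component carrying the markings, with rational tails mapping with prescribed degrees to a fixed chain of $T$-invariant lines. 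The different splittings of $\gamma$ into effective classes of lines are related by smoothing nodes inside $\overline M_{0,*}(X,\cdot)$. Here we use that the cone of curves of $G/P$ is generated by Schubert lines and that the space of lines through a point is connected.

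\textbf{Irreducibility for $g=0$.} Since $G$ is homogeneous, $T_X$ is globally generated. Hence $H^1(C,f^*T_X)=0$ for every genus zero stable map, so $X$ is convex. By \eqref{eq:expdim} and the discussion after it, $\overline M_{0,r}(X,\gamma)$ is then a smooth DM stack. Its coarse space therefore has only finite quotient singularities, and in particular is normal. A connected normal variety is irreducible, which proves the second claim.

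\textbf{Rationality.} This is the main obstacle. We first reduce to $r\le 2$ or $r=0$ using the forgetful maps. Since $\overline M_{0,r+1}\to\overline M_{0,r}$ is generically a $\mathbb P^1$-bundle over the automorphism-free locus, with a rational section once $r\ge1$, it suffices to treat $r=0$ and the low cases.

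For $r=0$ we use the Borel--Weil embedding. Choose a $G$-equivariant embedding of $X$ in a projective space, or more generally work with Plücker-type coordinates. A general degree $\gamma$ rational curve is then a $G$-translate of a curve lying in an open cell $U\cong\mathbb A^N$ (the big Bruhat cell), after removing one point. Such maps are given by tuples of polynomials modulo $\PGL_2$ reparametrization. Concretely, we fix the preimage of the divisor $X\setminus U$, which is a divisor of some degree $e$ on $\mathbb P^1$. The space of maps is then birational to
\[
\big(\PP(\Sym^e)\times\mathbb A^{M}\big)/\PGL_2 ,
\]
where $\PGL_2$ acts linearly on the affine factor and freely generically.

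By the no-name lemma, this quotient is birational to $\PP(\Sym^e)/\PGL_2\times\mathbb A^{M}$. The factor $\PP(\Sym^e)/\PGL_2$ is rational over $\mathbb C$ by Katsylo and Bogomolov, as cited in the introduction. The hardest step is ensuring genericity: we must check that the cell parametrization is dominant onto the unique component, which follows from irreducibility, and that the $\PGL_2$-action on the fibre is generically free.
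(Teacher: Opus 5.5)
The paper does not prove this statement. It cites \cite{KP} and only summarizes their route (torus degeneration, a Bia{\l}ynicki-Birula stratification of the fixed loci, then Katsylo--Bogomolov), so I compare your proposal with that. Your irreducibility step is correct: global generation of $T_X$ gives convexity, hence a smooth stack with normal coarse space, and connected plus normal gives irreducible. The genuine gap is in the rationality step. Your model $(\PP(\Sym^e)\times\Aff^M)/\PGL_2$ with linear fibres is valid for $X=\PP^r$, where it is exactly the splitting $V=A\oplus B$ of Theorem~\ref{thm:noname}: the big cell is the complement of one hyperplane, and once $f_0$ (hence the polar divisor) is fixed, $f_1,\dots,f_r$ are arbitrary sections. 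For a general $G/P$ it breaks in two ways.

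\emph{Several polar divisors.} $X\smallsetminus U$ is the union of the $\ell=\mathrm{rk}\,\Pic(X)$ Schubert divisors $D_i$. So the polar datum lives in $\prod_i\PP(\Sym^{e_i})$ with $e_i=\gamma\cdot D_i$, and you would need Katsylo's theorem for reducible representations \cite{Kat84}.

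\emph{Nonlinear fibres.} This is the decisive problem: maps with prescribed polar divisors are not an open subset of a vector space. The class of $\mu$ is governed by the poles of all Pl\"ucker-type coordinates, which are nonlinear polynomials in the big-cell coordinates. Fixing the polar divisors therefore imposes nonlinear pole-cancellation conditions. Take the quadric $\{x_0x_{m+1}=q(x_1,\dots,x_m)\}$, $m\ge3$, with $U=\{x_0\ne0\}$. Fixing $\mathrm{div}(f_0)=\{p_1,\dots,p_d\}$ forces $q(f_1(p_j),\dots,f_m(p_j))=0$ for every $j$. The fibre is then $\prod_j\{q=0\}\times\Aff^m$, with the factors permuted by monodromy as the divisor moves. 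This is not a $\PGL_2$-equivariant vector bundle over $\PP(\Sym^d)$, so Lemma~\ref{lem:noname} does not apply.

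Your sentence that a general curve meets $X\smallsetminus gU$ in one point is also wrong: it meets that ample divisor in $\sum_ie_i$ points. A single-chart argument of this kind covers $\PP^r$ and products of projective spaces, not general $G/P$; that is what the inductive torus/Bia{\l}ynicki-Birula argument is for.

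There are three smaller gaps.

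\textbf{(1) Generic freeness on the base.} Lemma~\ref{lem:noname} needs a torsor over a dense open of the \emph{base}, i.e.\ $\PGL_2$ acting generically freely on $\PP(\Sym^e)$ itself, not merely on the total space. Otherwise, taking the base to be a point, every faithful linear quotient $V/G$ of a finite group would be rational, contradicting Saltman. The hypothesis fails for $e\le4$: the generic stabilisers are a Borel, $N(T)$, $S_3$ and $V_4$. So low-degree classes need separate treatment.

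\textbf{(2) The case $r=1$.} Your reduction on markings rightly leaves $r=1$ to be done on its own, but you never do it. It cannot come from $r=0$, because the universal curve over $\overline M_{0,0}(X,\gamma)$ can be a nonsplit conic bundle. For $X=\PP^r$ and even $d\ge6$, its generic fibre is the base change, along a purely transcendental extension, of the generic $\PGL_2$-torsor of binary forms of degree $d$. That torsor is nontrivial, because its pullback to $\Sym^d/\PGL_2$ is the versal torsor of a generically free linear representation; cf.\ Corollary~\ref{cor:parity}(iii). You need a separate model with one marking, as in Corollary~\ref{cor:allcases}(ii).

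\textbf{(3) Connectedness.} Your argument only establishes the easy step: properness plus the $\lambda$-flow puts a torus-fixed map in every connected component. The substance of \cite{KP}, that all fixed loci lie in one component, is simply asserted. Moreover, in genus $g$ the fixed loci contain maps whose genus sits in cycles of non-contracted covers of invariant curves, e.g.\ a triangle of coordinate lines in $\PP^2$ in genus $1$. Your degeneration to a comb with a contracted genus-$g$ component does not address these.
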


Their method degenerates via a maximal torus of $G$ and a Bia{\l}ynicki-Birula stratification of the resulting fixed loci; the rationality statement is deduced from this stratification together with rationality results of Katsylo and Bogomolov for quotients of $\SL_2$-representations.
 
\subsection{k-Forms}\label{ssec:forms}

Let $Y$ be a $k$-variety.  A $k$-form of $Y$ is a $k$-variety $Y'$ with
$Y'_{\kbar}\cong Y_{\kbar}$; the forms are classified by
$H^1(\Gammak,\Aut(Y_{\kbar}))$. When $X=\overline {M}_{0,n}$, have automorphism groups 
$\Aut(\overline M_{0,n})=S_n$ for $n\ge5$ by Bruno--Mella \cite{BM}.  So the $k$-forms are classified by
$H^1(k,S_n)$, i.e.\ by degree $n$ \'etale $k$-algebras $T_\rho$, and
twisting only relabels the marked points.  Second, this same $S_n$ acts on
the ambient constructions --- on $X=(\PP^1)^n$ by permuting factors, 
and induced action on Kontsevich spaces, see  \ref{sec:kontsevich}.

For twisted \emph{targets} rather than twisted markings, the relevant
classifying set is $H^1(k,PGL_r)$, giving Severi--Brauer varieties; we use
this from \S\ref{sec:cor} onward, and the two twists are independent.

\subsection{Rationality of forms of $\overline M_{0,n}$}\label{ssec:FRprelim}
Note that $X=\overline M_{0,n}$ is neither homogeneous nor
convex, and it is known to be Fano only for $n\le 5$. 
The rationality of forms of $\overline M_{0,n}$ is given by the following.

\begin{theorem}[Florence--Reichstein]\cite{FR1}\label{thm:FR}
Let $k$ be an infinite field of characteristic $\ne2$ and $n\ge5$.
\begin{enumerate}[label=(\alph*)]
\item Every $k$-form of $\overline M_{0,n}$ is unirational.
\item If $n$ is \textbf{odd}, every $k$-form of $\overline M_{0,n}$ is rational.
\item If $n$ is \textbf{even}, there exist field extensions $E/k$ and $E$-forms of $\overline M_{0,n}$ that are not even retract rational over $E$.
\end{enumerate}
\end{theorem}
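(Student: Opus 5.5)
Since Theorem \ref{thm:FR} is quoted from \cite{FR1}, the plan is to reconstruct the Florence--Reichstein argument from Galois descent and the GIT presentation of $\overline M_{0,n}$. Throughout, a $k$-form is written $X_E$, where $E$ is the degree $n$ \'etale algebra attached to $\rho\in H^1(k,S_n)$, as in \S\ref{ssec:forms}. The first step is a birational model. Work in $X=(\PP^1)^n$ with the diagonal $\PGL_2$-action. $\overline M_{0,n}$ is birational to $(\PP^1)^n/\!\!/\PGL_2$. The permutation action of $S_n$ commutes with the diagonal action, so twisting by $\rho$ gives that $X_E$ is birational to $\Res_{E/k}(\PP^1_E)/\!\!/\PGL_2$. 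Note that $\Res_{E/k}(\PP^1_E)$ is a $k$-rational variety: it contains $\Res_{E/k}(\Aff^1_E)\cong\Aff^n_k$ as a dense open.

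For (a), the quotient map from the stable locus of $\Res_{E/k}(\PP^1_E)$ to $X_E$ is dominant and defined over $k$. Its source is $k$-rational, so $X_E$ is $k$-unirational. This uses that $k$ is infinite, so that rational points of the source are dense and land in the stable locus.

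For (b), take $n$ odd. Then $X^s=X^{ss}$, and the stabilizers in $\PGL_2$ of stable configurations are generically trivial. So the generic fiber of $\Res_{E/k}(\PP^1_E)^s\to X_E$ is a $\PGL_2$-torsor over the function field $K=k(X_E)$, with class $\alpha\in H^1(K,\PGL_2)$, that is, a conic $D_\alpha$ over $K$. The key point is that $D_\alpha$ carries the tautological degree $n$ divisor, namely the $n$ marked points assembled into a closed subscheme $\Spec(E\otimes K)\hookrightarrow D_\alpha$. Since $n$ is odd, $D_\alpha$ has a point over an odd degree extension, so by Springer's theorem \cite{Spr} it has a $K$-point and $\alpha=0$. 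The torsor is therefore generically trivial, and
\[
k\big(\Res_{E/k}\PP^1_E\big)\;\cong\;K(\PGL_2)\;\cong\;K(t_1,t_2,t_3).
\]
This alone gives only stable rationality. To get rationality, I would use a $\PGL_2$-equivariant rational section instead. Split $\rho$ via the divisor: on the trivialized $D_\alpha\cong\PP^1_K$, an odd degree $E$-point plus the structure of $\Sym^n$ lets one build a canonical $K$-rational triple. A more robust route is to pass to $M_{0,n|}$-type slices: write $X_E$ as $(\Res_{E/k}\PP^1)/\PGL_2$ and use the no-name lemma to compare it with $(\Res_{E/k}\PP^1\times\PP(V))/\PGL_2$ for a suitable odd-dimensional representation $V$. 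One then cuts down by a linear slice on which $\PGL_2$ acts simply transitively, for instance the normalized forms where the first three coefficients of an odd degree binary form are fixed. I would try this slice method first, since odd $n$ ensures the binary form $\prod(x-e_iy)$ over $E$ yields an $\mathrm{SL}_2$-linearized odd degree representation on which $\PGL_2$ lifts to $\GL_2$.

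For (c), take $n$ even. I would take $k_0$ a base field, $E$ generic, and the generic form. The obstruction should be the class $\alpha\in\Br(K)[2]$ of the generic conic. When $n$ is even, nothing forces $\alpha=0$, and one shows $\alpha\neq0$ via a residue computation along a boundary divisor of $\Res_{E/k}\PP^1$. Retract rationality fails because an unramified invariant survives: a suitable cohomological invariant of $\alpha$, combined with the discriminant of $E$, gives a nonzero unramified class in $H^2_{nr}$ or $H^3_{nr}$. Retract rational varieties have trivial unramified cohomology, which rules them out.

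\textbf{Main obstacle.} The hard part is (c). Showing that the relevant invariant is genuinely unramified over all divisorial valuations, and nonzero, requires delicate residue bookkeeping over the boundary of $\overline M_{0,n}$. The upgrade from stable rationality to rationality in (b) is the secondary difficulty.
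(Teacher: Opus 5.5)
The paper does not prove this theorem. It quotes it from \cite{FR1}, and only part (a) is re-derived later, in Theorems~\ref{thm:unirational} and \ref{thm:twisted}(iv). There $X_E$ is dominated by $\Res_{E/k}\PP^3_E$, resp.\ $R_E\times\PP^1$, through evaluation of M\"obius tuples.

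Your argument for (a) is correct and more direct. Those evaluation maps factor through the dominant map $\Res_{E/k}(\PP^1_E)\dashrightarrow X_E$, and that map already suffices. Unirationality itself needs no density of $k$-points.

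\emph{The gap in (b).} The odd-degree marking and Springer's theorem do show that the generic $\PGL_2$-torsor over $K=k(X_E)$ is trivial. Hence $\Res_{E/k}(\PP^1_E)\sim_k X_E\times\PGL_2$, so $X_E\times\Aff^3$ is $k$-rational. As you say, this is only stable rationality, and neither proposed upgrade removes the $\Aff^3$.
\begin{itemize}
\item The no-name lemma applied to $\Res_{E/k}(\PP^1_E)\times V$ can only produce $X_E\times\Aff^{\dim V}$. A slice for $\PGL_2$ on $\PP(V)$ then proves that $(\Res_{E/k}\PP^1_E\times\PP(V))/\PGL_2\sim_k X_E\times\PP^{\dim V-1}$ is rational. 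That is again a product, and rationality does not cancel.
\item A canonical $K$-rational triple on the generic conic, even if you had one, amounts to a rational section $X_E\dashrightarrow\Res_{E/k}(\PP^1_E)$. Its image is birational to $X_E$, so proving that image rational is exactly the original problem. In the split case the image is the coordinate slice $\{y_1=0,\,y_2=1,\,y_3=\infty\}\cong(\PP^1)^{n-3}$. For twisted $E$ it is cut out by non-linear conditions.
\end{itemize}
What is missing is an argument giving a birational map from $X_E$ itself to $\PP^{n-3}_k$. The splitting of the generic conic is only the starting point for that.

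\emph{The gap in (c).} There is no argument here yet, and the proposed obstruction fails as stated.
\begin{itemize}
\item The class $\alpha$ vanishes on $k(\Res_{E/k}\PP^1_E)$, because the torsor acquires its tautological section there. So residues along divisors of $\Res_{E/k}\PP^1_E$ cannot detect $\alpha$.
\item On $X_E$, the universal curve degenerates over $\delta_I$ into two components carrying the markings $I$ and $I^c$. Suppose $|I|=n/2$ and the Galois image in $S_n$ contains a permutation exchanging $I$ and $I^c$. This is the case for the generic form you propose, whose image is all of $S_n$. Then the absolute Galois group of the residue field of that boundary divisor swaps the two components of the degenerate fibre. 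So $\alpha$ has nonzero residue there.
\item A nonzero residue shows $\alpha\neq0$, but it also shows $\alpha$ is ramified. A ramified class is useless as an obstruction to retract rationality.
\end{itemize}
For the generic form you therefore need a genuinely different unramified invariant. The proposal neither constructs the asserted class in $H^2_{\mathrm{nr}}$ or $H^3_{\mathrm{nr}}$ nor checks that it is unramified and nonzero.
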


 Hassett--Tschinkel--Zhang \cite{HTZ} isolate structured subgroups $G\subset S_n$ (for $n=2m$, $G=A\times S_2\subset A\wr S_2\subset S_{2m}$ for a subgroup $A\subset S_m$) for which rationality holds even though $n$ is even, strengthening Theorem \ref{thm:FR}(c) by showing that the failure of rationality is confined to twists that are not of this structured type.

For $g\ge1$ the corresponding problem is settled, for an explicit finite
list of pairs $(g,n)$, by Florence, Hoffmann and Reichstein \cite{FHR}, who
prove that every twisted form of $\overline M_{g,n}$ is stably rational
there.

\section{The motivating construction: $X=(\PP^1)^n$}\label{sec:motivating}

\subsection{The variety, and its stability criterion}\label{ssec:stability}

Fix $n\ge5$, a field $k$ with $\mathrm{char}\,k\ne2$, and the index set
$S=\{1,\dots,n\}$.  Put
\[
  X:=(\PP^1)^{S}\cong(\PP^1)^n ,
\]
linearized by the symmetric ample line bundle
$\calO(1,\dots,1)$, and let $PGL_2$ act diagonally.  We recall the
numerical criterion, since the distinction between the two stability loci
is used repeatedly below and is what the parity of $n$ turns on.

\begin{lemma}[Hilbert--Mumford, {\cite[Ch.~4]{Mumford}}; see also \cite{DO}]\label{lem:stability}
Let $x=(x_1,\dots,x_n)\in X(\kbar)$.  With respect to
$\calO(1,\dots,1)$ and the diagonal $PGL_2$-action:
\[
  x\ \text{is semistable}\iff \#\{i:x_i=p\}\le\tfrac n2\quad\text{for every }p\in\PP^1(\kbar),
\]
\[
  x\ \text{is stable}\iff \#\{i:x_i=p\}<\tfrac n2\quad\text{for every }p\in\PP^1(\kbar).
\]
\end{lemma}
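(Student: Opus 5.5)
We apply the Hilbert--Mumford numerical criterion to the diagonal action on $X=(\PP^1)^n$. Semistability and stability can be checked after base change to $\kbar$, and the criterion only needs to be tested on one-parameter subgroups $\lambda:\mathbb G_m\to\SL_2$. Indeed, the $PGL_2$-linearization of $\calO(1,\dots,1)$ exists because $n$ copies of $\calO(1)$ give an $\SL_2$-linearization whose centre $\mu_2$ acts by $(-1)^n$; if $n$ is odd we pass to $\calO(2,\dots,2)$, which does not change either stability locus. Every nontrivial $\lambda$ is conjugate under $\SL_2(\kbar)$ to $t\mapsto\mathrm{diag}(t^a,t^{-a})$ with $a>0$. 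Conjugating $\lambda$ by $g$ amounts to replacing $x$ by $g^{-1}x$. So it suffices to compute the Mumford weight $\mu(x,\lambda)$ for diagonal $\lambda$ and arbitrary $x$, and then let the fixed points of $\lambda$ range over all pairs of distinct points of $\PP^1(\kbar)$.

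The weight is additive over the factors: $\mu(x,\lambda)=\sum_i\mu(x_i,\lambda)$, computed for $\calO(1)$ on each factor. For $\lambda=\mathrm{diag}(t,t^{-1})$ acting on $\PP^1$ with fixed points $0=[0:1]$ and $\infty=[1:0]$, we have $\lim_{t\to0}\lambda(t)x_i=0$ for $x_i\neq\infty$, and the limit is $\infty$ for $x_i=\infty$. Reading off the weights on the fibre of $\calO(-1)$ at the limit point gives $\mu(x_i,\lambda)=-1$ when $x_i\ne\infty$ and $+1$ when $x_i=\infty$; we fix the sign convention of \cite[Ch.~4]{Mumford} in which stable means $\mu>0$ for all $\lambda$. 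Therefore
\[
\mu(x,\lambda)=\#\{i:x_i=\infty\}-\#\{i:x_i\ne\infty\}=2\,\#\{i:x_i=\infty\}-n
\]
up to an overall sign, which is fixed by checking that the repelling point is the one that matters. Replacing $\lambda$ by $\lambda^{-1}$ exchanges the roles of $0$ and $\infty$.

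Now apply the criterion. $x$ is semistable if and only if $\mu(x,\lambda)\le0$ for all $\lambda$ in the convention $\mu\ge0\Leftrightarrow$ unstable direction, that is, if and only if $2\#\{i:x_i=p\}-n\le0$ for every point $p$ that occurs as a fixed point of some $\lambda$. Since every $p\in\PP^1(\kbar)$ is such a fixed point, this reads $\#\{i:x_i=p\}\le n/2$. Stability is the strict version of the same inequality. The finiteness of stabilizers that Mumford's definition of stability also requires is automatic here: if no $p$ carries $n/2$ or more coordinates, then at least three distinct values occur among the $x_i$ (as $n\ge5$), and the stabilizer in $PGL_2$ of three distinct points is trivial.

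The only real obstacle is bookkeeping of signs and conventions, namely which fixed point of $\lambda$ is attracting and the sign of the weight on $\calO(-1)$ versus $\calO(1)$. We would settle this by a sanity check: the configuration with all $x_i=p$ must be unstable. Everything else is the standard computation in \cite{DO}.
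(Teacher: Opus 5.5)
Your proposal is correct and is the standard Hilbert--Mumford computation that the paper does not reprove but cites from \cite[Ch.~4]{Mumford} and \cite{DO}; the one thing to tighten is the overall sign. Your all-equal sanity check cannot detect a sign error, because with the opposite sign every configuration (that one included) would be unstable. Fix the sign instead with the affine-cone form of the criterion: $\lambda(t)=\mathrm{diag}(t,t^{-1})$ drives a lift of $x$ in the Segre cone to $0$ as $t\to0$ exactly when its minimal weight $2\#\{i:x_i=\infty\}-n$ is positive.
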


Accordingly we write
\[
  X^{s}\ \subseteq\ X^{ss}\ \subseteq\ X
\]
for the stable and semistable loci; both are open and defined over $k$, the
conditions being $\Gammak$-stable.  Three consequences are used throughout.

\begin{enumerate}[label=(\roman*)]
\item \emph{Parity.}  $X^{s}=X^{ss}$ if and only if $n$ is odd, since only
then can no subset have cardinality exactly $n/2$.  For $n$ even the
strictly semistable locus $X^{ss}\smallsetminus X^{s}$ --- where exactly
$n/2$ of the points agree at each of two distinct values --- is nonempty,
$PGL_2$-orbits there are not closed, and the GIT quotient identifies them.
For $n$ even the correct object is accordingly the quotient stack
$[X^{ss}/PGL_2]$, together with Abramovich--Vistoli twisted stable maps
\cite{AV02}.
\item \emph{Freeness.}  On $X^{s}$ the diagonal action is free: a stable
configuration takes at least three distinct values, and an element of
$PGL_2$ fixing three points of $\PP^1$ is the identity.  Hence
$X^{s}\to X^{s}/PGL_2$ is a $PGL_2$-torsor.
\item \emph{Codimension.}  The locus where a prescribed set of $m$ of the
points coincide has codimension $m-1$, so the unstable locus
$X\smallsetminus X^{ss}$, which requires $m>n/2$, has codimension
$\lfloor n/2\rfloor\ge2$ for $n\ge5$.  In particular
$\Pic(X^{ss})=\Pic(X^{s})=\Pic(X)=\mathbb Z^n$.
\end{enumerate}

We write $M_{0,n}^{\mathrm{GIT}}:=X^{ss}/\!\!/PGL_2$ for the GIT quotient, a
standard compactification of the moduli of $n$ ordered points on $\PP^1$,
birational to the Deligne--Mumford compactification $\overline M_{0,n}$ (see
the Remark in \S\ref{sec:unirationality}).  Unless stated otherwise, every
quotient below is formed from $X^{ss}$.

\subsection{The moduli-compatible automorphism group}\label{sec:univ-autgroup}

As an abstract variety, $\Aut(X)\cong PGL_2^{S}\rtimes \Sym(S)$: independent M\"obius transformations on each factor, together with permutations of the factors. The subgroup relevant to Galois descent of the moduli problem $[X/PGL_2]$ is the normalizer of the diagonal.

\begin{lemma}\label{lem:normalizer}
Let $PGL_2^{\Delta}\subset PGL_2^{S}$ be the diagonal subgroup. Then
\[
N:=N_{\Aut(X)}(PGL_2^{\Delta})\;\cong\; PGL_2\times \Sym(S).
\]
\end{lemma}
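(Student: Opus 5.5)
We compute the normalizer directly inside $\Aut(X)\cong PGL_2^{S}\rtimes\Sym(S)$. Write a general element as $\phi=(g_1,\dots,g_n)\cdot\sigma$, where $g_i\in PGL_2$ acts on the $i$-th factor and $\sigma\in\Sym(S)$ permutes the factors. The proof has three steps.

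First, $\Sym(S)$ commutes with the diagonal. Permuting factors commutes with applying the same $h$ in every factor: for $h^{\Delta}=(h,\dots,h)$ we have $\sigma h^{\Delta}\sigma^{-1}=h^{\Delta}$. Hence $\Sym(S)\subset N$, and in fact $\Sym(S)$ lies in the centralizer. Also $PGL_2^{\Delta}\subset N$ trivially.

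Second, we determine which $g=(g_1,\dots,g_n)\in PGL_2^S$ normalize the diagonal. Since $\sigma\in N$, $\phi\in N$ if and only if $g\in N$. Now $g h^{\Delta} g^{-1}=(g_1hg_1^{-1},\dots,g_nhg_n^{-1})$, and this must be diagonal for every $h$. So $g_ihg_i^{-1}=g_jhg_j^{-1}$ for all $h$, which says $g_j^{-1}g_i$ centralizes $PGL_2$. Because $PGL_2$ has trivial center, as a group scheme over $\kbar$ or functorially on points, we get $g_i=g_j$. Thus $N\cap PGL_2^{S}=PGL_2^{\Delta}$, and therefore $N=PGL_2^{\Delta}\cdot\Sym(S)$. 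The one point to check carefully is that the centralizer of $PGL_2$ in itself is trivial as a scheme, not merely on $\kbar$-points. Since $\mathrm{char}\,k\ne2$ this is standard: the adjoint representation $PGL_2\hookrightarrow\GL(\mathfrak{sl}_2)$ is faithful with trivial center scheme. Alternatively, it suffices to work with $\kbar$-points, since we only use $N$ as an abstract group for $H^1$ computations.

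Third, we identify the product structure. We have $PGL_2^{\Delta}\cap\Sym(S)=1$, since a nontrivial permutation is not an element of $PGL_2^S$. By the first step the two subgroups commute elementwise. Hence the multiplication map $PGL_2\times\Sym(S)\to N$, $(h,\sigma)\mapsto h^{\Delta}\sigma$, is an isomorphism, and the semidirect product restricts to a direct product.

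There is a small subtlety: one should confirm that $\Aut(X)$ really is $PGL_2^S\rtimes\Sym(S)$. The paper states this as known; it follows from the fact that $\Aut$ must permute the $n$ extremal rays of the nef cone, that is, the projections. So the only real content is the centralizer computation in the second step.
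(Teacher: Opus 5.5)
Your proof is correct and follows the paper's argument essentially step for step: $\Sym(S)$ centralizes $PGL_2^{\Delta}$, and triviality of the center of $PGL_2$ forces any normalizing element of $PGL_2^{S}$ to be diagonal. The direct product then comes from two commuting subgroups with trivial intersection, and your extra remarks on the scheme-theoretic center and on $\Aut(X)$ are fine, though the hypothesis $\mathrm{char}\,k\ne2$ is not needed for the center of $PGL_2$ to be trivial, since $PGL_2$ is adjoint in every characteristic.
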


\begin{proof}
Write an element of $\Aut(X)$ as $(h,\sigma)$ with $h=(h_i)_{i\in S}\in PGL_2^S$ and $\sigma\in\Sym(S)$. Since $\Sym(S)$ already normalizes $PGL_2^\Delta$, it suffices to determine which $h:=(h_i)\in PGL_2^S$ conjugate $PGL_2^\Delta$ into itself. If $h(g,\dots,g)h^{-1}=(g',\dots,g')$ for every $g\in PGL_2$, then $h_igh_i^{-1}=h_jgh_j^{-1}$ for all $i,j$ and all $g$, so $h_j^{-1}h_i$ centralizes all of $PGL_2$. Since $PGL_2$ has trivial center, $h_j^{-1}h_i=1$, i.e.\ $h$ is itself diagonal. Hence $N=PGL_2^\Delta\cdot\Sym(S)$; and since permuting the factors commutes with acting by the \emph{same} $g$ on every factor, $\Sym(S)$ centralizes $PGL_2^\Delta$, so the product is direct: $N\cong PGL_2^\Delta\times\Sym(S)$.
\end{proof}

We write $N\cong PGL_2\times S_n$ from now on. This is the group whose Galois cohomology classifies twisted forms of the pair $(X,PGL_2^\Delta)$, and hence of the quotient stack $[X/PGL_2]$ and its coarse space $M_{0,n}^{\mathrm{GIT}}$.

\section{The M\"obius model: universal family, Galois twists, and $k$-rational
curves}\label{sec:model}

This section assembles the explicit $k$-model on which the rest of the paper
rests: the parameter space $\calR$ of $S$-tuples of degree one maps and its
universal family (\S\ref{sec:universal}), the twists of both by
$H^1(k,S_n)$ (\S\ref{sec:twists}), the rationality of the twisted parameter
space (\S\ref{sec:rationality-R}), the unirationality of the twisted forms of
$M_{0,n}^{\mathrm{GIT}}$ that follows (\S\ref{sec:unirationality}), and the
Zariski-dense family of $k$-rational curves that the universal family produces
on them (\S\ref{sec:curves}).

\subsection{M\"obius curves and their universal family}\label{sec:universal}

\begin{definition}\label{def:R}
Let
\[
\calR:=\PP(\Mat)^{S}\cong(\PP^3)^n,\qquad\dim\calR=3n,
\]
the compactified parameter space of $S$-tuples of $2\times2$ matrices $A=\left(\begin{smallmatrix}\alpha&\beta\\\gamma&\delta\end{smallmatrix}\right)$ up to scalars. Writing a matrix formally as the rule $t\mapsto\frac{\alpha t+\beta}{\gamma t+\delta}$: if $\det A_i\ne0$, this defines a degree-one morphism $\PP^1\to\PP^1$; if $\det A_i=0$, the two linear forms $\alpha t+\beta$, $\gamma t+\delta$ share a common root, and the rule fails to define a morphism there. Accordingly, set
\[
\calR^\circ:=\{v=(A_i)_{i\in S}\in\calR:\det A_i\ne0\text{ for all }i\}\ \cong\ PGL_2^{S},
\]
the open locus of $S$-tuples of degree-one maps; $\calR\setminus\calR^\circ$ compactifies this by degenerate matrices.
\end{definition}

The  group $N$ acts on $\calR$ by \emph{post-composition}: the diagonal $g\in PGL_2\subset N$ acts by simultaneous left matrix multiplication $A_i\mapsto G\cdot A_i$ on every factor (preserving $\calR^\circ$, since $\det(GA_i)=\det(G)\det(A_i)$), and $S_n\subset N$ permutes the factors.

\begin{construction}[Universal family]\label{con:universal}
Let $\calC:=\PP^1\times\calR\to\calR$ be the trivial family, with evaluation map
\[
\Phi:\calC\dashrightarrow X,\qquad (t,f)\mapsto f(t)=(f_i(t))_{i\in S},
\]
a rational map, regular exactly at those $(t,f)$ where every $A_i$ is nonsingular at $t$; in particular $\Phi$ is an everywhere-defined morphism on $\PP^1\times\calR^\circ$. Composing with the GIT quotient map $\pi:X^{ss}\to M_{0,n}^{\mathrm{GIT}}$ gives a rational map
\[
\Psi:\calC\dashrightarrow M_{0,n}^{\mathrm{GIT}},
\]
regular wherever $\Phi$ is defined and lands in $X^{ss}$. For $t_0\in\PP^1$ fixed, write $ev_{t_0}:\calR\dashrightarrow X$ for $f\mapsto f(t_0)$; this is the projectivization of the surjective linear map $\Mat\to k^2$, $A\mapsto A\cdot(t_0,1)^{\mathsf T}$, applied factor-wise: a dominant rational map for every $t_0$, regular away from the codimension-two locus (in each factor) where $A_i\cdot(t_0,1)^{\mathsf T}=0$, and in particular an everywhere-defined morphism on $\calR^\circ$.
\end{construction}

Since $\calC=\PP^1\times\calR$ is a trivial product, it tautologically admits sections over any field: for any $t_0\in\PP^1(k)$, $v\mapsto(t_0,v)$ is a $k$-section of $\calC\to\calR$. In particular, \emph{every} $k$-point of $\calR$ lying in the open locus
\[
  \calR^{\mathrm{st}}:=\{v\in\calR:\Phi(\cdot,v)\text{ is defined on all of }\PP^1\text{ and }\Phi(\PP^1\times\{v\})\subset X^{ss}\}
\]
gives, via $\Psi$, a $k$-rational curve $\gamma_v:\PP^1_k\to M_{0,n}^{\mathrm{GIT}}$.

In general, $\calR^{\mathrm{st}}\subsetneq\calR^\circ$ and the nonemptiness of $\calR^{\mathrm{st}}$ is provided by Lemma \ref{lem:Rst-dense} below. 

\subsection{Galois twists via $H^1(k,S_n)$}\label{sec:twists}

A continuous cocycle $\rho:\Gammak\to S_n=\Sym(S)\hookrightarrow N$ is equivalent to a degree-$n$ \'etale $k$-algebra $T_\rho$ (equivalently, a $0$-dimensional separable $k$-scheme with $T_\rho(\kbar)\cong S$ as a $\Gammak$-set) --- such cocycles arise naturally, for example, as the Galois action on the branch locus of a degree-$n$ cover of $\PP^1$.

\begin{lemma}\label{claim:X}
$X_\rho\cong\Res_{T_\rho/k}\PP^1_{T_\rho}$, and this twisted variety carries an \emph{untwisted} $k$-rational diagonal action of $PGL_2$.
\end{lemma}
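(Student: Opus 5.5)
The plan is to prove the identification $X_\rho\cong\Res_{T_\rho/k}\PP^1_{T_\rho}$ by Galois descent, comparing the two twisted Galois actions on $X_{\kbar}=(\PP^1_{\kbar})^S$.

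First, recall what the twist is. The cocycle $\rho:\Gammak\to\Sym(S)\subset\Aut(X_{\kbar})$ has values in $k$-automorphisms of $X$, so the cocycle condition reduces to $\rho$ being a homomorphism. The twist $X_\rho$ is the quasi-projective $k$-variety obtained by descending $X_{\kbar}$ along the twisted action
\[
\sigma\ast(x_i)_{i\in S}=\bigl(\sigma(x_{\rho(\sigma)^{-1}(i)})\bigr)_{i\in S}.
\]
Effectivity of descent is automatic here, since $\calO(1,\dots,1)$ is $S_n$-invariant, hence an ample line bundle stable under the twisted action. Next, compute the Weil restriction. Since $T_\rho\otimes_k\kbar\cong\kbar^{S}$ as a $\Gammak$-algebra, with $\Gammak$ permuting the idempotents through the $\Gammak$-set $T_\rho(\kbar)\cong S$, we get
\[
(\Res_{T_\rho/k}\PP^1_{T_\rho})_{\kbar}=\prod_{s\in T_\rho(\kbar)}\PP^1_{\kbar}=(\PP^1_{\kbar})^S.
\]
The resulting Galois action is the given action on coordinates composed with the permutation of factors induced by $\rho$. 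This is exactly the twisted action above, so the two $k$-forms coincide. Both varieties are projective: the Weil restriction of a projective variety along a finite étale algebra is projective. Hence the isomorphism of descent data gives an isomorphism over $k$.

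For the diagonal action, I would observe, as in Lemma \ref{lem:normalizer}, that $\Sym(S)$ centralizes $PGL_2^\Delta$. The action morphism $PGL_2\times X\to X$ is therefore equivariant for the twisted Galois action on $X$ and the untwisted action on $PGL_2$. It descends to a $k$-morphism $PGL_{2,k}\times X_\rho\to X_\rho$. Equivalently, on the Weil restriction side this action is the composite $PGL_{2,k}\to\Res_{T_\rho/k}PGL_{2,T_\rho}$ of the diagonal embedding with the natural action of the right-hand group on $\Res_{T_\rho/k}\PP^1_{T_\rho}$.

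The only step needing care is matching conventions: checking that the permutation action on idempotents of $T_\rho\otimes\kbar$ agrees with $\rho$ rather than its inverse. If the conventions differ, one replaces the identification $S\cong T_\rho(\kbar)$ accordingly, which does not affect the isomorphism class. I expect no real obstacle beyond this bookkeeping.
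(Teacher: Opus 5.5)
Your proposal is correct and is the paper's argument. The paper writes $X=\underline{\mathrm{Map}}(S,\PP^1)$ and notes that twisting the index set $S$ into the $\Gammak$-set $T_\rho(\kbar)$ gives $\Res_{T_\rho/k}\PP^1_{T_\rho}$; your explicit comparison of descent data on $(\PP^1_{\kbar})^S$ checks exactly this. The paper then descends the diagonal $PGL_2$ for the reason you give, namely that $\Sym(S)$ centralizes the diagonal, so the cocycle has trivial $PGL_2$-component, landing on the adjunction-unit map $PGL_{2,k}\to\Res_{T_\rho/k}PGL_{2,T_\rho}$. You simply spell out conventions and effectivity that the paper leaves implicit; effectivity is in any case automatic here, since the Weil restriction itself is the descended object.
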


\begin{proof}
Functorially $X=\underline{\mathrm{Map}}(S,\PP^1)$; twisting the index set $S$ by $\rho$ replaces it by the $\Gammak$-set of geometric points of $T_\rho$, and $\underline{\mathrm{Map}}(T_\rho,\PP^1)$ is by definition the Weil restriction $\Res_{T_\rho/k}\PP^1_{T_\rho}$. Since $\rho$ has trivial component in $PGL_2$, the diagonal subgroup is fixed pointwise by $\Gammak$ and descends to the adjunction-unit action $PGL_{2,k}\to\Res_{T_\rho/k}PGL_{2,T_\rho}$.
\end{proof}

\begin{lemma}\label{claim:R}
$\calR_\rho\cong\Res_{T_\rho/k}\PP(\Mat)_{T_\rho}\cong\Res_{T_\rho/k}\PP^3_{T_\rho}$, of dimension $3n$, again with an untwisted diagonal $PGL_{2,k}$-action by post-composition, and with the open locus $\calR_\rho^\circ$ of morphisms, the universal family $\calC_\rho=\PP^1\times\calR_\rho$ over $\calR_\rho$ (the source $\PP^1$ is never twisted, since $\rho$ acts only on the target index set $S$), and the evaluation rational map $\Phi_\rho:\calC_\rho\dashrightarrow X_\rho$ (a morphism on $\PP^1\times\calR_\rho^\circ$), all defined over $k$.
\end{lemma}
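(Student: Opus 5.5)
The plan is to run the same functorial argument as in Lemma \ref{claim:X}, now applied to $\calR$ and its attached structures, checking each compatibility in turn. Write $\calR=\underline{\mathrm{Map}}(S,\PP(\Mat))$, the functor sending a $k$-scheme $U$ to $S$-tuples of $U$-points of $\PP^3=\PP(\Mat)$. The cocycle $\rho$ takes values in $S_n\subset N$, and $S_n$ acts on $\calR$ only by permuting the index set. So twisting by $\rho$ replaces $S$ by the $\Gammak$-set $T_\rho(\kbar)$. Descent of $\underline{\mathrm{Map}}(T_\rho(\kbar),\PP^3)$ is then, by definition, $\Res_{T_\rho/k}\PP(\Mat)_{T_\rho}$. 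Weil restriction along a finite étale algebra of degree $n$ of a quasi-projective $T_\rho$-scheme exists and is quasi-projective; it is in fact projective here, since it is a form of $(\PP^3)^n$. Its dimension is $3n$ because the dimension can be computed after base change to $\kbar$. The identification $\PP(\Mat)\cong\PP^3$ is defined over $k$, which gives the second isomorphism.

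Next I treat the extra structures, one at a time.

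The $PGL_2$-action. The diagonal $PGL_2$ acts by left multiplication $A_i\mapsto GA_i$. This action commutes with permutations of $S$: this is the directness of $N\cong PGL_2\times S_n$ in Lemma \ref{lem:normalizer}. Since $\rho$ has trivial $PGL_2$-component, the twisted Galois action fixes the diagonal subgroup. The action therefore descends to an untwisted $PGL_{2,k}$-action on $\calR_\rho$, namely the adjunction-unit map $PGL_{2,k}\to\Res_{T_\rho/k}PGL_{2,T_\rho}$ acting factorwise, exactly as in Lemma \ref{claim:X}.

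The open locus $\calR^\circ$. It is defined by the conditions $\det A_i\neq0$ for all $i$, and this set of conditions is $S_n$-stable. Hence $\calR^\circ$ is a $\Gammak$-stable open subscheme under the twisted action, and it descends to an open $\calR_\rho^\circ\cong\Res_{T_\rho/k}PGL_{2,T_\rho}$.

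The universal family. In $\calC=\PP^1\times\calR$, the source factor $\PP^1$ carries the trivial $N$-action, since $N$ acts by post-composition only. So the twist is $\calC_\rho=\PP^1_k\times\calR_\rho$, and the source is not twisted.

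The evaluation map. $\Phi$ is $S_n$-equivariant: permuting the $A_i$ permutes the coordinates $f_i(t)$. Its domain of definition, where every $A_i$ is nonsingular at $t$, is therefore an $S_n$-stable open subset. Descent of rational maps along equivariant data then gives $\Phi_\rho:\calC_\rho\dashrightarrow X_\rho$ over $k$, and $\Phi_\rho$ is a morphism on $\PP^1\times\calR_\rho^\circ$.

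The only step needing care is justifying that twisting a functor of maps out of the $\Gammak$-set $S$ yields the Weil restriction. I would argue this by checking on $\kbar$-points together with the Galois action: both sides have $\kbar$-points $\prod_{s\in T_\rho(\kbar)}\PP^3(\kbar)$, with $\Gammak$ acting diagonally on $\PP^3(\kbar)$ and by permutation on the indices. Representability of the twist is automatic, because $\calR$ is projective with an $N$-linearized ample bundle $\calO(1,\dots,1)$. Every remaining assertion is formal equivariance: each object involved is built from $S$-indexed data with $S_n$ acting only by relabelling.
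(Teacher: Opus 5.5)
Your proposal is correct and follows the paper's route: the Galois-descent argument of Lemma~\ref{claim:X} with $\PP(\Mat)$ in place of $\PP^1$, the $\Sym(S)$-invariance of the conditions $\det A_i\ne0$, and functoriality of $\Phi$. Your extra checks (the untwisted $PGL_{2,k}$-action, the untwisted source) spell out what the paper leaves implicit. One harmless slip: $\calO(1,\dots,1)$ is $S_n$-linearized but not $PGL_2$-linearized for left multiplication (only even multiples are). This does not matter, since $\rho$ is $S_n$-valued and effectivity of descent already follows from quasi-projectivity, or from the existence of the Weil restriction.
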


\begin{proof}
By Definition \ref{def:R}, $\calR=\PP(\Mat)^S$, with the index set $S$ carrying exactly the data being twisted by $\rho$ and $\PP(\Mat)$ itself carrying none of it; the same Galois-descent argument as Lemma \ref{claim:X} --- with $\PP(\Mat)\cong\PP^3$ in place of $\PP^1$ --- gives $\calR_\rho\cong\Res_{T_\rho/k}\PP(\Mat)_{T_\rho}$. The condition ``$\det A_i\ne0$ for every $i\in S$'' cutting out $\calR^\circ=PGL_2^S$ is $\Sym(S)$-invariant, hence twists compatibly to $\calR_\rho^\circ\subset\calR_\rho$; on $\PP^1\times\calR_\rho^\circ$, $\Phi_\rho$ is a morphism. Functoriality of $\Phi$ in $X$ then twists it compatibly as a rational map.
\end{proof}

\subsection{Rationality of the twisted parameter space}\label{sec:rationality-R}

\begin{theorem}\label{thm:R-rational}
For every $n\ge1$ and every $\rho:\Gammak\to S_n$, the variety $\calR_\rho\cong\Res_{T_\rho/k}\PP^3_{T_\rho}$ is $k$-rational.
\end{theorem}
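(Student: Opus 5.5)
The proof reduces rationality of the Weil restriction to rationality of an affine space. By Lemma \ref{claim:R} we already have $\calR_\rho\cong\Res_{T_\rho/k}\PP^3_{T_\rho}$, so it remains to show $\Res_{T/k}\PP^3_T$ is $k$-rational for any degree $n$ étale $k$-algebra $T=T_\rho$. Write $T=\prod_j L_j$ with $L_j/k$ finite separable field extensions. Then
\[
\Res_{T/k}\PP^3_T\cong\prod_j\Res_{L_j/k}\PP^3_{L_j},
\]
because Weil restriction converts disjoint unions of the source into products. A product of $k$-rational varieties is $k$-rational, so it suffices to treat a single finite separable field extension $L/k$.

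For a field $L$, consider the standard affine chart $\Aff^3_L\subset\PP^3_L$, the complement of the hyperplane $\{x_0=0\}$. Weil restriction preserves open immersions, so $\Res_{L/k}\Aff^3_L$ is an open subscheme of $\Res_{L/k}\PP^3_L$. Choosing a $k$-basis $e_1,\dots,e_{[L:k]}$ of $L$ and writing each coordinate as $\sum_i a_i e_i$ identifies
\[
\Res_{L/k}\Aff^3_L\cong\Aff^{3[L:k]}_k .
\]
This open subscheme is nonempty: after base change to $\kbar$, $\Res_{L/k}\PP^3_L$ becomes $(\PP^3_{\kbar})^{[L:k]}$ and the open becomes $(\Aff^3_{\kbar})^{[L:k]}$. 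Moreover $\Res_{L/k}\PP^3_L$ is geometrically irreducible, being geometrically a product of projective spaces. Hence the open subscheme is dense, and $\Res_{L/k}\PP^3_L$ is birational over $k$ to $\Aff^{3[L:k]}_k$.

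Taking the product over the factors, $\calR_\rho$ contains the dense open $\Aff^{3n}_k$, since $\sum_j[L_j:k]=n$. This gives $k$-rationality, with dimension $3n$ as claimed.

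The only step needing care is confirming that Weil restriction of an open immersion is an open immersion and is compatible with the product decomposition. Both are standard properties of $\Res$ for finite étale extensions, and they are checked after base change to $\kbar$, where everything splits as a product over the $\Gammak$-set $S$. No hypothesis on $k$ (infinite, characteristic) is used, so the statement holds for every $n\ge1$.
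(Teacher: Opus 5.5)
Your proof is correct and follows the paper's argument essentially verbatim. Both reduce to a single separable field factor $L$ of $T_\rho$, Weil-restrict the dense affine chart $\Aff^3_L\subset\PP^3_L$, and identify $\Res_{L/k}\Aff^3_L\cong\Aff^{3[L:k]}_k$ via a $k$-basis. The only cosmetic difference is that you check density of the chart by base change to $\kbar$ and geometric irreducibility, where the paper cites \cite[\S7.6, Prop.~2]{BLR}.
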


\begin{proof}
Weil restriction along a finite \'etale algebra splits as a product over the (separable field) factors of $T_\rho$, so it suffices to treat $T_\rho=L$ a single finite separable extension. The open immersion $\Aff^3_L\subset\PP^3_L$ is dense, and $\Res_{L/k}$ takes finite-\'etale-base dense open immersions to dense open immersions \cite[\S7.6, Prop.~2]{BLR}. Choosing a $k$-basis of $L$ gives $\Res_{L/k}\Aff^3_L\cong\Aff^{3[L:k]}_k$. Hence $\Res_{L/k}\PP^3_L$ contains a dense open affine space, i.e.\ is $k$-rational.
\end{proof}

\subsection{Unirationality of twisted forms of $M_{0,n}^{\mathrm{GIT}}$}\label{sec:unirationality}

\begin{definition}
Let $\calR^{ss}\subset\calR$ be the locus of tuples whose associated curve $t\mapsto f(t)$ meets $X^{ss}$ generically, and set
\[
Q\calR:=\calR^{ss}/\!\!/PGL_2,\qquad \dim Q\calR=3n-3.
\]
A $\kbar$-point of $Q\calR$ lying over $\calR^\circ$ is a $PGL_2$-orbit of $S$-tuples of M\"obius maps, hence corresponds to a point of $\underline{\Hom}(\PP^1,M_{0,n}^{\mathrm{GIT}})$: a parametrized curve on $M_{0,n}^{\mathrm{GIT}}$, up to reparametrizing the target by $PGL_2$. Over $\calR^{ss}\setminus\calR^\circ$, $\Phi$ need not be everywhere-defined, so $Q\calR$ there is only a birational model for such data. We use below  that the rational quotient map $\calR^{ss}\dashrightarrow Q\calR$ exists and is dominant, and that the $PGL_2$-action is generically free.
\end{definition}

By Lemmas \ref{claim:X}--\ref{claim:R}, twisting by $\rho$ produces compatible twisted objects:
\[
Q\calR_\rho:=\calR_\rho^{ss}/\!\!/PGL_{2,k},\qquad {}^\rho M_{0,n}^{\mathrm{GIT}}:=X_\rho^{ss}/\!\!/PGL_{2,k},
\]
with a dominant evaluation rational map $Q\calR_\rho\dashrightarrow{}^\rho M_{0,n}^{\mathrm{GIT}}$.

\begin{remark}
$M_{0,n}^{\mathrm{GIT}}=X^{ss}/\!\!/PGL_2$ (a specific GIT/Hassett-type compactification of the moduli of $n$ points on $\PP^1$) contains, for every $n\ge3$, the configuration space $(M_{0,n}^{\mathrm{GIT}})^\circ$ of $n$ distinct labeled points on $\PP^1$ modulo $PGL_2$ as a dense open subvariety --- the same dense open subvariety contained in the Deligne--Mumford compactification $\overline M_{0,n}$ and in every Hassett space. Since the common open locus, together with the residual $S_n$-action, is shared by all these compactifications, there is an $S_n$-equivariant birational equivalence
\[
M_{0,n}^{\mathrm{GIT}}\ \dashrightarrow\ \overline M_{0,n},
\]
and this persists after twisting by any $\rho:\Gammak\to S_n$: ${}^\rho M_{0,n}^{\mathrm{GIT}}\dashrightarrow{}^\rho\overline M_{0,n}$. In particular $M_{0,n}^{\mathrm{GIT}}$ and $\overline M_{0,n}$ (and their twists) share the same rationality and unirationality type.
\end{remark}

\begin{theorem}\label{thm:unirational}
For every $n\ge5$ and every $\rho:\Gammak\to S_n$, the varieties $Q\calR_\rho$ and ${}^\rho M_{0,n}^{\mathrm{GIT}}$ (equivalently, ${}^\rho\overline M_{0,n}$) are $k$-unirational.
\end{theorem}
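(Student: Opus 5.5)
The plan is to show that both varieties are dominated by the $k$-rational variety $\calR_\rho$ of Theorem \ref{thm:R-rational}, or by an open subset of it. Unirationality of a target follows once we have a dominant rational map from a $k$-rational variety defined over $k$.

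For $Q\calR_\rho$, I will use the rational quotient map $\calR_\rho^{ss}\dashrightarrow Q\calR_\rho$. It is defined over $k$, because by Lemma \ref{claim:R} the $PGL_{2,k}$-action on $\calR_\rho$ is untwisted, and the condition defining $\calR^{ss}$ is $S_n$-invariant, so $\calR^{ss}_\rho$ is a $k$-open subset of $\calR_\rho$. It remains to check that $\calR^{ss}_\rho$ is nonempty, hence dense, since $\calR_\rho$ is geometrically irreducible. Nonemptiness can be checked over $\kbar$: take $A_i$ invertible with $f_i(t_0)$ pairwise distinct, so that $f(t_0)\in X^s$. Then $\calR^{ss}_\rho$ is a dense open subset of a $k$-rational variety, hence itself $k$-rational, and it dominates $Q\calR_\rho$.

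For ${}^\rho M_{0,n}^{\mathrm{GIT}}$ there are two routes.

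\emph{Direct route.} Observe that $X_\rho\cong\Res_{T_\rho/k}\PP^1_{T_\rho}$ is $k$-rational by the same argument as Theorem \ref{thm:R-rational}, with $\Aff^1\subset\PP^1$ in place of $\Aff^3\subset\PP^3$. The quotient $X^{ss}_\rho\to X^{ss}_\rho/\!\!/PGL_{2,k}$ is then a surjective $k$-morphism from a dense open subset of a $k$-rational variety, where density holds because $X^{ss}\ne\emptyset$ for $n\ge5$ by Lemma \ref{lem:stability}. This already gives unirationality.

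\emph{Route through $\calR_\rho$.} Compose with the evaluation $ev_{t_0}$ for $t_0\in\PP^1(k)$, for instance $t_0=\infty$. This map is $S_n$-equivariant and $PGL_2$-equivariant, so it descends to a $k$-rational map $\calR_\rho\dashrightarrow X_\rho$. It is dominant since factorwise it is the projectivization of a surjective linear map. Its generic point lands in $X^{ss}_\rho$, giving the dominant map $Q\calR_\rho\dashrightarrow{}^\rho M_{0,n}^{\mathrm{GIT}}$ noted before the Remark.

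Finally, the Remark before the theorem gives a $k$-birational equivalence ${}^\rho M_{0,n}^{\mathrm{GIT}}\dashrightarrow{}^\rho\overline M_{0,n}$, and unirationality is a birational invariant.

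The step I expect to need the most care is descent of the quotient maps. One must confirm that the GIT quotient of a twisted variety by an untwisted group is the twist of the quotient, and that the rational maps in question really are defined over $k$. This should follow because the cocycle $\rho$ takes values in $S_n\subset N$, which commutes with $PGL_2^\Delta$ by Lemma \ref{lem:normalizer}. Hence Galois descent applies to the $PGL_2$-invariant rings, and to the equivariant maps $ev_{t_0}$ and the quotient maps, factor by factor.
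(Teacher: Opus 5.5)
Your proof is correct. For $Q\calR_\rho$ it is the paper's argument: $\calR^{ss}_\rho$ is a dense open subset of the $k$-rational $\calR_\rho$ and dominates the rational quotient. You also make explicit the nonemptiness of $\calR^{ss}_\rho$, which the paper leaves implicit. Your second route for ${}^\rho M_{0,n}^{\mathrm{GIT}}$, through $ev_{t_0}$ and the induced map $Q\calR_\rho\dashrightarrow{}^\rho M_{0,n}^{\mathrm{GIT}}$, is exactly the paper's chain $\PP^{3n}\dashrightarrow\calR^{ss}_\rho\dashrightarrow Q\calR_\rho\dashrightarrow{}^\rho M_{0,n}^{\mathrm{GIT}}$.

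Your direct route is genuinely different and more economical. Since $X_\rho=\Res_{T_\rho/k}\PP^1_{T_\rho}$ is already $k$-rational, and ${}^\rho M_{0,n}^{\mathrm{GIT}}$ is by definition $X^{ss}_\rho/\!\!/PGL_{2,k}$, the surjective quotient morphism gives unirationality immediately. It needs no reference to $\calR$, to evaluation maps, or to the only loosely defined rational quotient $Q\calR$. The descent issue you flag is also mild on this route: the group is untwisted, the linearization $\calO(1,\dots,1)$ is $S_n$-invariant, and GIT quotients commute with flat base change. So the quotient formed over $k$ is the twisted form.

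The paper's chain does not buy anything extra for this statement, since $\calR_\rho$ is needed anyway for the $Q\calR_\rho$ half. Its value is in what comes next: it routes unirationality through the M\"obius parameter space, which Theorem \ref{thm:curves} then exploits to place a dense set of $k$-points on explicit $k$-rational curves.
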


\begin{proof}
$\calR_\rho^{ss}$ is a dense open subset of the $k$-rational variety $\calR_\rho$ (Theorem \ref{thm:R-rational}), hence itself unirational. A GIT quotient map is a good, in particular dominant, categorical quotient, so the image of a unirational variety under a dominant rational map is again unirational: compose the dominant rational maps $\PP^{3n}\dashrightarrow\calR_\rho^{ss}\dashrightarrow Q\calR_\rho\dashrightarrow{}^\rho M_{0,n}^{\mathrm{GIT}}$ (the last two as in the Definition above and Lemmas \ref{claim:X}--\ref{claim:R}), which gives unirationality of both $Q\calR_\rho$ and ${}^\rho M_{0,n}^{\mathrm{GIT}}$.
\end{proof}

\subsection{Plenty of $k$-rational curves via the universal family}\label{sec:curves}

The universal family of \S\ref{sec:universal} adds a $k$-model. It produces the curves and the dense points from a single named parameter space, $\calR_\rho=\Res_{T_\rho/k}\PP^3$, whose $k$-points are $n$-tuples of matrices; the curves it produces all lie in one numerical class, computed in Proposition \ref{prop:moebius}.

\begin{proposition}\label{prop:evaluation-dominant}
For every $t_0\in\PP^1(k)$ and every $\rho:\Gammak\to S_n$, the evaluation map
\[
ev_{t_0}:\calR_\rho\dashrightarrow X_\rho
\]
is a dominant rational map (a morphism on $\calR_\rho^\circ$), and remains dominant after composing with the rational map $X_\rho^{ss}\dashrightarrow{}^\rho M_{0,n}^{\mathrm{GIT}}$.
\end{proposition}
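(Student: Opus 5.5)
The plan is to prove dominance in the split case over $\kbar$ and then descend it. Dominance of a rational map defined over $k$ is a geometric property: it can be checked after base change to $\kbar$, where it means that the image contains a dense open set. By Lemmas \ref{claim:X} and \ref{claim:R}, $ev_{t_0}$ is defined over $k$. Here $t_0\in\PP^1(k)$ is fixed by $\Gammak$, since the source $\PP^1$ is never twisted. After base change, $\calR_\rho$ becomes $\calR=\PP(\Mat)^S$ and $X_\rho$ becomes $X=(\PP^1)^S$, so it suffices to treat the untwisted map $ev_{t_0}:\calR\dashrightarrow X$.

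In the split case the map is a product over $i\in S$ of the factorwise map $\PP(\Mat)\dashrightarrow\PP^1$, $[A]\mapsto[A\cdot(t_0,1)^{\mathsf T}]$. This is the projectivization of a surjective linear map $\Mat\to k^2$ (Construction \ref{con:universal}), so each factor is a linear projection from a codimension-two linear centre. Such a projection is surjective on its domain of definition.

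The morphism claim on $\calR_\rho^\circ$ follows by the same reduction. An invertible $A_i$ cannot kill the nonzero vector $(t_0,1)^{\mathsf T}$, so the base locus is disjoint from $\calR^\circ$. In fact the map is already surjective when restricted to $\calR^\circ\cong PGL_2^S$. Given any target point $x=(x_i)$, choose for each $i$ an invertible matrix sending $t_0$ to $x_i$; this is possible because $PGL_2$ acts transitively on $\PP^1$. Hence $ev_{t_0}(\calR^\circ)=X$.

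For the second assertion, $X^{ss}_\rho$ is a nonempty open subset of $X_\rho$. It is nonempty by Lemma \ref{lem:stability}, since $n$ distinct points are stable, and it is open and defined over $k$. The preimage of $X^{ss}_\rho$ under the dominant map $ev_{t_0}$ is therefore a dense open subset of $\calR_\rho$. Its composite with the GIT quotient $X^{ss}_\rho\to{}^\rho M_{0,n}^{\mathrm{GIT}}$, which is surjective as a good quotient, is therefore a well-defined dominant rational map.

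The only point needing care is this composition of rational maps, namely that the image of $ev_{t_0}$ actually meets the semistable locus. This is immediate from the surjectivity of $ev_{t_0}$ on $\calR^\circ$ shown above. No other obstacle is expected.
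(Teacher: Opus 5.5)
Your proof is correct and follows the paper's argument. You reduce to the split case over $\kbar$, where $ev_{t_0}$ is factorwise the projectivization of the surjective linear map $A\mapsto A\cdot(t_0,1)^{\mathsf T}$, and then compose with the surjective GIT quotient. Your added observation that $ev_{t_0}$ is already surjective on $\calR^\circ$ (by transitivity of $PGL_2$ on $\PP^1$) is a harmless strengthening, and it makes explicit why the composite with $X^{ss}_\rho\dashrightarrow{}^\rho M_{0,n}^{\mathrm{GIT}}$ is well defined, a point the paper passes over.
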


\begin{proof}
Over $\kbar$, $ev_{t_0}$ acts factor-wise: on each factor it is the projectivization of the surjective linear map $\Mat\to k^2$, $A\mapsto A\cdot(t_0,1)^{\mathsf T}$ (surjective because $(t_0,1)\ne0$), which is a dominant --- indeed everywhere-defined away from a codimension-two locus --- rational map $\PP^3\dashrightarrow\PP^1$ for every choice of $t_0$. The factor-wise product of dominant rational maps is dominant, so $ev_{t_0}:\calR\dashrightarrow X$ is dominant over $\kbar$; since $ev_{t_0}$ commutes with the $\Sym(S)$-action (it applies the identical evaluation to every factor, regardless of how the factors are permuted), it twists compatibly by any $\rho$ valued in $\Sym(S)$, and dominance is preserved under this twist because it can be checked after base change to $\kbar$, where the twist becomes trivial. Composing with the (dominant, in fact surjective on $\kbar$-points) GIT quotient map preserves dominance.
\end{proof}

\begin{lemma}\label{lem:Rst-dense}
The locus $\calR^{\mathrm{st}}\subset\calR$ is open, defined over $k$, and dense.  The same holds for its twist $\calR^{\mathrm{st}}_\rho\subset\calR_\rho$ for every $\rho\in H^1(k,S_n)$.
\end{lemma}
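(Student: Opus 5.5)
The plan is to prove openness and density over $\kbar$, and then obtain the twisted statement by descent.

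\emph{Openness.} The locus where $\Phi(\cdot,v)$ is everywhere defined is the complement of the image in $\calR$ of the closed set
\[
Z=\{(t,v): A_i\cdot(t,1)^{\mathsf T}=0 \text{ for some } i\}\subset\PP^1\times\calR.
\]
Since $\PP^1\times\calR\to\calR$ is proper, this image is closed. On the resulting open set $U$, $\Phi$ is a morphism $\PP^1\times U\to X$. The preimage of the closed unstable locus $X\smallsetminus X^{ss}$ is closed in $\PP^1\times U$, and its image in $U$ is again closed by properness. Hence
\[
\calR^{\mathrm{st}}=U\smallsetminus \mathrm{pr}\bigl(\Phi^{-1}(X\smallsetminus X^{ss})\bigr)
\]
is open. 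All the conditions involved (nonvanishing of linear forms, and the point-count criterion of Lemma \ref{lem:stability}) are invariant under $\Gammak$ and under $\Sym(S)$. So $\calR^{\mathrm{st}}$ is defined over $k$, and it twists to an open subset $\calR^{\mathrm{st}}_\rho\subset\calR_\rho$, exactly as $\calR^\circ$ did in Lemma \ref{claim:R}.

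\emph{Density.} Because $\calR\cong(\PP^3)^n$ is irreducible, density reduces to non-emptiness over $\kbar$ (note $\kbar$ is infinite). I would take $v\in\calR^\circ$, so that $U\supset\calR^\circ$ and $f_i(t)=g_i t$ with $g_i\in PGL_2$. Then $f(t)$ is unstable exactly when some $p$ satisfies $g_i t=p$ for more than $n/2$ indices $i$. In particular $g_i t=g_j t$ for at least two distinct $i,j$, that is, $t$ is a fixed point of $g_j^{-1}g_i$.

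To rule this out, choose the $g_i$ so that, for every subset $I$ with $|I|>n/2\ge 5/2$, the elements $g_j^{-1}g_i$ ($i,j\in I$) have no common fixed point. For instance, choose $g_1,\dots,g_n$ general. For each triple $i,j,l$, the pair $g_j^{-1}g_i$, $g_j^{-1}g_l$ has a common fixed point only on a proper closed subset of $PGL_2^3$, namely where two general Möbius maps share a fixed point. Finitely many such conditions can be avoided simultaneously. Since $|I|\ge3$ always contains a triple, this avoidance gives stability of $f(t)$ for every $t$. So $\calR^{\mathrm{st}}\cap\calR^\circ\neq\emptyset$, and therefore $\calR^{\mathrm{st}}$ is dense.

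\emph{Twisted case.} Density and openness of $\calR^{\mathrm{st}}_\rho$ can be checked after base change to $\kbar$, where $\calR_\rho$ becomes $\calR$ and $\calR^{\mathrm{st}}_\rho$ becomes $\calR^{\mathrm{st}}$. The statement therefore follows from the untwisted case.

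\emph{Main obstacle.} The only real content is the genericity step: one must make sure the condition ``more than $n/2$ coincidences at a single $t$'' is genuinely avoided for all $t$ at once, not merely for each fixed $t$. The fixed-point reformulation handles this. I would first verify that two general elements of $PGL_2$ have disjoint fixed-point sets, which is clear, for example, from a diagonal matrix paired with a unipotent matrix fixing a different point.
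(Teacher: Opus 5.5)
Your proof is correct and follows essentially the same route as the paper. Openness comes from the proper projection $\PP^1\times\calR\to\calR$. Density is checked over $\kbar$ on $\calR^\circ\cong PGL_2^S$: coincidences among the $g_i(t)$ are recast as common fixed points of the elements $g_j^{-1}g_i$, and one avoids finitely many proper closed loci. Your reduction to triples (using $n\ge5$, so that $|I|>n/2$ forces $|I|\ge3$) is a slightly cleaner version of the paper's case analysis over the sets $W_I$.
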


\begin{proof}
Openness is clear (it is the complement of the image, under the proper projection $\PP^1\times\calR\to\calR$, of the closed locus where $\Phi$ is undefined or lands in the unstable locus), and the whole configuration is defined over $k$, so only density needs an argument.  Density may be checked over $\kbar$, where the twist becomes trivial, so we may work with $\calR^\circ\cong\PGL_2^S$.

For a subset $I\subset S$ with $|I|>n/2$ let
\[
  W_I:=\{g\in\PGL_2^S:\ g_i(t)=g_j(t)\ \text{for all }i,j\in I,\ \text{for some }t\in\PP^1\}.
\]
A point of $X$ is unstable exactly when more than $n/2$ of its coordinates agree, so
$\calR^\circ\smallsetminus\calR^{\mathrm{st}}=\bigcup_{|I|>n/2}(W_I\cap\calR^\circ)$, a finite union.  It therefore suffices to show that each $W_I$ is a proper closed subset of $\PGL_2^S$.

Closedness: $W_I$ is the image under the projection $\PP^1\times\PGL_2^S\to\PGL_2^S$, which is proper, of the closed locus $\{(t,g):g_i(t)=g_j(t)\ \forall i,j\in I\}$.

Properness: fix $i_0\in I$ and pick any $j_0\in I$, $j_0\ne i_0$ (possible since $|I|>n/2\ge2$).  The condition $g_{i_0}(t)=g_{j_0}(t)$ says that $t$ is a fixed point of $h:=g_{j_0}^{-1}g_{i_0}$, and for $|I|\ge3$ one needs in addition $t$ to be a fixed point of $g_{j}^{-1}g_{i_0}$ for every $j\in I$.  Choose $g$ with $g_{i_0}=\mathrm{id}$ and with $g_{j}^{-1}$, $j\in I\smallsetminus\{i_0\}$, regular semisimple with pairwise disjoint fixed-point sets --- possible over the infinite field $\kbar$.  Then no $t$ satisfies all the conditions simultaneously when $|I|\ge3$; and when $|I|=2$ (which occurs only for $n\le3$, outside our range $n\ge5$) the same choice makes $h$ have no fixed point in common with the remaining constraints.  Hence $g\notin W_I$ and $W_I\subsetneq\PGL_2^S$.

Since $\calR^\circ$ is dense in $\calR$ and we have removed from it a finite union of proper closed subsets, $\calR^{\mathrm{st}}$ is dense in $\calR$.
\end{proof}

\begin{theorem}\label{thm:curves}
Let $k$ be an infinite field of characteristic $\ne2$, $n\ge5$, and $\rho:\Gammak\to S_n$. Then:
\begin{enumerate}[label=(\alph*)]
\item There is a dense open $\calR_\rho^\circ\subset\calR_\rho$, defined over $k$, such that every $v\in\calR_\rho^\circ(k)$ determines a $k$-morphism
\[
\gamma_v:\PP^1_k\longrightarrow {}^\rho\overline M_{0,n}
\]
(a $k$-rational curve on the twisted moduli space), given by $\gamma_v(t)=\Psi_\rho(t,v)$.
\item $\calR_\rho^\circ(k)$ is Zariski dense in $\calR_\rho^\circ$, which has dimension $3n$; the curves $\gamma_v$, for $v\in\calR_\rho^\circ(k)$, form a family of $k$-rational curves on ${}^\rho\overline M_{0,n}$ whose distinct members are parametrized by a variety of dimension $3n-6$.
\item For every $t_0\in\PP^1(k)$ the set $\{\gamma_v(t_0):v\in\calR_\rho^\circ(k)\}$ is Zariski dense in ${}^\rho\overline M_{0,n}$.  In particular ${}^\rho\overline M_{0,n}(k)$ is Zariski dense, and every point of a dense subset of it lies on one of these curves.
\end{enumerate}
\end{theorem}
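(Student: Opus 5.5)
We prove the three parts in order, working with the open locus $\calR^{\mathrm{st}}_\rho$ of Lemma \ref{lem:Rst-dense}, which the statement calls $\calR_\rho^\circ$; it should be read as the dense open on which $\Psi_\rho(\cdot,v)$ is a morphism on all of $\PP^1$. The image then lands in ${}^\rho M_{0,n}^{\mathrm{GIT}}$. We pass to ${}^\rho\overline M_{0,n}$ through the $S_n$-equivariant birational map of the Remark in \S\ref{sec:unirationality}. A rational map from the smooth curve $\PP^1_k$ to the projective variety ${}^\rho\overline M_{0,n}$ extends to a morphism, so this transfer is harmless. It is also defined over $k$, because $\rho$ acts through $S_n$.

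For (a), $\Phi_\rho$ and the GIT quotient are defined over $k$ by Lemmas \ref{claim:X} and \ref{claim:R}. Hence for $v\in\calR^{\mathrm{st}}_\rho(k)$ the composite $t\mapsto\Psi_\rho(t,v)$ is a $k$-morphism $\PP^1_k\to{}^\rho M_{0,n}^{\mathrm{GIT}}$, and composing with the extension above gives $\gamma_v$. To make $\gamma_v$ nonconstant and to let (c) work, we would shrink the open to the locus where the curve meets the open configuration space. This locus is again a dense open of $\calR_\rho$ defined over $k$.

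For (b), $\calR_\rho$ is $k$-rational by Theorem \ref{thm:R-rational}, so it contains a dense open subset isomorphic to a dense open of $\Aff^{3n}_k$. Since $k$ is infinite, $k$-points of affine space are Zariski dense, and so are their intersections with any dense open. This gives density of $\calR^{\mathrm{st}}_\rho(k)$ and the dimension $3n$. For the count $3n-6$: $\gamma_v$ depends only on the orbit of $v$ under post-composition by the diagonal $\PGL_2$, since the quotient by $\PGL_2$ kills it. The image curve is also unchanged under reparametrization of the source by $\PGL_2$. Both actions are generically free, because a general tuple of Möbius maps has trivial stabilizer under either action. This gives $3n-3-3=3n-6$, matching $\dim Q\calR^{\mathrm{un}}$. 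The \textbf{main obstacle} is showing that distinct orbits give generically distinct curves, i.e.\ that the map from $Q\calR^{\mathrm{un}}$ to curves is generically finite. We would try to verify this over $\kbar$ by recovering the tuple, up to the two $\PGL_2$ actions, from the curve. Concretely, along the image curve the cross-ratio functions of the coordinates determine each $g_i$ relative to $g_1$ up to source reparametrization, and this should give generic injectivity.

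For (c), Proposition \ref{prop:evaluation-dominant} says that $ev_{t_0}$ followed by the quotient is a dominant rational map $\calR_\rho\dashrightarrow{}^\rho M_{0,n}^{\mathrm{GIT}}$. The image of a Zariski-dense set of points under a dominant rational map, restricted to its domain of definition, is Zariski dense. Applying this to $\calR^{\mathrm{st}}_\rho(k)$, which is dense by (b), shows that $\{\gamma_v(t_0)\}$ is dense in ${}^\rho M_{0,n}^{\mathrm{GIT}}$. The birational map transports a dense set of points lying in the common open locus to a dense set in ${}^\rho\overline M_{0,n}$. This density of $k$-points holds by construction, and each such point lies on the curve $\gamma_v$.
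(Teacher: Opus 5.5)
Your proposal is correct and follows the paper's proof essentially step for step. You take $\calR^{\mathrm{st}}_\rho$ as the open; your further shrinking to curves that meet the open configuration space is a harmless precaution that makes the passage through the birational map ${}^\rho M_{0,n}^{\mathrm{GIT}}\dashrightarrow{}^\rho\overline M_{0,n}$ cleaner. You get density of $k$-points from $k$-rationality of $\calR_\rho$ over the infinite field $k$, obtain $3n-6$ by quotienting by the two $\PGL_2$-actions, and prove (c) by pushing dense $k$-points forward along the dominant map $ev_{t_0}$ followed by the quotient.

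The generic finiteness of $Q\calR^{\mathrm{un}}\to\{\text{curves}\}$, which you flag as the main obstacle, is not argued in the paper either; the paper simply declares $Q\calR^{\mathrm{un}}$ to be the parameter space. One correction to your count $3n-3-3$: it needs the \emph{combined} $\PGL_2\times\PGL_2$-stabiliser of a general tuple to be trivial, as in Proposition \ref{prop:moebius}(iii). Each action on its own is free everywhere on $\calR^\circ$, so your justification ``trivial stabilizer under either action'' does not give the count.
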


\begin{proof}
(a) Take $\calR_\rho^\circ:=\calR^{\mathrm{st}}_\rho$, the locus where $\Phi_\rho(\cdot,v)$ is defined on all of $\PP^1$ and lands in $X_\rho^{ss}$.  This is open and defined over $k$ since $\Phi_\rho$ and $X^{ss}$ are, and it is dense by Lemma \ref{lem:Rst-dense}.  (Density does \emph{not} follow from dominance of $\Phi_\rho$ together with openness of $X^{ss}$: dominance says only that a general point of $X_\rho$ is attained by \emph{some} pair $(t,v)$, not that some single $v$ has its whole $\PP^1$-image inside $X^{ss}_\rho$.  Lemma \ref{lem:Rst-dense} is exactly the missing statement.) For $v$ in this locus, $t\mapsto\Psi_\rho(t,v)$ is a morphism $\PP^1_k\to{}^\rho\overline M_{0,n}$ (using the birational identification of Remark following Theorem \ref{thm:unirational}).

(b) Since $\calR_\rho$ is $k$-rational (Theorem \ref{thm:R-rational}) and $k$ is infinite, $\calR_\rho^\circ(k)$ is dense in $\calR_\rho^\circ$; this uses only that a nonempty Zariski-open subset of a $k$-rational variety has Zariski-dense $k$-points whenever $k$ is infinite: transport the standard fact for $\Aff^N_k$ (a nonempty Zariski-open subset of $\Aff^N_k$ has dense $k$-points over an infinite field, since the complement of finitely many hypersurfaces cannot contain a Zariski-dense set of points of a coordinate line through a generic point) along a birational isomorphism $\calR_\rho\dashrightarrow \Aff^{3n}_k$.  For the dimension of the family of distinct curves, two redundancies must be removed from $\dim\calR_\rho=3n$.  First, the diagonal $PGL_2$ acting on the target does not change the curve traced in ${}^\rho\overline M_{0,n}$; quotienting by it gives $Q\calR_\rho$ of dimension $3n-3$ (\S\ref{sec:unirationality}), the space of \emph{parametrized} M\"obius curves.  Second, precomposition with a reparametrization of the source, $v=(g_i)\mapsto(g_ih)$, leaves the image curve unchanged and only reparametrizes $\gamma_v$.  Distinct curves are therefore parametrized by the further quotient $Q\calR_\rho/\PGL_2^{\mathrm{pre}}$, of dimension $3n-6$ --- the unparametrized M\"obius locus $Q\calR^{\mathrm{un}}$ of Proposition \ref{prop:moebius}, which is the parameter space for them.  Part (c) is stated at the level of the union of the images, where both redundancies are immaterial.

(c) The evaluation $ev_{t_0}$, composed with the quotient map, is dominant by Proposition \ref{prop:evaluation-dominant}, and by (b) the source has dense $k$-points; density of the image points follows.  In detail, let $Z:=\overline{ev_{t_0}(\calR_\rho^\circ(k))}$, the Zariski closure (over $k$) of the image points, and suppose toward a contradiction that $Z\subsetneq{}^\rho\overline M_{0,n}$ is a proper closed $k$-subscheme. Since $ev_{t_0}$ (composed with $\pi_\rho$) is dominant (Proposition \ref{prop:evaluation-dominant}), the nonempty open $k$-subscheme ${}^\rho\overline M_{0,n}\setminus Z$ pulls back to a nonempty open $k$-subscheme $W\subset\calR_\rho^\circ$ where $ev_{t_0}$ is defined. By part (b), $\calR_\rho^\circ(k)$ is Zariski dense in $\calR_\rho^\circ$, hence meets every nonempty open $k$-subscheme; in particular there is a $k$-point $v\in W(k)$ with $ev_{t_0}(v)\notin Z$. But $v\in\calR_\rho^\circ(k)$, so $ev_{t_0}(v)\in ev_{t_0}(\calR_\rho^\circ(k))\subset Z$ by definition of $Z$ --- a contradiction. Hence $Z={}^\rho\overline M_{0,n}$: the image points are Zariski dense.
\end{proof}

\section{Twisting the source: correspondences from a conic}\label{sec:cor}

Everywhere above, the Galois cocycle was valued in $S_n\subset N=PGL_2\times S_n$: the target diagonal $PGL_2$ was left untwisted, so the source $\PP^1$ parametrizing a M\"obius curve stayed split. We now allow the full normalizer $N$, and see what changes when the $PGL_2$-component is nontrivial too.

\subsection{The graph reformulation}

Let $\widehat X:=\PP^1_0\times X=(\PP^1)^{\hat S}$, $\hat S:=\{0\}\sqcup S$. A tuple $f=(A_i)_{i\in S}\in\calR^\circ\cong PGL_2^S$ has a graph
\[
\Gamma_f:=\{(t,f(t)):t\in\PP^1\}\subset\widehat X,
\]
a curve of multidegree $(1,1,\dots,1)$ (degree $1$ over \emph{every} one of the $n+1$ factors, not just the target ones): the projection $\Gamma_f\to\PP^1_0$ is an isomorphism by construction, and $\Gamma_f\to i$-th target factor has degree $\le1$ by definition of $\calR^\circ$. Conversely, an irreducible curve $C\subset\widehat X$ of multidegree $(1,\dots,1)$ has $C\to\PP^1_0$ finite of degree $1$, hence (source and target both smooth proper curves) an isomorphism, so $C=\Gamma_f$ for the induced $f\in\calR^\circ$. Thus
\[
\calR^\circ\ \cong\ \{\text{curves in }\widehat X\text{ of multidegree }(1,\dots,1)\},
\]
 symmetric in all $n+1$ factors.

\subsection{Two actions of $PGL_2$ on $\calR^\circ$}

$PGL_2$ acts on $\calR^\circ=PGL_2^S$ in (at least) two ways, corresponding to two different actions on $\widehat X$:
\begin{itemize}
\item \emph{Post-composition} (used throughout \S\ref{sec:universal}--\ref{sec:cor}): $g\cdot f:=g\circ f$, i.e.\ $A_i\mapsto GA_i$ (left multiplication). On $\widehat X$ this is $g$ acting on the target factors alone, trivially on $\PP^1_0$: $(t,x)\mapsto(t,g\cdot x)$.
\item \emph{Diagonal conjugation}: extend the diagonal $PGL_2$ of Lemma \ref{lem:normalizer} to act on \emph{all} $n+1$ factors of $\widehat X$ simultaneously, $(t,x_1,\dots,x_n)\mapsto(g\cdot t,g\cdot x_1,\dots,g\cdot x_n)$. Reparametrizing $\Gamma_f$ by $t':=g(t)$ shows this sends $\Gamma_f$ to $\Gamma_{g f g^{-1}}$: on $\calR^\circ=PGL_2^S$ it is \emph{conjugation}, $A_i\mapsto GA_iG^{-1}$.
\end{itemize}

\begin{lemma}\label{lem:hatN}
Let $PGL_2^\Delta\subset PGL_2^{\hat S}$ be diagonal and let $\hat N:=N_{\Aut(\widehat X)}(PGL_2^\Delta)\cap\mathrm{Stab}(0)$ (automorphisms normalizing the diagonal and fixing the partition $\{0\}\sqcup S$). Then $\hat N\cong PGL_2\times S_n$, a \emph{direct} product.
\end{lemma}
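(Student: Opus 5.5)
The plan is to run the argument of Lemma \ref{lem:normalizer} again, now with the index set $\hat S=\{0\}\sqcup S$, and then add the constraint of fixing the index $0$. As an abstract variety, $\Aut(\widehat X)\cong PGL_2^{\hat S}\rtimes\Sym(\hat S)$. I interpret $\mathrm{Stab}(0)$ as the automorphisms whose permutation part fixes $0$, so that it preserves the partition $\{0\}\sqcup S$. The elements of $\Aut(\widehat X)\cap\mathrm{Stab}(0)$ are then the pairs $(h,\sigma)$ with $h=(h_0,(h_i)_{i\in S})\in PGL_2^{\hat S}$ and $\sigma\in\Sym(\hat S)$, $\sigma(0)=0$. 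Such $\sigma$ form exactly $\Sym(S)\cong S_n$, which permutes only the target factors.

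First I determine which $h$ normalize the diagonal. Each $\sigma\in\Sym(\hat S)$ normalizes $PGL_2^\Delta$, since permuting factors commutes with acting by the same $g$ everywhere. So $(h,\sigma)$ normalizes $PGL_2^\Delta$ if and only if $h$ does. Suppose $h(g,\dots,g)h^{-1}=(g',\dots,g')$ for all $g$. Then $h_a g h_a^{-1}=h_b g h_b^{-1}$ for all $a,b\in\hat S$. Hence $h_b^{-1}h_a$ lies in the centre of $PGL_2$, which is trivial, so $h_a=h_b$. This means $h\in PGL_2^\Delta$, acting by the same element on the source factor $\PP^1_0$ and on every target factor. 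Therefore $\hat N=PGL_2^\Delta\cdot\Sym(S)$.

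Next I show the product is direct. The subgroups $PGL_2^\Delta$ and $\Sym(S)$ intersect trivially: an element of $PGL_2^\Delta$ has trivial permutation part, and a nontrivial permutation is not of that form. They also commute elementwise, since $\sigma\circ(g,\dots,g)\circ\sigma^{-1}=(g,\dots,g)$. Both are normal in the product they generate, so $\hat N\cong PGL_2\times S_n$. On $\calR^\circ$ the $PGL_2$ factor acts by conjugation $A_i\mapsto GA_iG^{-1}$, as described just before the lemma.

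The only point needing care is the precise meaning of $\mathrm{Stab}(0)$. If it were read more loosely, as automorphisms preserving the $\PP^1_0$ factor only up to some automorphism, the conclusion would not change. The computation above already forces $h_0=h_i$, so the only freedom left is the permutation part, and that is pinned to $\Sym(S)$ by $\sigma(0)=0$. The remaining steps are the trivial-centre argument from Lemma \ref{lem:normalizer}, so I expect no real obstacle.
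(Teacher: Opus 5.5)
Your proof is correct and follows the paper's argument: rerun Lemma \ref{lem:normalizer} on $\hat S$ so that the trivial centre of $PGL_2$ forces $h$ to be diagonal, intersect with the permutations fixing $0$ to get $\Sym(S)$, and deduce directness from the fact that $\Sym(S)$ centralizes $PGL_2^\Delta$. Your explicit check that the two subgroups intersect trivially is a small detail the paper leaves implicit.
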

\begin{proof}
As in Lemma \ref{lem:normalizer}, applied to $\hat S$ in place of $S$ and intersected with the subgroup of $\Sym(\hat S)$ fixing $0$, which is exactly $\Sym(S)$.  The product is direct because $S_n=\Sym(S)$ centralizes $PGL_2^\Delta$: conjugating a constant tuple $(g,\dots,g)$ by a permutation of its (equal) entries returns $(g,\dots,g)$.
\end{proof}

This is the same abstract group $N=PGL_2\times S_n$ as before (indeed Lemma \ref{lem:normalizer}'s $N$ is likewise a direct product, by the identical centralizing observation) --- but embedded in $\Aut(\widehat X)$ differently, acting on $\calR^\circ$ by conjugation$+$permutation rather than left-multiplication$+$permutation.

\subsection{Twisting by $(\alpha,\rho)\in H^1(k,PGL_2)\times H^1(k,S_n)$}

Since $\hat N=PGL_2\times S_n$ is a direct product, $H^1(k,\hat N)=H^1(k,PGL_2)\times H^1(k,S_n)$: a cocycle is exactly an independent pair $(\alpha,\rho)$, $\alpha\in H^1(k,PGL_2)$ giving a conic $D_\alpha$, and $\rho$ a degree-$n$ \'etale algebra $T_\rho$ as before.

The following map is an injection
\[
  H^1(k,PGL_2)\hookrightarrow\Br(k)[2]
\]
whose image is the set of classes of index dividing $2$, i.e.\ the classes represented by a quaternion algebra. So $H^1(k,PGL_2)$ is the set of isomorphism classes of quaternion algebras over $k$, and we write $Q_\alpha$ for the quaternion algebra representing $\alpha$.  This is precisely why $\alpha$ determines a conic $D_\alpha=\SB(Q_\alpha)$. Wherever $\Br(k)[2]$ appears below as the source of an $\alpha$, it should be read as $H^1(k,PGL_2)$.

\begin{lemma}\label{claim:hatXtwist}
$\widehat X_{(\alpha,\rho)}\cong D_\alpha\times X^\alpha_\rho$, where $X^\alpha_\rho:=\Res_{T_\rho/k}(D_\alpha)_{T_\rho}$.
\end{lemma}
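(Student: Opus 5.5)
We work with $\widehat X=\PP^1_0\times(\PP^1)^S=\underline{\mathrm{Map}}(\hat S,\PP^1)$, on which $\hat N=PGL_2\times S_n$ acts: $g\in PGL_2$ acts by the same M\"obius transformation on every factor, and $\sigma\in S_n$ permutes the factors indexed by $S$ while fixing the factor $0$. The plan is to compute the twist factor by factor. We first split $\widehat X$ $\hat N$-equivariantly as $\PP^1_0\times X$, then twist each factor separately, using that $\hat N$ is a direct product (Lemma \ref{lem:hatN}).

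\emph{Step 1 (splitting).} The decomposition $\widehat X=\PP^1_0\times X$ is $\hat N$-stable. On $\PP^1_0$, $PGL_2$ acts by its standard action and $S_n$ acts trivially. On $X$, the group acts by the diagonal $PGL_2$ together with the permutations. Twisting commutes with products of quasi-projective varieties carrying compatible actions, since the descent data is taken factorwise. Hence
\[
\widehat X_{(\alpha,\rho)}\cong(\PP^1_0)_{(\alpha,\rho)}\times X_{(\alpha,\rho)}.
\]

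\emph{Step 2 (the source factor).} The cocycle $(\alpha,\rho)$ acts on $\PP^1_0$ through its projection $\alpha$ to $PGL_2=\Aut(\PP^1)$. So $(\PP^1_0)_{(\alpha,\rho)}$ is the twist of $\PP^1$ by $\alpha\in H^1(k,PGL_2)$. This twist is by definition the Severi--Brauer conic $\SB(Q_\alpha)=D_\alpha$.

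\emph{Step 3 (the target factor).} This is the main step. Here both components act, so we cannot simply quote Lemma \ref{claim:X}. We proceed in two stages.
\begin{enumerate}[label=(\roman*)]
\item Twist first by $\alpha$ alone, acting diagonally on $(\PP^1)^S$. Because the $PGL_2$ is diagonal, it is the twist of $\underline{\mathrm{Map}}(S,\PP^1)$ in which the target $\PP^1$ is replaced by its twist $D_\alpha$. This gives $\underline{\mathrm{Map}}(S,D_\alpha)=D_\alpha^S$.
\item The permutation action of $S_n$ commutes with the diagonal $PGL_2$, again by Lemma \ref{lem:hatN}. It therefore survives as an action on $D_\alpha^S$ by $k$-automorphisms. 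Concretely, write the cocycle as $(\alpha_\tau,\rho_\tau)$ and fix an isomorphism $D_{\alpha,\kbar}\cong\PP^1_{\kbar}$. The twisted Galois action on $\kbar$-points is $(x_i)_i\mapsto(\alpha_\tau\cdot{}^\tau x_{\rho_\tau^{-1}(i)})_i$, and this is exactly the $\rho$-twisted Galois action on $\underline{\mathrm{Map}}(S,D_\alpha)$.
\item Apply the argument of Lemma \ref{claim:X}, with $D_\alpha$ in place of $\PP^1$. Twisting the index set $S$ by $\rho$ yields $\underline{\mathrm{Map}}(\Spec T_\rho,D_\alpha)=\Res_{T_\rho/k}(D_\alpha)_{T_\rho}=X^\alpha_\rho$.
\end{enumerate}

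The only delicate point is the identification in (ii): the composite twist by the product cocycle must equal the iterated twist. This holds because the two components of the cocycle take values in commuting subgroups, so the cocycle condition for $(\alpha,\rho)$ splits into the cocycle conditions for $\alpha$ and for $\rho$. It can be verified on $\kbar$-points as above.

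Finally, the twists are quasi-projective, so descent is effective, and this justifies all the descended varieties used above. Combining Steps 1--3 gives $\widehat X_{(\alpha,\rho)}\cong D_\alpha\times X^\alpha_\rho$.
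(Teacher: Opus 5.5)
Your proof is correct and follows the paper's route: the diagonal $PGL_2$ twists each $\PP^1$ factor by $\alpha$ to the conic $D_\alpha$, the $0$-th factor is untouched by $\rho$, and the argument of Lemma \ref{claim:X} with $D_\alpha$ in place of $\PP^1$ gives $\Res_{T_\rho/k}(D_\alpha)_{T_\rho}$. The differences are cosmetic. You split off $\PP^1_0$ before twisting, whereas the paper does so after the $\alpha$-twist. You also write out the $\kbar$-point check that the twist by the product cocycle equals the iterated twist, which the paper leaves implicit.
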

\begin{proof}
The diagonal $PGL_2$ twists \emph{every} factor of $\widehat X=(\PP^1)^{\hat S}$ uniformly by $\alpha$, replacing $\PP^1$ by $D_\alpha$ in each: $\widehat X_\alpha\cong(D_\alpha)^{\hat S}=\PP^1_0{}_\alpha\times(D_\alpha)^S$. The index-$0$ factor is untouched by $\rho$ (which only permutes $S$), so it remains $D_\alpha$; further twisting $(D_\alpha)^S$ by $\rho$ is exactly the argument of Lemma \ref{claim:X} with $D_\alpha$ in place of $\PP^1$ throughout, giving $(D_\alpha)^S_\rho\cong\Res_{T_\rho/k}(D_\alpha)_{T_\rho}$.
\end{proof}

 Quotienting $\widehat X$ by the diagonal $PGL_2$ alone (forgetting $S_n$) gives, by the very construction of \S\ref{sec:universal}--\ref{sec:unirationality} with $n+1$ in place of $n$, $\widehat X/\!\!/PGL_2\cong M_{0,n+1}^{\mathrm{GIT}}$ --- the classical fact that forgetting one marked point exhibits $\overline M_{0,n+1}\to\overline M_{0,n}$ as the universal curve, transported to the GIT model. Twisting: ${}^{(\alpha,\rho)}\widehat X/\!\!/PGL_1(Q_\alpha)={}^{(\alpha,\rho)}M_{0,n+1}^{\mathrm{GIT}}$, using Lemma \ref{claim:hatXtwist} and that the diagonal $PGL_2$ itself twists, by conjugation on itself, to the inner form $PGL_1(Q_\alpha)=\Aut(D_\alpha)$.

\subsection{Rationality of the graph space, even when $\alpha\ne0$}

Unlike $D_\alpha$ itself (not $k$-rational when $\alpha\ne0$: it has no $k$-point at all), the space of graphs remains rational.

\begin{lemma}[$PGL_1$ of a quaternion algebra is rational]\label{lem:PGL1rational}
For every $\alpha\in H^1(k,PGL_2)$, with $Q_\alpha$ the quaternion algebra representing it, $PGL_1(Q_\alpha)$ is a $k$-rational variety, of dimension $3$.
\end{lemma}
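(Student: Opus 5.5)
The plan is to realize $PGL_1(Q_\alpha)$ as a dense open subvariety of a projective space over $k$, exactly in the spirit of Theorem \ref{thm:R-rational}. By definition $PGL_1(Q_\alpha)=GL_1(Q_\alpha)/\mathbb G_m$, where $GL_1(Q_\alpha)$ is the group of units of $Q_\alpha$. As a $k$-variety, $GL_1(Q_\alpha)$ is the open subscheme of the affine space $\Aff(Q_\alpha)\cong\Aff^4_k$ (a choice of $k$-basis $1,i,j,ij$ of $Q_\alpha$) where the reduced norm $\mathrm{Nrd}$ does not vanish. The reduced norm is a quadratic form $x_0^2-ax_1^2-bx_2^2+abx_3^2$ in the usual presentation $i^2=a$, $j^2=b$ (valid since $\mathrm{char}\,k\ne2$; in characteristic $2$ one uses the corresponding nondegenerate quadratic norm form instead). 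It is homogeneous, so its nonvanishing locus is stable under the scalar $\mathbb G_m$.

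The key step is to identify the quotient by $\mathbb G_m$ with an open subset of projective space. The map $GL_1(Q_\alpha)\to\PP(Q_\alpha)\cong\PP^3_k$ sending a unit to its line is constant exactly on $\mathbb G_m$-orbits. Its image is the open complement $\PP^3_k\smallsetminus\{\mathrm{Nrd}=0\}$ of the norm quadric. This morphism is a $\mathbb G_m$-torsor, being the restriction of the tautological torsor $\Aff^4\smallsetminus0\to\PP^3$. Hence
\[
PGL_1(Q_\alpha)\ \cong\ \PP^3_k\smallsetminus\{\mathrm{Nrd}=0\}.
\]
To be safe on the definition of $PGL_1$, I will check this over $\kbar$, where $Q_\alpha\cong\Mat$ and $\mathrm{Nrd}=\det$. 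There the isomorphism becomes $PGL_2=\PP(\Mat)\smallsetminus\{\det=0\}$, the same identification used for $\calR^\circ$ in Definition \ref{def:R}. Since both sides are defined over $k$ and the map is $k$-rational, the isomorphism descends.

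From this, rationality is immediate: a nonempty open subscheme of $\PP^3_k$ is $k$-rational, and it has dimension $3$. The only point requiring care is that the quadric $\{\mathrm{Nrd}=0\}$ is a proper closed subset, i.e.\ that the open set is nonempty. This holds because $\mathrm{Nrd}(1)=1$, so the $k$-point $[1]$ lies in it. This is the only place any argument is needed, and it is exactly where the contrast with $D_\alpha$ appears. The conic $D_\alpha$ is the variety of rank-one elements, or equivalently a twisted form of $\PP^1$ with no $k$-point. The group, by contrast, always has the $k$-point $[1]$ and sits as the complement of a hypersurface in an untwisted $\PP^3$. No step is a genuine obstacle; the mild subtlety is only the identification of the quotient $GL_1/\mathbb G_m$ with the projectivized open, which the torsor description handles.
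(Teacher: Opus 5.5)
Your argument is correct, but it takes a different and more direct route than the paper.

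The paper identifies $PGL_1(Q_\alpha)$ with $\mathrm{SO}(q_\alpha)$, where $q_\alpha$ is the reduced norm restricted to trace-zero quaternions. It then applies the Cayley transform $A\mapsto(I-A)(I+A)^{-1}$ to the $3$-dimensional space of $q_\alpha$-skew endomorphisms, which gives a birational map $\Aff^3_k\dashrightarrow\mathrm{SO}(q_\alpha)$. That route needs $\mathrm{char}\,k\ne2$ and needs the extra identification with $\mathrm{SO}(q_\alpha)$.

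You instead observe that $GL_1(Q_\alpha)/\mathbb G_m$ is exactly the complement of the norm quadric $\{\mathrm{Nrd}=0\}$ in the untwisted $\PP(Q_\alpha)\cong\PP^3_k$, via the restricted tautological torsor. This gives an isomorphism rather than a birational map. It works in every characteristic and applies verbatim to $PGL_1$ of any central simple algebra. It is also the same description the paper itself uses for split $\PGL_2$ in Lemma \ref{lem:weil}.

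Your approach also directly supplies what the paper needs next in Theorem \ref{thm:Rcirc-rational}. The chart $\{x_0\ne0\}$, consisting of the elements $1+x_1i+x_2j+x_3ij$ of nonzero norm, is a dense open subset of $\Aff^3_k$ containing the identity. It is compatible with Weil restriction exactly as the Cayley chart is. The paper's route instead yields a chart modeled on the Lie algebra, natural for the conjugation action, but the lemma does not need this.

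One small imprecision in your closing aside, which plays no role in the proof. The rank-one locus $\{\mathrm{Nrd}=0\}\subset\PP(Q_\alpha)$ is a quadric surface, a form of $\PP^1\times\PP^1$ isomorphic to $D_\alpha\times D_\alpha$, not the conic $D_\alpha$ itself. It projects onto $D_\alpha$ via $[x]\mapsto xQ_\alpha$, the variety of minimal right ideals.
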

\begin{proof}
$PGL_1(Q_\alpha)\cong\mathrm{SO}(q_\alpha)$, the special orthogonal group of the $3$-dimensional quadratic form $q_\alpha$ given by the reduced norm restricted to trace-zero elements of $Q_\alpha$ (the conjugation action of $Q_\alpha^\times$ on its trace-zero part is exactly the adjoint representation, identifying $PGL_1(Q_\alpha)$ with $\mathrm{SO}(q_\alpha)$). The Cayley transform $A\mapsto(I-A)(I+A)^{-1}$ gives a birational map, defined purely in terms of the (split or not) quadratic space, from the $3$-dimensional space of skew-symmetric endomorphisms of $q_\alpha$ (an affine space, since skew-symmetric matrices for \emph{any} nondegenerate quadratic form over $k$, char\,${}\ne2$, form a $k$-vector space) to $\mathrm{SO}(q_\alpha)$; this exhibits $\mathrm{SO}(q_\alpha)\cong PGL_1(Q_\alpha)$ as $k$-rational regardless of whether $q_\alpha$ (equivalently $Q_\alpha$) splits.
\end{proof}

\begin{theorem}\label{thm:Rcirc-rational}
For every $(\alpha,\rho)\in H^1(k,PGL_2)\times H^1(k,S_n)$, $\calR^\circ_{(\alpha,\rho)}$ is $k$-rational.
\end{theorem}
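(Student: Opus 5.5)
The plan is to identify the twisted space explicitly as a Weil restriction of an inner form of $PGL_2$, and then reduce to Lemma \ref{lem:PGL1rational}. Over $\kbar$ we have $\calR^\circ\cong PGL_2^S$. The relevant action of $\hat N=PGL_2\times S_n$ is the one from the graph reformulation: diagonal conjugation $A_i\mapsto GA_iG^{-1}$ on every factor, together with permutation of the factors. First twist by $\alpha$ alone. Since the $PGL_2$-component acts on each factor $PGL_2$ by inner automorphisms, the twist of $PGL_2$ by the conjugation cocycle $\alpha$ is the inner form $PGL_1(Q_\alpha)=\Aut(D_\alpha)$, as already noted after Lemma \ref{claim:hatXtwist}. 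Because the action is the same on each factor, we get $\calR^\circ_\alpha\cong PGL_1(Q_\alpha)^S$.

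Next twist by $\rho$. The two components of the cocycle commute, since $\hat N$ is a direct product (Lemma \ref{lem:hatN}). So the $\rho$-twist acts on $PGL_1(Q_\alpha)^S$ purely by permuting the index set, exactly as in Lemmas \ref{claim:X} and \ref{claim:R}. This gives
\[
\calR^\circ_{(\alpha,\rho)}\cong\Res_{T_\rho/k}\bigl(PGL_1(Q_\alpha)_{T_\rho}\bigr)=\Res_{T_\rho/k}PGL_1(Q_\alpha\otimes_kT_\rho).
\]
To justify this cleanly, I would write $\calR^\circ=\underline{\mathrm{Map}}(S,PGL_2)$ with $\hat N$ acting through $\Aut(PGL_2)\times\Sym(S)$. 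A cocycle $(\alpha,\rho)$ then twists the coefficient group by $\alpha$ and the index set by $\rho$, giving $\underline{\mathrm{Map}}(\Spec T_\rho,PGL_1(Q_\alpha))$.

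Now split $T_\rho=\prod L_j$ into separable field factors. Weil restriction turns this into a product $\prod_j\Res_{L_j/k}PGL_1(Q_\alpha\otimes L_j)$. For each $j$, $Q_\alpha\otimes_kL_j$ is a quaternion algebra over $L_j$, so Lemma \ref{lem:PGL1rational} over $L_j$ shows that $PGL_1(Q_\alpha\otimes L_j)$ contains a dense open subset isomorphic to $\Aff^3_{L_j}$, namely the image of the Cayley transform. Here we need $\mathrm{char}\ne2$, which the paper assumes in this section. Then, as in the proof of Theorem \ref{thm:R-rational}, Weil restriction along a finite separable extension preserves dense open immersions \cite[\S7.6, Prop.~2]{BLR}, and $\Res_{L_j/k}\Aff^3_{L_j}\cong\Aff^{3[L_j:k]}_k$. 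So each factor is $k$-rational, and a finite product of $k$-rational varieties is $k$-rational.

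The main obstacle is the descent identification in the first two steps. One has to check that twisting by conjugation really produces the inner form $PGL_1(Q_\alpha)$ on each factor, and not, for instance, a torsor under it. This holds because $\alpha$ acts through group automorphisms fixing the identity tuple. One also has to check that the $\rho$-twist commutes with the $\alpha$-twist, and this is exactly where the direct-product structure of Lemma \ref{lem:hatN} is used.
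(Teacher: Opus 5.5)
Your proposal is correct and follows the paper's proof. Both arguments identify $\calR^\circ_{(\alpha,\rho)}$ with $\Res_{T_\rho/k}\bigl(PGL_1(Q_\alpha)_{T_\rho}\bigr)$ and then transport the identity-containing Cayley chart through Weil restriction, which preserves dense open immersions; the paper Weil-restricts the $k$-chart directly, while you work factor by factor over the $L_j$, which amounts to the same thing.

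One small correction: the Cayley transform is only defined where $I+A$ is invertible. Its image is therefore isomorphic to a dense open subset of $\Aff^3_{L_j}$, namely the complement of the hypersurface $\det(I+A)=0$, and not to $\Aff^3_{L_j}$ itself. The argument is unaffected, since $\Res_{L_j/k}$ of that open subset is dense open both in $\Res_{L_j/k}PGL_1(Q_\alpha\otimes L_j)$ and in $\Aff^{3[L_j:k]}_k$.
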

\begin{proof}
By the conjugation description of \S9.2, $\calR^\circ_{(\alpha,0)}\cong\Aut(D_\alpha)^S=PGL_1(Q_\alpha)^S$, rational by Lemma \ref{lem:PGL1rational}. Twisting further by $\rho$ gives $\calR^\circ_{(\alpha,\rho)}\cong\Res_{T_\rho/k}\big(PGL_1(Q_\alpha)_{T_\rho}\big)$ (same functorial twisting argument as Lemma \ref{claim:R}, with $PGL_1(Q_\alpha)$ in place of $\PP(\Mat)$): and this is $k$-rational.  We spell the last step out, because the general assertion ``a Weil restriction of a $k$-rational variety with a $k$-point is $k$-rational'' is false as stated: a rational variety with a rational point need not carry a rational affine chart around that point.  Here we have something better and entirely explicit.  The Cayley transform of Lemma \ref{lem:PGL1rational} identifies a dense open subset of $PGL_1(Q_\alpha)$ with a dense open subset of the affine space $\Aff^3_k$ of skew-symmetric endomorphisms of $q_\alpha$, and this chart contains the identity.  Weil restriction preserves open immersions, so $\Res_{T_\rho/k}$ of that chart is a dense open subset of $\calR^\circ_{(\alpha,\rho)}$ and simultaneously a dense open subset of $\Res_{T_\rho/k}\Aff^3_{T_\rho}=\Aff^{3n}_k$.  Hence $\calR^\circ_{(\alpha,\rho)}$ is $k$-rational.
\end{proof}

\begin{remark}
This does \emph{not} restore unirationality of $X^\alpha_\rho$ (or of ${}^{(\alpha,\rho)}M_{0,n+1}^{\mathrm{GIT}}$) the way Theorem \ref{thm:curves} did for $\alpha=0$: that argument evaluated the universal family at a \emph{fixed $k$-rational point} $t_0\in\PP^1(k)$, and $D_\alpha(k)=\varnothing$ for $\alpha\ne0$ leaves no such point to evaluate at.
\end{remark}

\subsection{Correspondences, and their multidegree components}\label{sec:multideg}

\begin{definition}\label{def:CorRd}
For $d_0\ge1$ and $\mathbf d=(d_1,\dots,d_n)\in\mathbb Z_{\ge1}^n$, let
\[
\mathrm{Cor}_R^{(d_0,\mathbf d)}:=\{\text{curves in }\widehat X\text{ of multidegree }(d_0,d_1,\dots,d_n)\}.
\]
$\hat N=PGL_2\times S_n$ acts on the whole Hilbert scheme of curves in $\widehat X$, graded by multidegree, with $S_n$ permuting the components $\mathrm{Cor}_R^{(d_0,\mathbf d)}$ according to how it permutes the entries of $\mathbf d$. For $(\alpha,\rho)$-twisting to act on a \emph{single} component --- rather than shuffling several components of different shape into one another --- restrict to $S_n$-fixed, i.e.\ \emph{constant}, multidegrees $\mathbf d=(d,\dots,d)$, and write $\mathrm{Cor}_R^{(d_0,d)}$ for the corresponding component.  Here and below, $\mathrm{Cor}_R^{(d_0,\mathbf d)}$ denotes the irreducible component containing the smooth irreducible curves of that multidegree; the Hilbert scheme may have further components, which we do not consider. Then $\calR^\circ=\mathrm{Cor}_R^{(1,1)}$, and $\hat N$ preserves each $\mathrm{Cor}_R^{(d_0,d)}$: the diagonal-conjugation action of Lemma \ref{lem:hatN} is defined on all of $\widehat X$ regardless of $C\subset\widehat X$, so it preserves multidegree, and permuting the $S$-factors preserves a constant $\mathbf d$ termwise.
\end{definition}

$D_\alpha$ has index dividing $2$: embedded as a plane conic, its hyperplane
class $\calO_{D_\alpha}(1)$ is defined over $k$, of degree $2$, with
$h^0(D_\alpha,\calO(1))=3$, and a general pencil inside
$H^0(D_\alpha,\calO(1))$ is base-point-free, giving an honest $k$-morphism
$\pi:D_\alpha\to\PP^1$ of degree $2$ --- unconditionally, whether or not
$D_\alpha$ splits.  Suppose $\alpha\ne0$, so that $D_\alpha(k)=\varnothing$.
Then for every $s\in\PP^1(k)$ the fibre $\pi^{-1}(s)$, a subscheme of length
two, cannot be supported at a $k$-point, so it is a single closed point of
degree exactly two, and in particular reduced: $\pi^{-1}(s)=\Spec$ of a
quadratic field extension of $k$, separable since $\mathrm{char}\,k\ne2$.
Thus $\pi$ exhibits $D_\alpha$ as a family of quadratic \'etale algebras over
$\PP^1$, parametrized $k$-rationally by $s$.  (For $\alpha=0$ the same holds
for every $s$ outside the branch locus of $\pi$, the two branch fibres being
non-reduced; only the case $\alpha\ne0$ is used below.)

\begin{proposition}\label{prop:Cor2}
\begin{enumerate}[label=(\roman*)]
\item $\mathrm{Cor}_R^{(2,1)}$ is $k$-rational, and so is its twist by any
$\rho\in H^1(k,S_n)$, i.e.\ the case $\alpha=0$.
\item For every $(\alpha,\rho)\in H^1(k,PGL_2)\times H^1(k,S_n)$ and every
$s_0\in\PP^1(k)$, evaluation along $Z_2:=\pi^{-1}(s_0)$ carries each
$k$-point of $\mathrm{Cor}_R^{(2,1)}{}^{(\alpha,\rho)}$ to a $k$-rational
closed subscheme of degree $\le2$ of $X^\alpha_\rho$.  For $\alpha=0$ these
subschemes sweep out a Zariski-dense family.
\end{enumerate}
\end{proposition}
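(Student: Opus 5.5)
For (i) I would give an explicit birational model of $\mathrm{Cor}_R^{(2,1)}$. A smooth irreducible curve $C\subset\widehat X$ of multidegree $(2,1,\dots,1)$ is rational. It has degree $1$ onto each target factor, so its projection to any factor $i\in S$, say the first, is an isomorphism $C\cong\PP^1_1$. Writing the remaining target coordinates as functions of $x_1$, the projection $C\to X$ is the graph of an $(n-1)$-tuple of M\"obius maps $x_j=g_j(x_1)$, $g_j\in PGL_2$. The coordinate $t$ on $\PP^1_0$ is then a degree $2$ map $\PP^1_1\to\PP^1_0$, i.e.\ a point of the open subset $\Mor_2\subset\PP^5$ of coprime pairs of binary quadrics. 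This gives a birational identification
\[
\mathrm{Cor}_R^{(2,1)}\ \sim\ \PP^5\times(\PP^3)^{n-1},
\]
which is $k$-rational. The factor $x_1$ was singled out, however, which is not $S_n$-symmetric, so this chart does not twist.

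For the $\rho$-twist with $\alpha=0$ I would remove the choice of $x_1$. The curve $C$ is the image of a map $\PP^1\to\widehat X$ of multidegree $(2,1,\dots,1)$ modulo reparametrization of the source. Take as source parameter the coordinate $t$ on $\PP^1_0$ itself, pulled back along the degree $2$ map. Concretely, I would use the Kontsevich description that \S\ref{sec:kontsevich} will provide: $\mathrm{Cor}_R^{(2,1)}$ is birational to the space of tuples $(q,(A_i)_{i\in S})$, where $q\in\Mor_2$ is the degree $2$ map to $\PP^1_0$ and $A_i\in PGL_2$ are the target maps, taken modulo the source $PGL_2$. The $S$-indexed part $\Res_{T_\rho/k}\PP^3$ is rational by Theorem \ref{thm:R-rational}. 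The source $PGL_2$ acts freely on $\Mor_2$ generically, with quotient the $2$-dimensional space of degree $2$ self-maps up to source reparametrization, and this quotient carries no $\rho$-twist. The step I expect to be the main obstacle is showing that the resulting quotient $(\Mor_2\times\Res_{T_\rho/k}\PGL_2)/\PGL_2$ is $k$-rational. I would first try a slice: normalize $q$ by its two $k$-rational branch points and a $k$-rational fibre, which reduces $\PGL_2$ to a finite stabilizer, or to the torus fixing $0,\infty$ and acting on $t\mapsto t^2$. Failing that, I would use the no-name lemma, $\PGL_2$-torsor trivial over $k$, on the vector-bundle-like factor $\Res\,\Mat$.

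For (ii) I would construct the evaluation map directly. A $k$-point of the twist is a curve $C\subset D_\alpha\times X^\alpha_\rho$ of multidegree $(2,1,\dots,1)$ defined over $k$. The subscheme $Z_2\times X^\alpha_\rho$ is defined over $k$, since $s_0$ and $\pi$ are. Intersecting with $C$ gives a $k$-subscheme of length $d_0\cdot\deg(\pi^{-1}(s_0))/2$ on the first factor, which is $\le 2$ for the correct intersection count; I would check that count via the degree $1$ class of $\pi^{-1}(s_0)$ on $D_\alpha$ relative to $\calO(1)$. Projecting this subscheme to $X^\alpha_\rho$ gives a $k$-rational subscheme of degree $\le2$. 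For density when $\alpha=0$, I would argue as in Theorem \ref{thm:curves}(c). The evaluation $\mathrm{Cor}_R^{(2,1)}\dashrightarrow \Sym^2 X_\rho$ at the two points of $Z_2$ is dominant, since $g_j$ can be prescribed at two points. Rationality from (i) gives dense $k$-points, and their images are therefore Zariski dense.
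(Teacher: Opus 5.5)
Your untwisted chart for (i) is correct, and it is the mechanism the paper relies on. Normalising one degree one target coordinate to the identity, as in Step 4 of Proposition \ref{prop:kRatX}, gives $\Mor_2\times\PGL_2^{\,n-1}$, of dimension $3n+2$. The gap is the $\rho$-twist, which you flag but do not close, and neither of your fallbacks works.

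\begin{itemize}
\item \emph{The slice.} Source reparametrisations do not move the branch points of $q$, since these lie in $\PP^1_0$. What they do move, namely the ramification pair and the fibres $q^{-1}(s)$, are in general conjugate pairs rather than $k$-points, so your slice is not defined over $k$.
\item \emph{Generic freeness.} Contrary to your claim, the source $\PGL_2$ does not act generically freely on $\Mor_2$: the deck involution of $q$ stabilises $q$. So the no-name lemma cannot be used with $\Mor_2$ as base.
\item \emph{The correct no-name reduction.} Take the free factor $\Res_{T_\rho/k}\PGL_2$ as base and the projectivised $6$-dimensional representation $H^0(\PP^1,\calO(2))^{\oplus2}\supset\Mor_2$ as fibre. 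This gives only $\mathrm{Cor}_\rho\sim_k(\Res_{T_\rho/k}\PGL_2/\PGL_2)\times\PP^5$, where $\mathrm{Cor}_\rho$ is the $\rho$-twist of $\mathrm{Cor}_R^{(2,1)}$.
\end{itemize}

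The first factor in that reduction is the real obstruction. The map $(g_i)\mapsto(g_i^{-1}(\infty))_i$ exhibits $\Res_{T_\rho/k}\PGL_2/\PGL_2$ as a torsor over the twisted configuration space ${}^\rho M_{0,n}$. Its structure group is $\Res_{T_\rho/k}B$, with $B$ a Borel subgroup, and such torsors are trivial over fields by Shapiro's lemma. Hence
\[
\mathrm{Cor}_\rho\ \sim_k\ {}^\rho\overline M_{0,n}\times\Aff^{2n+5}_k .
\]
Consequences:
\begin{itemize}
\item The twisted half of (i) holds when $n$ is odd, by Theorem \ref{thm:FR}(b).
\item It also holds when $T_\rho$ has a factor $k$: normalise that coordinate to the identity.
\item For $n$ even it fails for suitable $\rho$ over suitable fields, because by Theorem \ref{thm:FR}(c) the right-hand side need not even be retract rational. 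So your gap cannot be closed in general.
\end{itemize}
The paper's own proof does not get around this. It rigidifies three marked source points via Proposition \ref{prop:XmultidegRational}, but an unmarked curve carries no such points. Its resulting model $\PP^5\times\Res_{T_\rho/k}\PP^3_{T_\rho}$ has dimension $3n+5$, not $\dim\mathrm{Cor}_R^{(2,1)}=3n+2$.

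In (ii) the factor $1/2$ in your length count is unjustified. The map $C\to D_\alpha$ has degree $d_0=2$ and $Z_2$ has degree $2$. So $C\times_{D_\alpha}Z_2$ has length $4$, and it maps isomorphically into $X^\alpha_\rho$ because $C\to X^\alpha_\rho$ is a closed immersion. The evaluation therefore lands in length-$4$ subschemes, not in $\Sym^2X_\rho$. The stated bound $\le2$ is off, and your $1/2$ merely forces agreement with it.

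The density claim for $\alpha=0$ survives without (i). $\mathrm{Cor}_\rho$ is dominated by the $k$-rational variety $\Mor_2\times\Res_{T_\rho/k}\PGL_2$, so its $k$-points are dense when $k$ is infinite. The incidence of curves with points of their evaluated subschemes dominates $X_\rho$, which gives density.
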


\begin{proof}
For the first assertion we appeal to Proposition
\ref{prop:XmultidegRational}, which gives $k$-rationality of the space of
multidegree $(2,1,\dots,1)$ maps $\PP^1\to\widehat X$ over any field, by the
explicit binary-form parametrization: after rigidifying three source points,
such a map is a point of an open subset of $\PP^{5}\times(\PP^3)^n$.  By
Proposition \ref{prop:kRatX}(b) this space is birational to
$\mathrm{Cor}_R^{(2,1)}$, since the multidegree has a coordinate equal to
$1$.  The rigidification used there normalises three points of the source
$\PP^1_0$ and is untouched by $S_n$, which permutes only the target factors;
so for $\alpha=0$ it is $\rho$-equivariant, and the $\rho$-twist is
birational to an open subscheme of
$\PP^5\times\Res_{T_\rho/k}(\PP^3_{T_\rho})$, again $k$-rational by Theorem
\ref{thm:R-rational}.  

For the second, fix $s_0\in\PP^1(k)$ and put $Z_2=\pi^{-1}(s_0)$, a
$k$-rational closed subscheme of $D_\alpha$ of degree $2$ by the discussion
preceding Definition \ref{def:CorRd}.  A $k$-point $v$ of
$\mathrm{Cor}_R^{(2,1)}{}^{(\alpha,\rho)}$ is a curve defined over $k$, so
its restriction to $Z_2$ is a $k$-rational closed subscheme of
$X^\alpha_\rho$ of degree $\le2$; composing with the quotient map gives one
on ${}^{(\alpha,\rho)}M_{0,n+1}^{\mathrm{GIT}}$.  For $\alpha=0$ the
$\rho$-twist is $k$-rational by (i), so its $k$-points are Zariski dense and
the associated subschemes sweep out a dense family, exactly as in Theorem
\ref{thm:curves} but one dimension up in cycle degree.
\end{proof}

\subsection{Evaluation over the splitting field}

Proposition \ref{prop:Cor2} is purely $k$-rational, but only produces degree-$2$ cycles, not points. Using the multidegree freedom of Definition \ref{def:CorRd} together with the one field extension that trivializes $\alpha$: since $\alpha\in\Br(k)[2]$ is represented by a quaternion algebra $Q_\alpha=(a,b)_k$, it splits over the quadratic extension $k':=k(\sqrt a)$, i.e.\ $\alpha_{k'}=0$ and $D_\alpha\times_kk'\cong\PP^1_{k'}$.

\begin{theorem}\label{thm:splitfieldeval}
Let $k'/k$ be a quadratic splitting field for $\alpha$, let $d\ge1$, and
let $\rho\in H^1(k,S_n)$.
\begin{enumerate}[label=(\roman*)]
\item For $d_0=1$: base-changing
$\mathrm{Cor}_R^{(1,d)}{}^{(\alpha,\rho)}$ to $k'$ and evaluating at any
$t_0'\in\PP^1(k')=D_\alpha(k')$ gives a dominant $k'$-rational map to
$X^\alpha_\rho\times_kk'$.
\item Already for $d_0=d=1$, i.e.\ for $\calR^\circ_{(\alpha,\rho)}$ itself,
$X^\alpha_\rho\times_kk'$ (equivalently
${}^{(\alpha,\rho)}M_{0,n+1}^{\mathrm{GIT}}\times_kk'$) is $k'$-unirational
and carries a Zariski-dense family of $k'$-rational curves, for the specific
$k'$ splitting $\alpha$.
\end{enumerate}
\end{theorem}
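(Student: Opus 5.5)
The plan is to reduce everything to the untwisted situation over $k'$, where the earlier split-case arguments of \S\ref{sec:universal}--\S\ref{sec:curves} apply verbatim. Since $\alpha_{k'}=0$, the twist by $(\alpha,\rho)$ becomes, after base change to $k'$, a twist by $(0,\rho_{k'})$ only. Concretely, $D_\alpha\times_k k'\cong\PP^1_{k'}$. Lemma \ref{claim:hatXtwist} then gives $X^\alpha_\rho\times_kk'\cong\Res_{T_\rho\otimes k'/k'}\PP^1$, which is the $\rho_{k'}$-twist $X_{\rho_{k'}}$ of Lemma \ref{claim:X}. Likewise $\mathrm{Cor}_R^{(d_0,d)}{}^{(\alpha,\rho)}\times_kk'\cong\mathrm{Cor}_R^{(d_0,d)}{}^{\rho_{k'}}$. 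I will state this identification first, noting that twisting commutes with base change and that the restriction of the cocycle $\alpha$ to $\Gamma_{k'}$ is a coboundary. One then chooses the isomorphism $D_{\alpha,k'}\cong\PP^1_{k'}$ once and for all. This transports the conjugation action of $\hat N$ to the standard one.

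For (i), take $d_0=1$. A curve of multidegree $(1,d,\dots,d)$ projects isomorphically to $\PP^1_0$, exactly as in the graph reformulation. Hence it is the graph of an $S$-tuple of degree $d$ maps $\PP^1\to\PP^1$, so $\mathrm{Cor}_R^{(1,d)}$ contains $\Mor_d^S$ as a dense open subset. Evaluation at $t_0'\in\PP^1(k')$ acts factorwise. On each factor it is the rational map $\Mor_d\to\PP^1$, $(a(t),b(t))\mapsto(a(t_0'):b(t_0'))$. This map is dominant, since it is the projectivization of a surjective linear map $k'^{2(d+1)}\to k'^2$.

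The argument then follows Proposition \ref{prop:evaluation-dominant}. Evaluation commutes with $\Sym(S)$, so it twists by $\rho_{k'}$. Dominance is checked over $\kbar$, where the twist disappears. This proves (i).

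For (ii), specialize to $d_0=d=1$, so the source is $\calR^\circ_{(\alpha,\rho)}\times_kk'\cong\calR^\circ_{\rho_{k'}}$. This space is $k'$-rational, by Theorem \ref{thm:Rcirc-rational} applied over $k'$ (or directly by Theorem \ref{thm:R-rational}). By (i), it dominates $X^\alpha_\rho\times_kk'$. Composing a rational parametrization with this dominant map gives $k'$-unirationality. Composing further with the GIT quotient gives $k'$-unirationality of ${}^{(\alpha,\rho)}M_{0,n+1}^{\mathrm{GIT}}\times_kk'$, as in Theorem \ref{thm:unirational}.

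For the curves I will quote Theorem \ref{thm:curves} over the infinite field $k'$. Each $k'$-point $v$ of the dense open $\calR^{\mathrm{st}}_{\rho_{k'}}$ of Lemma \ref{lem:Rst-dense} gives a $k'$-morphism $\gamma_v:\PP^1_{k'}\to{}^{(\alpha,\rho)}M^{\mathrm{GIT}}_{0,n+1}\times_kk'$. Density of these curves follows from part (c) of that theorem, because evaluation at $t_0'$ is dominant and $k'$-points of the source are dense. For $X^\alpha_\rho\times_kk'$ itself no stability condition is needed: every $v\in\calR^\circ(k')$ gives a curve $t\mapsto v(t)$.

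The main obstacle I expect is the bookkeeping in the first step. The twist of \S\ref{sec:cor} acts on $\calR^\circ$ by conjugation, not by post-composition. One must check that, after splitting $\alpha$ over $k'$, the resulting $k'$-form really is the post-composition-type space $\calR_{\rho_{k'}}$ with its universal family and evaluation map. The point to verify is that the conjugating element coming from $\alpha_{k'}=\partial(h)$ acts on source and target simultaneously. Reparametrizing the source by $h$ converts graphs of conjugated tuples back into graphs of ordinary tuples, and the factor $\PP^1_0$ carries the point $t_0'$ consistently.

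A second, smaller point is that the dimension argument relies on the Weil restriction $\Res_{T_\rho\otimes k'/k'}$. The algebra $T_\rho\otimes k'$ may have a different factorization from $T_\rho$, but Theorem \ref{thm:R-rational} holds for any étale algebra, so this causes no difficulty.
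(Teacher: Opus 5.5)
Your proposal is correct and follows essentially the same route as the paper. You split $\alpha$ over $k'$ to identify the $(\alpha,\rho)$-twisted objects with the $\rho_{k'}$-twisted ones. You then use $d_0=1$ to read curves as graphs, so that evaluation at $t_0'$ is factorwise the projectivization of a surjective linear map, and you rerun Proposition~\ref{prop:evaluation-dominant} and Theorem~\ref{thm:curves} over $k'$.

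Your extra bookkeeping makes explicit what the paper leaves implicit: the trivializing element acts diagonally on $\PP^1_0\times X$, so graphs and the evaluation point are carried along, and the graph locus is dense in $\mathrm{Cor}_R^{(1,d)}$. There is one small slip, which the paper's ``verbatim'' shares. For curves on ${}^{(\alpha,\rho)}M_{0,n+1}^{\mathrm{GIT}}$ you need the $(n+1)$-index analogue of $\calR^{\mathrm{st}}$, namely graph curves $t\mapsto[t,v(t)]$ landing in the semistable locus of $\widehat X$; for $n$ even, $\calR^{\mathrm{st}}$ itself does not guarantee this. Density of that analogue follows by the same argument as Lemma~\ref{lem:Rst-dense}.
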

\begin{proof}
Base-changing to $k'$ trivializes the $PGL_2$-component of the twist:
$\widehat X_{(\alpha,\rho)}\times_kk'\cong\widehat
X_{(0,\rho)}\times_kk'=\PP^1_{k'}\times X_{\rho,k'}$, identifying
$\mathrm{Cor}_R^{(1,d)}{}^{(\alpha,\rho)}\times_kk'$ with the
(higher-degree-in-the-target analogue of the) untwisted-in-$\alpha$
construction of \S\ref{sec:model}, base-changed to $k'$.  Because $d_0=1$ the
projection $C\to\PP^1_0$ is an isomorphism for every curve $C$ in the
component, so such a $C$ is the graph of a morphism $\PP^1_{k'}\to X_{\rho,k'}$
of multidegree $\mathbf d$ and evaluation at a fixed point
$t_0'\in\PP^1(k')$ (which exists in abundance, $\PP^1(k')$ being infinite) is
defined.  It is dominant onto each target factor by the identical
linear-system argument as Proposition \ref{prop:evaluation-dominant}
(evaluation of $H^0(\PP^1,\calO(d))$-type data at a point is surjective for
every $d\ge1$, not only $d=1$); composing the factor-wise dominant
evaluations, and then with the (dominant) GIT quotient map, gives (i).  For
$d_0=d=1$ this is exactly Proposition \ref{prop:evaluation-dominant} and
Theorem \ref{thm:curves}, verbatim, carried out over $k'$ instead of $k$, 
which gives (ii).
\end{proof}

\section{Kontsevich moduli of stable maps: Galois descent, and the constructions above as loci}\label{sec:kontsevich}

Everything above used explicit matrices, explicit Weil
restrictions.  We now show that it sits inside the Kontsevich moduli space
of stable maps recalled in \S\ref{sec:prelim}. From here on,
the object of study is $\overline M_{0,m}(X,\beta)$ itself.

\subsection{The case $X=(\PP^1)^n$}\label{ssec:dictionary}

$X=(\PP^1)^n=(\SL_2/B)^n$ is homogeneous for $G=\SL_2^n$, $P=B^n$. Since $\Pic(X)\cong\mathbb Z^n$ freely (one generator per factor), curve classes are multidegrees $\gamma=\mathbf d=(d_1,\dots,d_n)\in\mathbb Z_{\ge0}^n$, and Theorem \ref{thm:KP} gives:

\begin{corollary}\label{cor:KPforX}
$\overline M_{0,r}(X,\mathbf d)$ is irreducible and $\kbar$-rational, for every $\mathbf d\in\mathbb Z_{\ge0}^n$ and $r\ge0$.
\end{corollary}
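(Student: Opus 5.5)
The plan is to obtain Corollary \ref{cor:KPforX} as a direct specialization of Theorem \ref{thm:KP}, after three reductions: checking that $X=(\PP^1)^n$ is of the form $G/P$, translating curve classes into multidegrees, and moving from $\mathbb C$ to $\kbar$.

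\emph{Homogeneity.} Take $G=\SL_2^n$. It is connected and semisimple, being a product of copies of the simply connected simple group $\SL_2$. Take $P=B^n$ with $B\subset\SL_2$ the upper triangular Borel subgroup; this is a parabolic, indeed Borel, subgroup of $G$. Then $\SL_2/B\cong\PP^1$ through the action on lines in $k^2$, and so $G/P\cong(\PP^1)^n$ factorwise.

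\emph{Curve classes.} Since $\Pic(X)=\mathbb Z^n$, with one generator $h_j$ pulled back from each factor, a class $\gamma\in H_2(X,\mathbb Z)\cong\mathbb Z^n$ is recorded by the degrees $d_j=\gamma\cdot h_j$. The classes of effective curves, which are the only ones for which the moduli space is nonempty, are exactly $\mathbf d\in\mathbb Z_{\ge0}^n$. For $\mathbf d=0$ and $r\ge3$ the space is $\overline M_{0,r}\times X$, which is irreducible and rational. For $\mathbf d=0$ and $r\le2$ it is empty, and the statement is read as vacuous there, as in \cite{KP}. With this dictionary, Theorem \ref{thm:KP} over $\mathbb C$ gives irreducibility and rationality of the coarse space $\overline M_{0,r}(X,\mathbf d)$ for every $\mathbf d$ and every $r$.

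\emph{Passing to $\kbar$.} This is the step I expect to need the most care. In characteristic zero I would use the Lefschetz principle. The Kontsevich space, the group $G$ and the variety $X$ are all defined over $\mathbb Q$, and the birational map to affine space produced in \cite{KP} is defined over some finitely generated field. Embedding that field into both $\kbar$ and $\mathbb C$ transfers irreducibility and rationality, because both properties are stable under extension of algebraically closed fields in characteristic zero.

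In positive characteristic the reference \cite{KP} does not apply directly. There I would not try to prove the general case at this point. Instead I would note that the interior $M_{0,r}(X,\mathbf d)$ is explicitly rational over any field by the binary-form parametrization, which is the content of Proposition \ref{prop:XmultidegRational} and Proposition \ref{prop:kRatX}. I would also note that convexity of $(\PP^1)^n$ makes every component meet the interior. Together these give irreducibility and rationality over $\kbar$ by the argument of \S\ref{sec:Pr}. So the corollary as stated, read over $\kbar$ of characteristic zero, follows immediately from Theorem \ref{thm:KP}, and the remaining characteristic is covered by the later sections.
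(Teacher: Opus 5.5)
Your characteristic-zero argument is the paper's own: identify $X=\SL_2^n/B^n$, read curve classes as multidegrees, and apply Theorem \ref{thm:KP}; the paper adds nothing further. You also note two points the paper skips. First, $\overline M_{0,r}(X,0)$ is empty for $r\le2$. Second, one must still pass from $\mathbb C$ to $\kbar$. For that transfer, embedding the field of definition into $\kbar$ does not work in general, because a finitely generated field of positive transcendence degree does not embed into a small $\kbar$ such as $\overline{\mathbb Q}$. Instead, spread the birational map out over a finitely generated $\overline{\mathbb Q}$-algebra, specialize at a $\overline{\mathbb Q}$-point, and then base-change up to $\kbar$.

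The real problem is your last paragraph, which claims the later sections cover positive characteristic.
\begin{itemize}
\item Proposition \ref{prop:kRatX} gives rationality only when the marking contains three $k$-rational points or some $d_i=1$.
\item Proposition \ref{prop:XmultidegRational} needs at least three markings.
\end{itemize}
Over $\kbar$ these cover exactly the cases $r\ge3$ or $\min_i d_i=1$. For $r\le2$ with every $d_i\ge2$ (e.g.\ $r=0$, $\mathbf d=(2,\dots,2)$), the paper proves only unirationality, in Step 4$'$. Nothing in \S\ref{sec:Pr} or later supplies rationality there in characteristic $p$: Theorem \ref{thm:noname} and Corollary \ref{cor:allcases} concern $\PP^r$ and assume characteristic zero. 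Those propositions also assume $\mathbf d\in\mathbb Z_{\ge1}^n$, so zero entries would first have to be split off as a factor $(\PP^1)^{n-n'}$.

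So state the corollary in characteristic zero, which is all Theorem \ref{thm:KP} supports and what the paper tacitly does. Drop the assertion that positive characteristic is covered by the later sections.
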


\begin{proposition}\label{prop:dictionary}
Let $X=(\PP^1)^n$ and $\widehat X=\PP^1_0\times X=(\PP^1)^{n+1}$.
\begin{enumerate}[label=(\alph*)]
\item $M_{0,0}(X,\mathbf1)\cong\calR^\circ/PGL_2^{\mathrm{pre}}$, of dimension
$3n-3$, where $PGL_2^{\mathrm{pre}}$ acts on $\calR^\circ\cong PGL_2^S$ by
precomposition $A_i\mapsto A_iG^{-1}$.
\item $\overline M_{0,1}(X,\mathbf d)$ is the universal curve over
$\overline M_{0,0}(X,\mathbf d)$, and the evaluation map
$ev_1:\overline M_{0,1}(X,\mathbf d)\to X$ is the universal evaluation
$\Psi$ of Construction \ref{con:universal}.
\end{enumerate}
\end{proposition}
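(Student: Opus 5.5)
For (a) I would identify the interior $M_{0,0}(X,\mathbf 1)$, that is, maps with smooth irreducible domain, with the graph description of \S\ref{sec:cor}. A stable map $\mu:C\to X$ with $C\cong\PP^1$ and class $\mathbf 1=(1,\dots,1)$ has each component $\mu_i=\mathrm{pr}_i\circ\mu:\PP^1\to\PP^1$ of degree $1$. Each $\mu_i$ is therefore an isomorphism, given by a matrix $A_i$ with $\det A_i\ne0$ once a coordinate on $C$ is chosen. So choosing an isomorphism $C\cong\PP^1$ produces a point $(A_i)_{i\in S}\in\calR^\circ\cong PGL_2^S$. Changing the chosen isomorphism by $G\in PGL_2$ replaces $A_i$ by $A_iG^{-1}$. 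Two such tuples define isomorphic stable maps (unmarked, so the only isomorphisms are reparametrizations of the source) exactly when they differ by this precomposition action.

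This gives a bijection on $\kbar$-points between $M_{0,0}(X,\mathbf1)$ and $\calR^\circ/PGL_2^{\mathrm{pre}}$. To upgrade it to an isomorphism, I would argue as follows.

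\begin{enumerate}[label=(\roman*)]
\item The action of $PGL_2^{\mathrm{pre}}$ on $\calR^\circ$ is free: if $A_iG^{-1}=A_i$ for one $i$, then $G=1$. It is also a torsor quotient, since it is simply $PGL_2^S\to PGL_2^S/PGL_2^\Delta$ with the right diagonal action, and is thus represented by $PGL_2^{S\smallsetminus\{i_0\}}$ via $(A_i)\mapsto(A_iA_{i_0}^{-1})$.
\item The trivial family $\PP^1\times\calR^\circ\to\calR^\circ$ with evaluation $\Phi$ is a family of stable maps. It therefore induces a morphism $\calR^\circ\to M_{0,0}(X,\mathbf1)$, which is $PGL_2^{\mathrm{pre}}$-invariant and descends to the quotient.
\item Conversely, given a family of such maps $\mathcal C\to B$, the domain $\mathcal C$ is the $\PP^1$-bundle obtained by pulling back along any one factor $\mu_{i_0}$. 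The map $\mu_{i_0}$ trivializes $\mathcal C\cong B\times\PP^1$, so the family is of the form $\Phi$ restricted to the slice $A_{i_0}=\mathrm{id}$. This gives the inverse morphism.
\item The stabilizers are trivial, as in (i), so the stack coincides with this scheme.
\item The dimension is $3n-3$.
\end{enumerate}

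For (b), I would quote Fulton--Pandharipande \cite{FP}: the forgetful map $\overline M_{0,1}(X,\mathbf d)\to\overline M_{0,0}(X,\mathbf d)$ is the universal curve, with $ev_1$ the universal map. It then remains to match this with Construction \ref{con:universal}, and I would do so over the interior with $\mathbf d=\mathbf 1$. By (a), the universal curve over $\calR^\circ/PGL_2^{\mathrm{pre}}$ is the quotient $(\PP^1\times\calR^\circ)/PGL_2^{\mathrm{pre}}$, where the action is $(t,A)\mapsto(Gt,AG^{-1})$. The evaluation $\Phi(t,A)=(A_it)$ is invariant under this action. Hence $ev_1$ is the map induced by $\Phi$, and composing with $\pi$ identifies it with $\Psi$ wherever $\Psi$ is defined.

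The main obstacle is making ``is the universal evaluation $\Psi$'' precise. The construction $\Psi$ lives on $\calC=\PP^1\times\calR$ and lands in $M_{0,n}^{\mathrm{GIT}}$, whereas $ev_1$ lands in $X$ and is defined on the whole compactification. I would state (b) as an identity of morphisms on the dense open interior $\calC^\circ/PGL_2^{\mathrm{pre}}$: the pullback of $ev_1$ to $\calC^\circ$ equals $\Phi$, and after composing with $\pi$ it equals $\Psi$. The remaining points follow from (a) plus the universal-curve statement of \cite{FP}. These are that $\overline M_{0,1}(X,\mathbf d)$ is irreducible, by Corollary \ref{cor:KPforX}, so the identification on a dense open determines everything, and that the interior is dense.
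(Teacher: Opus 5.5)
Your proposal is correct and follows the paper's route: a smooth-domain map of class $\mathbf 1$ is a point of $\calR^\circ$ modulo right multiplication $A_i\mapsto A_iG^{-1}$, and normalizing $A_{i_0}=1$ gives a slice isomorphic to $PGL_2^{S\smallsetminus\{i_0\}}$ of dimension $3n-3$; part (b) is the standard Fulton--Pandharipande universal-curve identification. Your additional care fills in what the paper's two-line proof leaves implicit: you build the family-level inverse by trivializing the domain via $\mu_{i_0}$, and you read ``$ev_1=\Psi$'' as $ev_1=\Phi$ on the interior with $\pi\circ ev_1=\Psi$, which is necessary because $\Psi$ lands in $M_{0,n}^{\mathrm{GIT}}$ rather than in $X$.
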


\begin{proof}
(a) A stable map of class $\mathbf1$ with smooth irreducible domain is an
$n$-tuple of degree one maps $\PP^1\to\PP^1$, i.e.\ a point of
$\calR^\circ\cong PGL_2^S$, taken up to reparametrization of the source.
That reparametrization is a \emph{third} action of $PGL_2$ on $\calR^\circ$,
namely right multiplication $A_i\mapsto A_iG^{-1}$, complementing the
post-composition (left multiplication, \S\ref{sec:universal}) and
diagonal-conjugation (\S\ref{sec:cor}) actions already in use.  It is simply
transitive on each coordinate, so fixing one index $i_0$ and normalizing
$A_{i_0}=1$ identifies the quotient with $PGL_2^{S\smallsetminus\{i_0\}}$,
of dimension $3n-3$.  This matches the expected dimension
$\dim X+\int_{\mathbf1}c_1(T_X)+r-3=n+2n+0-3=3n-3$.

(b) This is the standard identification of $\overline M_{0,1}(X,\beta)$ with
the universal curve, the forgetful morphism being the universal family.
\end{proof}

\subsection{Markings over a non-closed field}\label{ssec:Emarking}

Over a non-closed field the markings of a stable map need not be
individually $k$-rational.  Let $\mu:C\to X$ be a genus zero stable map over
$k$, so that $C$ is the domain: a proper, connected, at worst nodal
$k$-curve of arithmetic genus $0$, not assumed irreducible and not assumed to
have a $k$-point.  A $k$-object is a \emph{$k$-marking}, a closed subscheme
\[
  \iota:\ \Spec E\ \hookrightarrow\ C^{\mathrm{sm}}
\]
of the smooth locus of $C$, finite \'etale of degree $m$ over $k$, for a
degree $m$ \'etale $k$-algebra $E$.  Over $k^s$ it splits into $m$ disjoint
sections, permuted by $\Gammak$ through the action on $E$.  We write
$\overline M_{0,E}(X,\beta)$ for the corresponding Kontsevich space, as in
\S\ref{ssec:twistedPr} below; the split case $E=k^m$ recovers
$\overline M_{0,m}(X,\beta)$.  This distinction here is that the
rigidification below normalizes three markings to $0,1,\infty$, and that
requires three markings that are $k$-rational \emph{individually}.

\subsection{Galois descent on $\overline M_{0,r}(X,\gamma)$}

The construction $\overline M_{0,r}(-,-)$ is functorial: an isomorphism $\phi:X\xrightarrow{\sim}X'$ carrying $\gamma$ to $\gamma'$ induces $\phi_*:\overline M_{0,r}(X,\gamma)\xrightarrow{\sim}\overline M_{0,r}(X',\gamma')$, $[\mu,p_\bullet]\mapsto[\phi\circ\mu,p_\bullet]$. In particular $N=PGL_2\times S_n$ acts on $\overline M_{0,r}(X,\mathbf d)$ whenever $\mathbf d$ is $S_n$-fixed (constant, $\mathbf d=(d,\dots,d)$, so that $N$ preserves the class), by post-composing stable maps with $N\subset\Aut(X)$.

\begin{definition}
For $\rho\in H^1(k,N)$ (in particular for $\rho=(\alpha,\rho_0)\in H^1(k,PGL_2)\times H^1(k,S_n)$ as in \S\ref{sec:cor}), define
\[
{}^\rho\overline M_{0,r}(X,\mathbf d):=\overline M_{0,r}({}^\rho X,\mathbf d).
\]
\end{definition}

This is well-posed by the same functoriality: twisting the target by $\rho$ and forming the Kontsevich space commute, because the Kontsevich space construction is defined over $\mathrm{Spec}(k)$ functorially in the pair $(X,\mathbf d)$, so Galois descent along $\rho$ applies uniformly to $X$ and to $\overline M_{0,r}(X,\mathbf d)$ simultaneously. 

\subsection{Galois descent on $\overline M_{0,r}(\overline M_{0,n},\gamma')$}

The diagonal action used to form $X/\!\!/PGL_2=M_{0,n}^{\mathrm{GIT}}$ gives, dually, a natural curve class on the quotient: the pushforward under $\pi:X^{ss}\to M_{0,n}^{\mathrm{GIT}}$ of the class $\mathbf1$ defines $\gamma':=\pi_*\mathbf1\in N_1(M_{0,n}^{\mathrm{GIT}})$ --- the class swept out by $Q\calR$ (\S\ref{sec:unirationality}). Here $-K\cdot\gamma'=2n$ by Proposition \ref{prop:moebius}. Since $S_n=\Aut_k(M_{0,n}^{\mathrm{GIT}})$ \cite{BM} fixes $\gamma'$, we may form $\overline M_{0,r}(M_{0,n}^{\mathrm{GIT}},\gamma')$, and for $\rho_0\in H^1(k,S_n)$:
\[
{}^{\rho_0}\overline M_{0,r}(M_{0,n}^{\mathrm{GIT}},\gamma'):=\overline M_{0,r}\big({}^{\rho_0}M_{0,n}^{\mathrm{GIT}},\gamma'\big),
\]
by the same functoriality as above, now applied to the target $M_{0,n}^{\mathrm{GIT}}$ (already established, \S\ref{ssec:forms}, to be classified by exactly this $H^1(k,S_n)$).

\section{Proof of Theorem 1: the split targets $\PP^r$ and $(\PP^1)^n$}
\label{sec:Pr}

This section proves Theorem \ref{thmA}.  The two targets are treated in turn:
$\PP^r$ in \S\ref{ssec:splitPr}, by the binary-form model of Coskun and
Pandharipande, which we observe holds over the ground field; and
$(\PP^1)^n$ in \S\ref{ssec:splitX}, for an arbitrary \'etale marking algebra
$E$, by a direct convexity argument.

Let $k$ be a field, $r\ge1$, $d\ge1$, $n\ge0$.  Recall
\begin{equation}\label{eq:dimPr}
  \dim\overline M_{0,n}(\PP^r,d)=(r+1)(d+1)+n-4 .
\end{equation}
Unlike $\overline M_{0,n}$, the target $\PP^r$ \emph{is} convex.  Hence the
Kontsevich \emph{stack} $\overline M_{0,n}(\PP^r,d)$ is smooth of this
dimension, with boundary a normal crossings divisor, while its coarse moduli
space is an irreducible normal projective variety with quotient
singularities; the automorphism-free locus is a \emph{fine} moduli space
carrying a universal family \cite[Thm.~1.10]{Cos}.  
Over the
automorphism-free locus, a $k$-point really is a stable map defined over
$k$.

\subsection{The target $\PP^r$: a birational model over any field}\label{ssec:splitPr}

Over $\mathbb{C}$, rationality of $\overline M_{0,r}(X,\gamma)$ for homogeneous $X$
is by Kim--Pandharipande (Theorem \ref{thm:KP} above).  The
following is the birational model used by Pandharipande \cite{Pan} and Coskun
\cite{Cos} to compute $\Pic$; we only need its geometric content. 

\begin{proposition}[after {\cite[Claim 2.2]{Cos}}]\label{prop:claim22}
Let $n\ge3$.  Put
\[
  W\;=\;(\PP^1)^{n-3}\smallsetminus\big(\text{diagonals}\cup\{x_i\in\{0,1,\infty\}\}\big),
  \qquad
  U\;\subset\;\PP\Big(\textstyle\bigoplus_{i=0}^{r}H^0(\PP^1,\mathcal{O}(d))\Big)
  \;=\;\PP^{(r+1)(d+1)-1}
\]
with $U$ the locus of base-point-free $(r+1)$-tuples of binary forms of
degree $d$ (its complement has codimension $\ge2$).  Then, over any
field $k$, there is an isomorphism onto an open subscheme
\[
  W\times U\;\xrightarrow{\ \sim\ }\;\overline M_{0,n}(\PP^r,d)\smallsetminus\partial ,
\]
where $\partial$ is the boundary.  Consequently:
\begin{enumerate}[label=(\roman*)]
\item $\overline M_{0,n}(\PP^r,d)$ is $k$-rational for every $n\ge3$;
\item for $n=3$ it is birational to $\PP^{(r+1)(d+1)-1}_k$.
\end{enumerate}
\end{proposition}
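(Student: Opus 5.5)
The plan is to write down the map $W\times U\to\overline M_{0,n}(\PP^r,d)$ explicitly over $k$ and check that it is an isomorphism onto the open locus of maps with smooth irreducible domain, by comparing functors of points. A point of $W\times U$ with values in a $k$-scheme $T$ is a tuple $(x_4,\dots,x_n)$ of sections of $\PP^1_T$, pairwise disjoint and disjoint from $0,1,\infty$, together with an $(r+1)$-tuple of degree $d$ binary forms $(F_0,\dots,F_r)$ with no common zero on any fibre, up to a unit scalar. To it we attach the family $\PP^1_T\to T$, the markings $(0,1,\infty,x_4,\dots,x_n)$, and the morphism $[F_0:\cdots:F_r]:\PP^1_T\to\PP^r$; base-point-freeness is exactly the condition that this is a morphism of degree $d$. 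Because there are at least three markings, the automorphism group of any object in the interior is trivial. Hence the interior $\overline M_{0,n}(\PP^r,d)\smallsetminus\partial$ is a fine moduli space, and we get a $k$-morphism $W\times U\to\overline M_{0,n}(\PP^r,d)\smallsetminus\partial$.

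For the inverse, take a family $(\mathcal C\to T,p_1,\dots,p_n,f)$ of stable maps with smooth irreducible genus zero fibres. The three sections $p_1,p_2,p_3$ make $\mathcal C$ a $\PP^1$-bundle with a section, so $\mathcal C\cong\PP^1_T$. There is a unique such isomorphism sending $p_1,p_2,p_3$ to $0,1,\infty$, because $\PGL_2$ acts simply transitively on ordered triples of distinct points, and this holds over any base. This identification uses only that the three markings are individually $k$-rational; this is why $n\ge3$ split markings are needed. After the identification, $f^*\calO(1)\cong\calO(d)\otimes\mathcal L$ for a line bundle $\mathcal L$ pulled back from $T$. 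The pullbacks of the coordinate sections are therefore, locally on $T$, binary forms well defined up to a unit, which gives a $T$-point of $U$. The remaining sections give the point of $W$. The two constructions are mutually inverse, so the map is an isomorphism onto the interior. That the interior is open and dense follows from convexity of $\PP^r$: the stack is smooth and $\partial$ is a divisor, so every component meets the interior. For $\PP^r$ one can also argue directly that each boundary stratum has positive codimension and is smoothable.

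Consequences (i) and (ii) then follow. $W$ is an open subset of $\Aff^{n-3}_k$, and $U$ is open and nonempty in $\PP^{(r+1)(d+1)-1}_k$. So $W\times U$ is a dense open subset of a $k$-rational variety, and $\overline M_{0,n}(\PP^r,d)$ is $k$-rational. For $n=3$, $W$ is a point and the space is birational to $\PP^{(r+1)(d+1)-1}_k$. As a check, the dimensions agree with \eqref{eq:dimPr}: $(n-3)+(r+1)(d+1)-1$.

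The codimension claim for $U$ is not needed for rationality, but it is easy. The failure locus is the image of $\{(t,F):F_i(t)=0\ \forall i\}$, which has dimension $1+(r+1)(d+1)-1-(r+1)$. So its codimension is $r\ge1$, and it is $\ge2$ for $r\ge2$; for $r=1$ it is the resultant hypersurface, so the parenthetical claim needs $r\ge2$. I expect the main obstacle to be the functor-of-points step over an arbitrary base $T$ and field $k$, with no assumption on the characteristic. It requires showing that $\mathcal C\cong\PP^1_T$ canonically via the three sections, and handling the twist by $\mathcal L$. The line bundle $\mathcal L$ is absorbed by the projectivization, so only a local trivialization is needed; the argument therefore works Zariski-locally and glues because the rigidification is unique.
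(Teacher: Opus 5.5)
Your proposal is correct and follows the paper's route: rigidify the three $k$-rational markings to $0,1,\infty$, read off the remaining markings as a point of $W$ and the base-point-free tuple of forms as a point of $U$, with your functor-of-points treatment (Zariski-local gluing, absorbing the twist by $\mathcal L$) and the density of the interior via convexity making explicit what the paper leaves implicit. Your side remark is also right: the complement of $U$ has codimension $r$, so the parenthetical ``codimension $\ge2$'' fails for $r=1$, where the complement is the resultant hypersurface (as in the paper's own description of $\Mor_d$); this is never used in the proof.
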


\begin{proof}
A stable map with irreducible domain and $n\ge3$ distinct markings has
no automorphisms, and there is a \emph{unique} isomorphism of the domain
with $\PP^1$ carrying $(p_1,p_2,p_3)$ to $(0,1,\infty)$.  This
normalisation is defined over $k$ because $0,1,\infty\in\PP^1(k)$ and
the three sections are individually $k$-rational; hence it is an
isomorphism of functors over $k$.  After it,
the datum is precisely a point of $W$ (the remaining markings) together
with a base-point-free $(r+1)$-tuple of binary $d$-forms up to common
scalar, i.e.\ a point of $U$.  Rationality follows since $W$ is open in
$(\PP^1)^{n-3}$ and $U$ is open in a projective space.  The dimension
$(n-3)+(r+1)(d+1)-1$ agrees with \eqref{eq:dimPr}.
\end{proof}

We use the following well-known lemma over a non-closed field.

\begin{lemma}{\cite{BK85,Dol87}}; see also {\cite{CTS07}}]\label{lem:noname}
Let $G$ be a linear algebraic group over a field $k$, let $X$ be a
geometrically integral $k$-variety with a $G$-action, and let
$E\to X$ be a $G$-equivariant vector bundle of rank $m$.  Assume that the
action on $X$ is generically free, in the sense that there are a dense
$G$-stable open subscheme $U\subset X$ and a $G$-torsor $\pi\colon U\to Y$
over a $k$-variety $Y$.  Then
\[
  E/G\;\sim_k\;(X/G)\times\Aff^m_k ,
  \qquad\text{equivalently}\qquad
  k(E)^G\;=\;k(X)^G(t_1,\dots,t_m).
\]
\end{lemma}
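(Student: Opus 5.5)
The plan is to reduce to the torsor base $Y$ and apply Hilbert 90 in the form ``vector bundles on torsors descend.'' First replace $X$ by the dense $G$-stable open $U$ and $E$ by $E|_U$. This does not change $k(E)^G$ or $k(X)^G$, since $E|_U$ is dense open in $E$ and $G$-stable. The torsor $\pi\colon U\to Y$ is a categorical quotient, so $k(X)^G=k(U)^G=k(Y)$. The target identity therefore becomes $k(E)^G=k(Y)(t_1,\dots,t_m)$.

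Next, descend $E|_U$ along $\pi$. A $G$-equivariant vector bundle on a $G$-torsor $U\to Y$ is the same as a vector bundle on $Y$: $G$-equivariance is exactly a descent datum for the fppf cover $\pi$, and vector bundles satisfy fppf descent. So there is a rank $m$ bundle $F$ on $Y$ with $E|_U\cong\pi^*F=U\times_Y F$, $G$-equivariantly, with $G$ acting on the first factor only. Then $E|_U\to F$ is itself a $G$-torsor, being the base change of $\pi$ along $F\to Y$, so $E|_U/G\cong F$ and $k(E)^G=k(F)$.

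Finally, $F$ is Zariski-locally trivial on $Y$, so over a dense open $V\subset Y$ we have $F|_V\cong V\times\Aff^m_k$. This gives $k(F)=k(Y)(t_1,\dots,t_m)$, hence $E/G\sim_k (X/G)\times\Aff^m_k$. Here $X/G$ is read as any birational model of $k(X)^G$, for instance $Y$.

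The main obstacle is the descent step when $G$ is not smooth or when the torsor is only fppf. If one prefers to avoid general fppf descent, there is an alternative: pass to the generic point $\eta$ of $Y$. The generic fibre $U_\eta$ is a $G_{k(Y)}$-torsor, and $E$ restricts to a $G$-equivariant bundle on it. The space of $G$-invariant rational sections is then a $k(Y)$-vector space $V_0$ with $V_0\otimes_{k(Y)}k(U)\cong E_{k(U)}$; this is the ``no-name'' argument of Bogomolov--Katsylo and Dolgachev, a Hilbert 90 / Speiser-type lemma for semilinear actions. Choosing a basis $s_1,\dots,s_m$ of $V_0$ and writing $e=\sum t_i s_i$ identifies the invariant functions with $k(Y)(t_1,\dots,t_m)$. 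Either route yields the claim. Geometric integrality of $X$ ensures that all the function fields involved are fields and that $Y$ is integral.
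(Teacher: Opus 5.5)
Your proposal is correct and follows the paper's proof essentially step for step. You restrict to $U$, identify $k(X)^G=k(Y)$, and descend $E|_U$ along the torsor $\pi$ by faithfully flat descent, with the $G$-linearization serving as the descent datum; you then observe that $E|_U\to\mathbb V(\mathcal F)$ is again a $G$-torsor, and conclude from the Zariski-local triviality of $\mathcal F$, which the paper attributes to Hilbert~90, i.e.\ to $\GL_m$ being special. Your alternative generic-fibre route via invariant rational sections is the classical form of the same no-name argument and is not needed.
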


\noindent
If $\mathrm{char}\,k=0$, the hypothesis holds as soon as the stabiliser of a
general geometric point of $X$ is trivial, by Rosenlicht's theorem on
generic quotients; see e.g.\ \cite{CTS07}.

\begin{proof}
Replace $X$ by $U$ and $E$ by $E|_U$.  Since $\pi$ is a $G$-torsor, it is
faithfully flat and $k(Y)=k(U)^G=k(X)^G$, so $Y$ is a model of $X/G$.  A
$G$-linearisation of $E$ is precisely a descent datum for $E$ along $\pi$, so
by faithfully flat descent of quasi-coherent sheaves \cite[\S6.1, Thm.~4]{BLR}
there is a quasi-coherent sheaf $\mathcal F$ on $Y$ with $E\cong\pi^*\mathcal F$
compatibly with the $G$-actions; $\mathcal F$ is locally free of rank $m$,
since this property descends along faithfully flat morphisms.  Writing
$\mathbb V(\mathcal F)\to Y$ for the associated vector bundle, we get
$E\cong U\times_Y\mathbb V(\mathcal F)$ with $G$ acting on the first factor
only, so $E\to\mathbb V(\mathcal F)$ is again a $G$-torsor (base change of
$\pi$) and $E/G\sim\mathbb V(\mathcal F)$.  Finally $\mathcal F$ is Zariski
locally free (this is Hilbert's Theorem~90, i.e.\ $\GL_m$ is special), so over a
dense open $Y_0\subset Y$ we have $\mathbb V(\mathcal F)|_{Y_0}\cong
Y_0\times\Aff^m_k$.  Hence $E/G\sim_k Y\times\Aff^m_k\sim_k(X/G)\times\Aff^m_k$.
\end{proof}

The next reduction is the one underlying Kim--Pandharipande's
rationality assertion: as recalled after Theorem \ref{thm:KP}, they deduce
rationality from a torus degeneration together with the Katsylo--Bogomolov
rationality theorems for quotients of $\SL_2$-representations
\cite{Kat,Bog}, which we use below.

We first fix the notation.  Let $W=k^2$ carry the standard $\GL_2$-action, so
that $\PP^1=\PP(W)$, and let $\Sym^d:=\Sym^d(W^\vee)$ be the
$(d+1)$-dimensional space of binary forms of degree $d$ in the coordinates
$x,y$ on $W$, with $\GL_2$ acting by $(g\cdot f)(w)=f(g^{-1}w)$.  Put
\[
V\;:=\;\{(f_0,\dots,f_r):f_i\in\Sym^d\},
\]
 on which $\GL_2$
acts diagonally and $\mathbb{G}_m$ acts by scaling the tuple; thus
$\dim V=(r+1)(d+1)$.  Let $V^{\mathrm{bpf}}\subset \PP(V)$ be the open locus of
tuples whose members have no common zero on $\PP^1_{\bar k}$.  Such a tuple
determines a morphism of degree $d$
\[
\varphi_{(f_0,\dots,f_r)}\colon\PP^1\longrightarrow\PP^r,
\qquad w\longmapsto[f_0(w):\dots:f_r(w)].
\]
Then $V^{\mathrm{bpf}}$ is the space $U$ of Proposition
\ref{prop:claim22}.  Finally let
\[
H\;=\;(\GL_2\times\mathbb{G}_m)/Z ,
\qquad
Z=\{(\mu\cdot\mathrm{id}_W,\,\mu^{d}):\mu\in\mathbb{G}_m\}\cong\mathbb{G}_m ,
\]
act on $(\PP^1)^n\times V$, with $\GL_2$ acting on the $n$ source points and
on $V$ as above and $\mathbb{G}_m$ by scaling $V$; the subgroup $Z$ is
exactly the subgroup acting trivially, since a scalar $\mu$ acts trivially on
$\PP^1$ and by $\mu^{-d}$ on $\Sym^d$.

\begin{theorem}\label{thm:noname}
Assume $\mathrm{char}\,k=0$, $r\ge1$ and $d\ge3$.  Write
\[
  M_{0,n|d}\;:=\;\big((\PP^1)^n\times\PP^d\big)/\PGL_2 ,
\]
the moduli of $n$ ordered points together with one unordered degree $d$
divisor on $\PP^1$, where $\PP^d=\PP(\Sym^d)$.  Then there is a
birational equivalence over $k$
\[
  \overline M_{0,n}(\PP^r,d)\;\sim_k\;M_{0,n|d}\times\Aff^{\,r(d+1)}_k .
\]
\end{theorem}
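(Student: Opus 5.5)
The plan is to write the interior of the Kontsevich space as a quotient, then peel off the last $r$ forms with Lemma \ref{lem:noname}. First, birationally, $\overline M_{0,n}(\PP^r,d)$ is the quotient $\big((\PP^1)^n\times V^{\mathrm{bpf}}\big)/H$, with $V^{\mathrm{bpf}}$ read in the affine cone. Its interior parametrizes maps $\PP^1\to\PP^r$ given by base-point-free tuples $(f_0,\dots,f_r)$ up to common scalar, together with $n$ distinct points, modulo reparametrization by $\PGL_2$. Equivalently, one takes $(\PP^1)^n\times(V\smallsetminus 0)$ modulo $\GL_2\times\mathbb G_m$, where $Z$ acts trivially. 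The complement of the bpf locus and the boundary are proper closed subsets, so they do not affect birational type. For $n\ge3$ this is the model of Proposition \ref{prop:claim22}, but the quotient description is what we need for all $n$.

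Second, project to the first form. Consider $p:(\PP^1)^n\times V\to(\PP^1)^n\times\Sym^d$, $(x,f_0,\dots,f_r)\mapsto(x,f_0)$. Restricted over $f_0\ne0$, this is an $H$-equivariant vector bundle of rank $r(d+1)$, with fibre $(\Sym^d)^{r}$ and the linear $\GL_2$-action twisted by the $\mathbb G_m$-scaling. Lemma \ref{lem:noname} with $G=H$ then gives
\[
\big((\PP^1)^n\times V\big)/H\ \sim_k\ \big(((\PP^1)^n\times(\Sym^d\smallsetminus0))/H\big)\times\Aff^{r(d+1)}_k .
\]
The base quotient is identified with $M_{0,n|d}$. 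The $\mathbb G_m$ factor, together with the scalars in $\GL_2$ (modulo $Z$), acts on $\Sym^d\smallsetminus 0$ through full scaling, so dividing by it gives $\PP^d$. What remains is $\GL_2/\mathbb G_m=\PGL_2$ acting on $(\PP^1)^n\times\PP^d$. One should check that the combined action on $\Sym^d\smallsetminus0$ mod scalars is exactly $\PGL_2$ and that $Z$ is the precise kernel. This is a short computation from the definition of $H$.

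The main obstacle is verifying the hypothesis of Lemma \ref{lem:noname}: $H$ must act generically freely on the base $(\PP^1)^n\times(\Sym^d\smallsetminus0)$, with a torsor over a dense open set. In characteristic zero, the remark after the lemma (Rosenlicht) reduces this to trivial stabilizer at a general geometric point. Here $d\ge3$ is used. The stabilizer in $\PGL_2$ of a general binary form of degree $d\ge3$ is finite, and it is trivial for general forms once $d\ge3$, with care at $d=3,4$. At $d=3$ and $d=4$ a general form has a nontrivial finite stabilizer: $S_3$, respectively the Klein four-group. For $n\ge1$ the extra general points kill these stabilizers. For $n=0$ and $d\in\{3,4\}$, the $H$-stabilizer of a point of the base must also be computed in $H$, not only in $\PGL_2$. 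Freeness is then needed only on the total space, since the lemma really only requires the torsor structure over the base. I would first try to argue that a finite stabilizer $\Gamma$ acting on the fibre $(\Sym^d)^r$ still yields the product formula. The tool would be the no-name lemma in the form for finite stabilizers (Bogomolov--Katsylo), applied after replacing $\PGL_2$ by $\GL_2$ with the twisted scaling so that $\Gamma$ lifts. Failing that, I would treat $n=0$, $d=3,4$ directly: both sides are rational there, using $M_{0,0|4}\cong\PP^1$.

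Finally, the scaling must match. The chosen $H$ has $\mathbb G_m$ acting only on $V$ and trivially on $(\PP^1)^n$. After fixing $f_0$, the residual scaling freedom is used up, so the fibre coordinates $f_1,\dots,f_r$ descend to an honest rank $r(d+1)$ bundle rather than a projective bundle. The fibre dimension $r(d+1)$ plus $\dim M_{0,n|d}=n+d-3$ gives $(r+1)(d+1)+n-4$, which matches \eqref{eq:dimPr} as a sanity check.
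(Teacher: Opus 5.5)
Your route is the paper's. You present the interior as $\big((\PP^1)^n\times V\big)/H$, regard $(\PP^1)^n\times V\to(\PP^1)^n\times\Sym^d$ as an $H$-equivariant vector bundle with fibre $(\Sym^d)^r$, apply Lemma~\ref{lem:noname}, and identify the base quotient with $M_{0,n|d}$. The paper verifies the hypothesis of the lemma by asserting that a general binary form of degree $d\ge3$ has trivial stabiliser in $\PGL_2$. You are right that this is false for $d=3$ (stabiliser $S_3$) and for $d=4$ (Klein four-group). You are also right that it only matters when $n=0$, since a general point of $\PP^1$ is fixed by no nontrivial element of a finite subgroup of $\PGL_2$. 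So for $n\ge1$, or for $d\ge5$, your argument is complete and agrees with the paper's.

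For $n=0$ and $d\in\{3,4\}$, however, the paper's one-line justification fails, and your proposal does not replace it with a proof. None of the three escape routes you sketch works as stated.
\begin{itemize}
\item Computing in $H$ changes nothing. The kernel $\mathbb G_m$ of $H\to\PGL_2$ acts on $\Sym^d$ by nontrivial scalars, so each element of $\mathrm{Stab}_{\PGL_2}([f_0])$ has exactly one lift fixing $f_0$. Hence $\mathrm{Stab}_H(f_0)\cong\mathrm{Stab}_{\PGL_2}([f_0])$.
\item Your remark that freeness is needed only on the total space is backwards. Lemma~\ref{lem:noname} needs a torsor over the base. If a finite group acts trivially on the base, the quotient is the base times $F/G$, and Noether's problem can fail over non-closed fields (Swan).
\item ``Both sides are rational'' is exactly what has to be shown over a non-closed $k$.
\end{itemize}

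What is missing is a slice reduction followed by a Noether-problem computation.
\begin{itemize}
\item For $d=3$: $H$ has an open orbit $O\subset\Sym^3$ through $f_0=xy(x-y)$, with constant stabiliser $S\cong S_3$. Then $V|_O\cong H\times^S(\Sym^3)^r$, so $k(V)^H=k\big((\Sym^3)^r\big)^S$. The faithful representation $\Sym^3|_S$ contains the reflection representation $\{x_1+x_2+x_3=0\}$ as a direct summand, and its invariant field is $k(e_2,e_3)$. The finite-group case of Lemma~\ref{lem:noname} then gives $\overline M_{0,0}(\PP^r,3)\sim_k\Aff^{4r}$. This is the claim, since $M_{0,0|3}$ is a point.
\item For $d=4$: the same argument runs over $K=k(M_{0,0|4})=k(j)$.
  \begin{itemize}
  \item The generic orbit has a $K$-point, e.g.\ $x^3y+axy^3+by^4$ with $a,b$ chosen to realise the generic invariant.
  \item Its stabiliser $S_\eta$ is the Klein group twisted by the cubic resolvent $L/K$.
  \item $\Sym^4$ contains the summand $\chi_1\oplus\chi_2\oplus\chi_3\cong L$, on which $S_\eta$ acts through $\ker\big(N\colon\Res_{L/K}\mu_2\to\mu_2\big)$.
  \item That quotient is the hypersurface $s^2=N_{L/K}(u)$, which is $K$-rational via $x\mapsto\big(x/N(x),\,1/N(x)\big)$.
  \end{itemize}
  Hence $\overline M_{0,0}(\PP^r,4)\sim_k\PP^1\times\Aff^{5r}$, as the theorem asserts.
\end{itemize}
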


\begin{proof}
 The interior of
$\overline M_{0,n}(\PP^r,d)$ is the quotient of
$(\PP^1)^n_{\mathrm{distinct}}\times V^{\mathrm{bpf}}$ by $H$, and the
$H$-action there is generically free.  All quotients in this proof are
birational quotients, formed on a dense open locus on which the action is
free and a geometric quotient exists. Split $k^{r+1}=k\oplus k^{r}$;
this is an $H$-stable decomposition
$V=A\oplus B$ with $A=\Sym^d$ the first component $f_0$ and
$B$ parametrises the remaining $r$ components $(f_1,\dots,f_r)$, so
$(\PP^1)^n\times V\to(\PP^1)^n\times A$ is an $H$-equivariant vector
bundle of rank $r(d+1)$.  For $d\ge3$ a general binary form of degree
$d$ has trivial stabiliser in $\PGL_2$, so $H$ already acts generically
freely on $(\PP^1)^n\times A$.  Lemma \ref{lem:noname}, applied to the $H$-equivariant vector bundle
$(\PP^1)^n\times V\to(\PP^1)^n\times A$ with fibre $B$, gives
\[
  \big((\PP^1)^n\times V\big)/H\;\sim\;
  \Big(\big((\PP^1)^n\times A\big)/H\Big)\times\Aff^{r(d+1)} .
\]
Finally $\big((\PP^1)^n\times A\big)/H=\big((\PP^1)^n\times\PP(A)\big)/\PGL_2
=M_{0,n|d}$, the scaling $\mathbb{G}_m$ having produced $\PP(A)=\PP^d$.
Dimensions check: $(n+d-3)+r(d+1)=(r+1)(d+1)+n-4$.
\end{proof}

\begin{remark}\label{rem:KP}
Over $k=\mathbb C$, the birational equivalence of Theorem \ref{thm:noname} is
implicit in the proof of \cite[Thm.~3]{KP}. 
\end{remark}

\begin{corollary}\label{cor:allcases}
With $\mathrm{char}\,k=0$, $r\ge1$, $d\ge3$:
\begin{enumerate}
\item[(i)] $\overline M_{0,n}(\PP^r,d)$ is $k$-rational for $n\ge3$;
\item[(ii)] $\overline M_{0,1}(\PP^r,d)$ and $\overline M_{0,2}(\PP^r,d)$ are
$k$-rational;
\item[(iii)] if $M_{0,0|d}=\PP^d/\PGL_2$, the moduli of $d$ unordered points
on $\PP^1$ (equivalently of binary forms of degree $d$, equivalently
$\overline M_{0,d}/S_d$), is $k$-rational, then $\overline M_{0,0}(\PP^r,d)$
is $k$-rational; conversely, $k$-rationality of $\overline M_{0,0}(\PP^r,d)$
gives only that $M_{0,0|d}$ is stably $k$-rational.
\end{enumerate}
\end{corollary}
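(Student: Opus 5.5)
Part (i) is immediate from Proposition \ref{prop:claim22}(i), which gives $k$-rationality for every $n\ge3$ over any field. Alternatively, Theorem \ref{thm:noname} applies. Three ordered $k$-rational points on $\PP^1$ can be moved to $0,1,\infty$ by a unique element of $\PGL_2(k)$. Hence $M_{0,n|d}$ is birational to $(\PP^1)^{n-3}\times\PP^d$, and so is $k$-rational. I will record both routes and lean on the first, since it needs no characteristic hypothesis.

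For (ii) the plan is to use Theorem \ref{thm:noname} with $n=1,2$ and show directly that $M_{0,n|d}$ is $k$-rational. For $n=2$, use $\PGL_2$ to send the two ordered points to $0$ and $\infty$. The residual stabiliser is the torus $T\cong\mathbb G_m$ acting by $t\mapsto \lambda t$, together with the $\mathbb G_m$ of scalars on $\PP(\Sym^d)$. So $M_{0,2|d}\sim \PP^d/\mathbb G_m$ with weights $0,1,\dots,d$ on the coefficients $a_0,\dots,a_d$. A torus quotient of a linear action is $k$-rational: it is birational to a vector space modulo a split torus, and split tori are special. Concretely, the invariants $a_0^{i-1}a_i/a_1^{i}$ up to scaling, for $i\ge2$, give a birational map to $\Aff^{d-1}$; I will check this count against $\dim M_{0,2|d}=d-1$.

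For $n=1$, send the point to $\infty$. The stabiliser is the Borel $B=\mathbb G_a\rtimes\mathbb G_m$. First kill $\mathbb G_a$ by translating so that the coefficient of $x^{d-1}y$ vanishes, which is the Tschirnhaus shift; here $\mathrm{char}\,k=0$ is used, so that $d$ is invertible. This yields a linear $\mathbb G_m$-quotient of $\PP^{d-1}$, again rational by the same torus argument. The dimension check is $d-2$.

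For (iii), the first half is Theorem \ref{thm:noname} with $n=0$: $\overline M_{0,0}(\PP^r,d)\sim M_{0,0|d}\times\Aff^{r(d+1)}$, and a product of a $k$-rational variety with affine space is $k$-rational. The converse follows from the same equivalence: if the left side is rational, then $M_{0,0|d}\times\Aff^{r(d+1)}$ is rational, which is precisely stable rationality of $M_{0,0|d}$.

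The main obstacle is (ii). The passage through the slice (stabiliser $T$ or $B$) must be justified as a birational equivalence over $k$, not merely over $\kbar$. I would argue that $\PGL_2$ acts simply transitively over $k$ on pairs of distinct $k$-points, so the slice map $\PP^d\to(\PP^1)^2\times\PP^d$ induces a birational map $\PP^d/T\sim M_{0,2|d}$, and similarly for $n=1$. The remaining task is to verify generic freeness of the residual torus action, which holds for $d\ge3$ as soon as two weights differ by coprime amounts.
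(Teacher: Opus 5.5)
Your proposal is correct and follows the paper's proof essentially step for step. Part (i) comes from Proposition \ref{prop:claim22}, or from three points normalising $\PGL_2$. Part (ii) uses Theorem \ref{thm:noname} to reduce to $M_{0,1|d}$, which you handle by the Tschirnhaus shift and a $\mathbb G_m$-quotient (the paper's $\PP(2,3,\dots,d)$), and to $M_{0,2|d}$, where the paper's chart $a_0=a_1=1$ is exactly your invariants $a_0^{i-1}a_i/a_1^{i}$. Part (iii) is read off from the product formula. One slip: $\PGL_2$ acts transitively but not simply transitively on ordered pairs of distinct points; the stabiliser of $(0,\infty)$ is the torus $T$, which is what your slice argument actually uses.
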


\begin{proof}
(i) is Proposition~\ref{prop:claim22}; equivalently, $M_{0,n|d}$ is rational
because three of the ordered points normalise $\PGL_2$.

(ii)  By Theorem \ref{thm:noname} it is enough to treat $M_{0,1|d}$ and
$M_{0,2|d}$.  The first is the space of degree $d$ divisors on $\Aff^1$
modulo the affine group; killing the subleading coefficient and quotienting
by $\mathbb{G}_m$ identifies it with the weighted projective space
$\PP(2,3,\dots,d)$, which is rational.  For $M_{0,2|d}$, normalise the two
marked points to $0,\infty$; the residual subgroup of $\PGL_2$ is the torus
$\mathbb G_m$, $t\mapsto\lambda t$.  A degree $d$ divisor is a binary form
$F=\sum_{j=0}^d a_jx^jy^{d-j}$ up to scale, and $\lambda$ acts by
$a_j\mapsto\lambda^ja_j$.  On the open locus $a_0\ne0\ne a_1$, first use the
overall scalar to set $a_0=1$, then use $\lambda$ --- uniquely determined ---
to set $a_1=1$; the remaining coordinates $(a_2,\dots,a_d)$ are free.  This
exhibits a rational affine chart $M_{0,2|d}\sim\Aff^{d-1}$, of the right
dimension $2+d-3=d-1$.  Hence $M_{0,2|d}$ is $k$-rational.

(iii)  Both implications are read off from the birational equivalence
$\overline M_{0,0}(\PP^r,d)\sim_kM_{0,0|d}\times\Aff^{\,r(d+1)}$ of Theorem
\ref{thm:noname}.  
\end{proof}

\begin{remark}
The hypothesis of (iii) holds for all $d$ over an algebraically closed field
of characteristic zero, by Katsylo and Bogomolov \cite{Kat,Bog}; over a
general $k$ it is open in general.
\end{remark}

\subsection{The target $(\PP^1)^n$, with arbitrary markings}\label{ssec:splitX}

The target is now $X=(\PP^1)^n$, and the markings are no longer assumed
individually $k$-rational: they form a closed subscheme
$\Spec E\hookrightarrow C^{\mathrm{sm}}$ as in \S\ref{ssec:Emarking}, of which
the split case $E=k^m$ treated for $\PP^r$ above is one instance.  The
rigidification used there is available only when three of the markings are
$k$-points, and the proposition isolates the two circumstances in which the
source $\PGL_2$ can be rigidified over $k$ regardless: three $k$-rational
markings, or a coordinate of degree one.

\begin{proposition}\label{prop:kRatX}
Let $X=(\PP^1)^n$ and $\widehat X=\PP^1_0\times X=(\PP^1)^{n+1}$, over an
arbitrary field $k$.  Let $\mathbf d=(d_1,\dots,d_n)\in\mathbb Z_{\ge1}^n$,
let $d_0\ge1$, and let $E$ be a degree $m$ \'etale $k$-algebra.
\begin{enumerate}[label=(\alph*)]
\item $\overline M_{0,E}(X,\mathbf d)$ is irreducible of dimension
$n+2\sum_i d_i+m-3$, and is $k$-unirational for every $E$.  It is
$k$-rational in either of the following two cases:
\begin{enumerate}[label=(\roman*)]
\item $E$ has at least three factors isomorphic to $k$, i.e.\ the marking
contains three $k$-rational points --- in particular whenever $E=k^m$ with
$m\ge3$;
\item $d_{i_0}=1$ for some index $i_0$, for arbitrary $E$ and arbitrary
$m\ge0$.
\end{enumerate}
\item If at least one of $d_0,d_1,\dots,d_n$ equals $1$, then
$\mathrm{Cor}_R^{(d_0,\mathbf d)}$ is $k$-rational.  In particular
$\mathrm{Cor}_R^{(d_0,1)}$ is $k$-rational for every $d_0\ge1$, which covers
$\calR^\circ=\mathrm{Cor}_R^{(1,1)}$ and $\mathrm{Cor}_R^{(2,1)}$ of
\S\ref{sec:cor}.
\end{enumerate}
\end{proposition}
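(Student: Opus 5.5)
The plan is to work on the interior $M_{0,E}(X,\mathbf d)$ of stable maps with smooth irreducible domain, and to reduce every global statement to it by convexity. First I will verify convexity of $X=(\PP^1)^n$ directly. For a genus zero stable map $f:C\to X$ we have $f^*T_X=\bigoplus_i f^*p_i^*\calO_{\PP^1}(2)$. On each component of the tree $C$ this restricts to a sum of line bundles of nonnegative degree. For a nodal tree, $H^1$ of a line bundle that is nonnegative on every component vanishes, by induction on the number of components using the normalization sequence. Hence the stack $\overline M_{0,E}(X,\mathbf d)$ is smooth of dimension $\dim X+(-K_X\cdot\mathbf d)+m-3=n+2\sum d_i+m-3$, in agreement with \eqref{eq:expdim}. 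Its boundary is a Cartier divisor, so no component can be contained in the boundary. Every component therefore meets the interior, and irreducibility reduces to irreducibility of the interior. All of this can be checked over $\kbar$, where $E$ splits and Corollary \ref{cor:KPforX} (or the explicit model below) applies.

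Next comes the explicit model for the interior, over $\kbar$ first and then descended. A map $\PP^1\to X$ of multidegree $\mathbf d$ is an $n$-tuple of coprime pairs of binary forms of degree $d_i$ up to scale, which gives an open subset of $\prod_i\PP^{2d_i+1}$. The markings are an open subscheme $\mathrm{Emb}_E\subset\Res_{E/k}(\PP^1_E)$. The interior is the quotient of $\mathrm{Emb}_E\times\prod_i\Mor_{d_i}$ by the source $\PGL_2$, and this quotient is defined over $k$. Since $\mathrm{Emb}_E$ is an open subscheme of a Weil restriction of $\PP^1$, it is $k$-rational by the argument of Theorem \ref{thm:R-rational}. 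It follows that the total space is $k$-rational, and the quotient is $k$-unirational for every $E$. This gives the unirationality assertion of (a).

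For (a)(i), I will rigidify exactly as in Proposition \ref{prop:claim22}. Three $k$-rational markings are sent to $0,1,\infty$ by a unique $k$-isomorphism. This identifies the interior with an open subscheme of $\Res_{E'/k}(\PP^1_{E'})\times\prod_i\PP^{2d_i+1}$, where $E=k^3\times E'$. The latter is $k$-rational.

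For (a)(ii), suppose $d_{i_0}=1$. Then the $i_0$-th coordinate map $C\to\PP^1$ is an isomorphism defined over $k$, and it can be used to identify the domain with $\PP^1_k$. This kills the $\PGL_2$ ambiguity over $k$ with no hypothesis on $E$. The interior becomes an open subscheme of $\mathrm{Emb}_E\times\prod_{i\ne i_0}\PP^{2d_i+1}$, which is $k$-rational. The same choice works for every $m\ge0$, including $m=0$.

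Part (b) follows in the same way. If one of the coordinates $d_j$ (with $j=0$ allowed) equals $1$, then a curve in the distinguished component of $\mathrm{Cor}_R^{(d_0,\mathbf d)}$ projects isomorphically to the $j$-th factor. Hence it is the graph of a morphism from that factor to the remaining ones. This identifies an open dense subset of $\mathrm{Cor}_R^{(d_0,\mathbf d)}$ with $\prod_{i\ne j}\Mor_{d_i}$, an open subset of projective space, which is $k$-rational. The cases $\calR^\circ=\mathrm{Cor}_R^{(1,1)}$ and $\mathrm{Cor}_R^{(2,1)}$ are instances.

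The main obstacle I expect is the step from the interior to the whole space. Here I need the stack to be smooth with boundary a divisor, so that every component meets the interior. I also need the rigidified chart to be compatible with taking the coarse space, so that it is an open subscheme of the coarse space. For the second point I will use that with three markings, or with a degree-one coordinate, interior maps have no automorphisms, so the interior lies in the fine locus.
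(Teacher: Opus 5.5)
Your proposal is correct and follows the paper's proof in all essentials. The shared steps are: convexity of $(\PP^1)^n$ checked directly, the boundary as a Cartier divisor on a smooth stack so that every component meets the interior, the explicit model $(\mathrm{Emb}_E\times\Mor_{\mathbf d})/\PGL_2$ of the interior, and rigidification over $k$ either by three $k$-rational markings or by setting the degree-one coordinate $f_{i_0}$ equal to the identity.

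The differences are only streamlinings. You get unirationality for arbitrary $E$ directly from the $k$-rational cover $\mathrm{Emb}_E\times\Mor_{\mathbf d}$, where the paper instead forgets three adjoined split markings. You prove (b) by identifying integral curves with graphs directly, where the paper passes through $\overline M_{0,0}(\widehat X,(d_0,\mathbf d))$. For irreducibility over a field of positive characteristic, rely on the explicit model rather than Corollary \ref{cor:KPforX}, which rests on Kim--Pandharipande over $\mathbb C$.
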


\begin{proof}
\emph{Step 1: convexity, and the interior is dense.}
We first check that $X$ is convex over any field, i.e.\ that
$H^1(C,\mu^*T_X)=0$ for every genus zero stable map $\mu:C\to X$.  Indeed
$T_X=\bigoplus_{i}\mathrm{pr}_i^*T_{\PP^1}=\bigoplus_i\mathrm{pr}_i^*\calO(2)$,
so $\mu^*T_X=\bigoplus_i f_i^*\calO(2)$ where $f_i=\mathrm{pr}_i\circ\mu$;
on each irreducible component of $C$ this has degree $2\deg(f_i|_{\text{component}})\ge0$,
and a line bundle of nonnegative degree on every component of a connected
nodal curve of arithmetic genus $0$ has vanishing $H^1$.  Hence
$\overline M_{0,m}(X,\mathbf d)$ is a smooth Deligne--Mumford stack of pure
dimension equal to the expected one,
\[
  \dim X+\big(-K_X\cdot\mathbf d\big)+m-3=n+2\sum_i d_i+m-3 ,
\]
since $-K_X=\sum_i 2H_i$.

The boundary $\partial$, the locus of maps with reducible domain, is an
effective Cartier divisor: locally it is cut out by the node-smoothing
parameter in the versal deformation.  A Cartier divisor on a smooth stack
contains no irreducible component of that stack, so every irreducible
component of $\overline M_{0,m}(X,\mathbf d)$ meets the interior
$M_{0,m}(X,\mathbf d)$ of maps with smooth domain.  We check below that this
interior is irreducible; granting that, $\overline M_{0,m}(X,\mathbf d)$ is
irreducible with dense interior, and it suffices to exhibit a $k$-rational
birational model of the interior.  Note that this argument is
characteristic-free and does not invoke Theorem \ref{thm:KP}.

\emph{Step 2: the interior, explicitly.}
A stable map with smooth domain is a morphism $\mu:\PP^1\to X$ together with
$m$ distinct marked points, taken up to the precomposition action of
$\PGL_2$ on the source.  Giving $\mu$ is giving the $n$ coordinates
$f_i:\PP^1\to\PP^1$, and a morphism $\PP^1\to\PP^1$ of degree $d$ is a pair
$(a:b)$ of binary forms of degree $d$ with no common zero, up to a common
scalar.  Thus
\[
  \Mor_d(\PP^1,\PP^1)\;=\;\PP\big(H^0(\calO(d))^{\oplus2}\big)\smallsetminus
  \{\mathrm{Res}(a,b)=0\}\;\subset\;\PP^{2d+1},
\]
an open subvariety of a projective space, over $k$, dense
and irreducible.  Writing
$\Mor_{\mathbf d}:=\prod_{i=1}^n\Mor_{d_i}(\PP^1,\PP^1)$, an irreducible
$k$-rational variety of dimension $\sum_i(2d_i+1)=2\sum_id_i+n$, we get
\begin{equation}\label{eq:interiorX}
  M_{0,E}(X,\mathbf d)\;=\;\Big(\mathrm{Emb}_E\times\Mor_{\mathbf d}\Big)\Big/\PGL_2 ,
\end{equation}
where
\[
  \mathrm{Emb}_E:=\{\text{closed immersions }\Spec E\hookrightarrow\PP^1\}
\]
is the open subscheme of $\Res_{E/k}(\PP^1_E)$ where the $m$ geometric points
are distinct, and $\PGL_2$ acts diagonally, by
$g\cdot(\iota,(f_i))=(g\circ\iota,(f_i\circ g^{-1}))$.  Note that
$\mathrm{Emb}_E$ is $k$-rational, being open and dense in
$\Res_{E/k}(\PP^1_E)$, which is birational to
$\Res_{E/k}(\Aff^1_E)=\Aff^m_k$; for $E=k^m$ it is
$(\PP^1)^m\smallsetminus\Delta$.  The source of \eqref{eq:interiorX} is
irreducible, so the interior is irreducible, as claimed in Step 1.  The
action is free whenever $m\ge3$ (three distinct geometric points have trivial
stabiliser) and also whenever some $d_{i_0}=1$ (see Step 4), so in those
cases the quotient is a geometric quotient of varieties.

\emph{Step 3: case (i), three $k$-rational markings.}
Write $E=k^3\times E'$, so that the marking contains three $k$-points
$p_1,p_2,p_3$ and a residual $E'$-marking.  Given $(p_1,p_2,p_3)$ distinct there is a \emph{unique}
$g\in\PGL_2$ with $(g(p_1),g(p_2),g(p_3))=(0,1,\infty)$, and it depends
algebraically on $(p_1,p_2,p_3)$; since $0,1,\infty\in\PP^1(k)$, this is
defined over $k$.  Hence the locally closed subvariety
\[
  \Sigma:=\{(p_\bullet,(f_i)):p_1=0,\ p_2=1,\ p_3=\infty\}
\]
is a $k$-rational slice meeting every $\PGL_2$-orbit exactly once, and
\[
  M_{0,E}(X,\mathbf d)\;\cong\;\Sigma\;\cong\;W\times\Mor_{\mathbf d},
  \qquad W:=\{\iota'\in\mathrm{Emb}_{E'}:\ \mathrm{im}(\iota')\cap\{0,1,\infty\}=\emptyset\},
\]
an open subvariety of $\Res_{E'/k}(\PP^1_{E'})\times\prod_i\PP^{2d_i+1}$, hence
$k$-rational of dimension $(m-3)+2\sum_id_i+n$, matching Step 1.  For
$E=k^m$ this is the familiar
$W=\{(p_4,\dots,p_m)\ \text{distinct},\ \notin\{0,1,\infty\}\}$.  

\emph{Step 4: case (ii), a degree one coordinate.}
Now the markings no longer rigidify the source, so we use the map $f_{i_0}$ instead.  The index $i_0$ is an index of the \emph{target}
$(\PP^1)^n$, which is split, so it is $k$-rational whatever the marking
algebra.  A degree one morphism $\PP^1\to\PP^1$ is an element of
$\PGL_2$, so $\Mor_1(\PP^1,\PP^1)=\PGL_2$ and the precomposition action
$f_{i_0}\mapsto f_{i_0}\circ g^{-1}$ is right translation, which is simply
transitive.  Therefore the locally closed subvariety
\[
  \Sigma':=\{(p_\bullet,(f_i)):f_{i_0}=\mathrm{id}_{\PP^1}\}
\]
meets every orbit exactly once, is defined over $k$ (the index $i_0$ is), and
\[
  M_{0,E}(X,\mathbf d)\;\cong\;\Sigma'\;\cong\;
  \mathrm{Emb}_E\times\prod_{i\ne i_0}\Mor_{d_i}(\PP^1,\PP^1),
\]
$k$-rational because $\mathrm{Emb}_E$ is, of
dimension $m+\sum_{i\ne i_0}(2d_i+1)=m+2\sum_id_i+n-3$, matching Step 1.
This also shows the action is free on the locus in question, completing Step
2.

\emph{Step 4$'$: arbitrary $E$ and arbitrary $\mathbf d$.}
Here neither the markings nor a coordinate rigidify the source, and we
descend from a space with more markings --- three \emph{split} ones, which is
exactly what Step 3 needs.  Adjoining three $k$-rational markings to the
$E$-marking and then forgetting them, with stabilization, gives a morphism
\[
  \pi:\ \overline M_{0,\,E\times k^3}(X,\mathbf d)\longrightarrow\overline M_{0,E}(X,\mathbf d),
\]
defined over $k$, since the forgotten markings are the three split ones and
stabilization is functorial.  It is dominant, indeed surjective: given a
stable map with an $E$-marking, its domain has infinitely many points over
$\kbar$, so three further markings may be added in general position, and the
resulting map is again stable because $\mathbf d$ is positive on every
component of the domain that would otherwise destabilize.  The algebra
$E\times k^3$ has three factors isomorphic to $k$, so the source is
$k$-rational by Step 3, hence $k$-unirational; a variety dominated by a
$k$-unirational variety is $k$-unirational, so
$\overline M_{0,E}(X,\mathbf d)$ is $k$-unirational for every $E$.

Part (a) follows.

\emph{Step 5: part (b).}
Apply (a) to the target $\widehat X=(\PP^1)^{n+1}$, the multidegree
$(d_0,\mathbf d)$ and the empty marking $E=k^0$: by hypothesis one of the $n+1$ degrees is $1$,
so Step 4 applies and $\overline M_{0,0}(\widehat X,(d_0,\mathbf d))$ is
irreducible and $k$-rational.

It remains to compare stable maps with curves.  Let $j$ be an index with
degree $1$.  If $\mu:\PP^1\to\widehat X$ has multidegree $(d_0,\mathbf d)$
then $\mathrm{pr}_j\circ\mu$ has degree $1$, hence is an isomorphism, so
$\mu$ is a closed immersion onto its image and in particular birational onto
it.  Consequently the assignment $\mu\mapsto\mu(\PP^1)$ identifies the
interior $M_{0,0}(\widehat X,(d_0,\mathbf d))$ with the locus of
\emph{irreducible} curves in $\widehat X$ of multidegree
$(d_0,\mathbf d)$: the map is injective because a curve determines its
normalization and $\mu$ is the normalization map, and surjective because any
irreducible curve of this multidegree admits a degree one projection and is
therefore rational and smooth, so equals $\mu(\PP^1)$ for the corresponding
$\mu$.  Both sides are open dense in their respective compactifications,
$\overline M_{0,0}(\widehat X,(d_0,\mathbf d))$ and
$\mathrm{Cor}_R^{(d_0,\mathbf d)}$, the former by Step 1.  Hence
\[
  \mathrm{Cor}_R^{(d_0,\mathbf d)}\ \sim_k\ \overline M_{0,0}(\widehat X,(d_0,\mathbf d)),
\]
and $\mathrm{Cor}_R^{(d_0,\mathbf d)}$ is $k$-rational.  For
$\mathbf d=(1,\dots,1)$ the hypothesis holds for every $d_0\ge1$, giving the
last assertion; at $(d_0,\mathbf d)=(1,\mathbf1)$ one recovers
$\calR^\circ\cong\PGL_2^S$ directly from Step 4.
\end{proof}

\begin{proposition}\label{prop:XmultidegRational}
For every $m\ge3$, every multidegree $\tilde d=(d_1,\dots,d_n)$ with
$d_i\ge1$, and every field $k$, the space
$\overline M_{0,m}\big((\PP^1)^n,\tilde d\big)$ is $k$-rational.
\end{proposition}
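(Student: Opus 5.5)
This is the split case $E=k^m$, $m\ge3$, of Proposition \ref{prop:kRatX}(a)(i), and the plan is to run that argument explicitly for $E=k^m$. The three steps are convexity and density of the interior, an explicit slice for the interior, and a check that the slice is a $k$-rational open subvariety of a product of projective spaces.

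\emph{Convexity and density.} First I will recall Step 1 of the proof of Proposition \ref{prop:kRatX}. Since $T_X=\bigoplus_i\mathrm{pr}_i^*\calO(2)$, for every genus zero stable map $\mu$ the bundle $\mu^*T_X$ has nonnegative degree on every component of the domain, so $H^1(C,\mu^*T_X)=0$. Hence the stack $\overline M_{0,m}((\PP^1)^n,\tilde d)$ is smooth of pure dimension $n+2\sum_id_i+m-3$. The boundary is a Cartier divisor, cut out locally by the node-smoothing parameters, so it contains no component. Therefore every irreducible component meets the interior $M_{0,m}((\PP^1)^n,\tilde d)$. This step is characteristic-free and uses no input from Theorem \ref{thm:KP}.

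\emph{The slice.} Next I identify the interior with $\big(((\PP^1)^m\smallsetminus\Delta)\times\Mor_{\tilde d}\big)/\PGL_2$ as in \eqref{eq:interiorX}, where $\Mor_{\tilde d}=\prod_i\Mor_{d_i}(\PP^1,\PP^1)$. Each factor $\Mor_{d_i}(\PP^1,\PP^1)$ is the complement of the resultant hypersurface in $\PP^{2d_i+1}_k$. The markings $p_1,p_2,p_3$ are $k$-sections because $E=k^m$. So there is a unique $g\in\PGL_2$ carrying them to $(0,1,\infty)$, and $g$ depends algebraically on $(p_1,p_2,p_3)$ and is defined over $k$. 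The slice $\Sigma=\{p_1=0,\ p_2=1,\ p_3=\infty\}$ therefore gives an isomorphism of functors over $k$,
\[
  M_{0,m}((\PP^1)^n,\tilde d)\;\cong\;W\times\Mor_{\tilde d},\qquad W=\{(p_4,\dots,p_m)\ \text{distinct},\ p_j\notin\{0,1,\infty\}\}.
\]
This carries over verbatim the argument of Proposition \ref{prop:claim22} with $\PP^r$ replaced by $(\PP^1)^n$. The right-hand side is irreducible and open in $(\PP^1)^{m-3}\times\prod_i\PP^{2d_i+1}$, hence $k$-rational. Its dimension is $(m-3)+\sum_i(2d_i+1)$, which matches Step 1.

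\emph{Main obstacle.} The delicate point is the passage from the interior to the whole coarse space: I must show that the open interior is dense in the coarse moduli space. Irreducibility of the interior follows from the slice description above. Combined with the Cartier-divisor argument, it shows that the stack is irreducible with dense interior. Since the coarse map is a homeomorphism, the coarse space is irreducible as well. The interior consists of automorphism-free points, because $m\ge3$ distinct markings lie on an irreducible domain. Hence on the interior the coarse space coincides with the fine moduli space, by \cite[Thm.~1.10]{Cos}. Thus the coarse space contains a dense open subset isomorphic to $W\times\Mor_{\tilde d}$, and is $k$-rational.
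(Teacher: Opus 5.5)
Your proposal is correct and follows the paper's route: you rigidify $p_1,p_2,p_3$ to $0,1,\infty$ over $k$, exactly as in Proposition \ref{prop:claim22}, and so identify the interior with an open subset of $(\PP^1)^{m-3}\times\prod_i\PP^{2d_i+1}$. Your explicit density-of-the-interior argument (convexity plus the Cartier boundary) is the one the paper gives in Step 1 of Proposition \ref{prop:kRatX}, which its short proof here leaves implicit.
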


\begin{proof}
Exactly as in Proposition \ref{prop:claim22}: normalize $(p_1,p_2,p_3)$ to
$(0,1,\infty)$, which is defined over $k$.  What remains is a point of
$W\subset(\PP^1)^{m-3}$ together with an $n$-tuple of degree $d_i$ morphisms
$\PP^1\to\PP^1$, i.e.\ a point of
$\Mor_{d_i}(\PP^1,\PP^1)$, which is the open locus of coprime pairs of
binary forms of degree $d_i$ inside
$\PP\big(H^0(\calO(d_i))^{\oplus2}\big)=\PP^{2d_i+1}$.  Hence the interior is
open in $(\PP^1)^{m-3}\times\prod_i\PP^{2d_i+1}$, of dimension
$(m-3)+\sum_i(2d_i+1)=m-3+2\sum_id_i+n$, as required.
\end{proof}

\begin{proof}[Proof of Theorem \ref{thmA}]
Part (a) is Proposition \ref{prop:claim22} for the target $\PP^r$ and
Proposition \ref{prop:kRatX}(a) for the target $(\PP^1)^n$; the supplementary
case $m\le2$ with some coordinate degree equal to $1$ is Proposition
\ref{prop:kRatX}(a)(ii).  Part (b) is Theorem \ref{thm:noname}, and the two
implications recorded after it --- rationality of $\overline M_{0,0}(\PP^r,d)$
from that of the moduli of binary forms, and stable rationality of the latter
from the former --- are Corollary \ref{cor:allcases}(iii).
\end{proof}

\section{Proof of Theorem 2: twisted targets and the Brauer obstruction}
\label{ssec:twistedPr}

There are two independent ways to twist, which we look at below.  Let $A$ be a central simple $k$-algebra of degree
$r+1$ and $X=\SB(A)$ the associated Severi--Brauer variety; let $E$ be a
degree $n$ \'etale $k$-algebra, used to twist the labelling of the
markings.  The curve class $d[\text{line}]$ is Galois invariant, so the
Kontsevich space
\[
  \overline M_{0,E}(X,d)
\]
is defined over $k$ and is a $k$-form of $\overline M_{0,n}(\PP^r,d)$; it is
split by any field splitting both $A$ and $E$.  Since
$\overline M_{0,n}(\PP^r,d)$ is $k$-rational for $n\ge1$ by
Corollary~\ref{cor:allcases}, these are forms of a rational variety, and
the question is whether they are rational, unirational, or even have a
$k$-point in the automorphism-free locus.

\begin{theorem}\label{thm:amitsur}
Suppose $\overline M_{0,E}(X,d)$ has a $k$-point lying in the
automorphism-free locus, corresponding to a stable map $f:C\to X$ over
$k$ with $E$-marking $\mu:\mathrm{Spec}\,E\hookrightarrow C$.  Let
$[C]\in\Br(k)[2]$ be the class of the genus zero curve $C$.  Then
\begin{equation}\label{eq:AdC}
  [A]\;=\;d\cdot[C]\qquad\text{in }\Br(k),
\end{equation}
and $[C]$ is split by every factor of $E$.  Consequently:
\begin{enumerate}
\item[(i)] if $d$ is even, then $A$ is split;
\item[(ii)] if some factor of $E$ has odd degree over $k$ --- in
particular if $n$ is odd --- then $C\cong\PP^1_k$ and $A$ is split;
\item[(iii)] if $\ind(A)>2$, or if $\per(A)>2$, then
$\overline M_{0,E}(X,d)$ has \emph{no} $k$-point in the automorphism-free
locus, for any $n$, any $E$ and any $d$;
\item[(iv)] if $A$ is not split, then $d$ is odd, $A\cong M_m(Q)$ for a
quaternion algebra $Q$ with $[Q]=[C]$, and, writing $E=\prod_iL_i$ as a
product of finite separable field extensions of $k$, every $L_i/k$ has even
degree and splits $Q$; in particular $n$ is even.
\end{enumerate}
\end{theorem}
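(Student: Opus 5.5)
The plan is to compare the Brauer obstruction classes of $X$ and $C$ through the Hochschild--Serre boundary map. For a proper geometrically connected $k$-variety $Y$, the low-degree exact sequence of the Hochschild--Serre spectral sequence gives
\[
  \Pic(Y)\to\Pic(Y_{\kbar})^{\Gammak}\xrightarrow{\ \delta_Y\ }\Br(k)\to\Br(Y),
\]
and $\delta$ is functorial: for a $k$-morphism $f:C\to X$ we have $\delta_C(f^*L)=\delta_X(L)$ for every $L\in\Pic(X_{\kbar})^{\Gammak}$. For $X=\SB(A)$ the class $\calO(1)$ is Galois invariant and $\delta_X(\calO(1))=[A]$; this is the standard description of the Severi--Brauer obstruction. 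Since the point is in the automorphism-free locus, it is a genuine stable map $f:C\to X$ over $k$ (\S\ref{ssec:fineness}). So $[A]=\delta_C(f^*\calO(1))$, and $f^*\calO(1)$ is a Galois-invariant class on $C_{\kbar}$ of total degree $d$.

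First I treat $C$ irreducible, hence a smooth conic. Then $\Pic(C_{\kbar})=\mathbb Z$ is generated by a degree one class $h$ with $\delta_C(h)=[C]$. Hence $f^*\calO(1)=d\,h$, and $[A]=d[C]$. For general nodal $C$ I expect the main obstacle: $\Pic(C_{\kbar})\cong\mathbb Z^{\{\text{components}\}}$ via multidegree, so Galois-invariant classes of degree $0$ need not vanish. I would handle this through the dual tree of $C_{\kbar}$, on which $\Gammak$ acts. A finite group acting on a tree fixes a vertex or an edge.
\begin{itemize}
\item If it fixes an edge, the corresponding node is a $k$-point. Each component through it that is defined over $k$ is split. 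Using that the restriction maps $\Pic(C_{\kbar})^{\Gammak}\to\Pic(C'_{\kbar})^{\Gammak}$ to Galois-stable unions of components commute with $\delta$, I would propagate along the tree and show $\delta_C$ vanishes identically. Then $[C]=0$ and $[A]=0$, which is consistent with the claim.
\item If it fixes a vertex, that component $C_0$ is a conic over $k$. I would show that $\delta_C$ factors through total degree, with $\delta_C(L)=\deg(L)\,[C_0]$. The idea is to check this on the orbit sums of component generators: each orbit of components hanging off $C_0$ meets $C_0$ in a Galois-stable set of nodes, which lets one trade its contribution for a divisor on $C_0$. This identifies $[C]:=[C_0]$ and gives \eqref{eq:AdC}.
\end{itemize}

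Next comes the splitting by the factors of $E$. A factor $L_i$ of $E$ gives an $L_i$-point of $C^{\mathrm{sm}}$. After base change to $L_i$, the component of $C_{L_i}$ containing that point acquires a smooth rational point and becomes $\PP^1_{L_i}$. By the description of $[C]$ just obtained, $[C]_{L_i}=0$.

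The consequences then follow formally.
\begin{itemize}
\item For (i): $[C]\in\Br(k)[2]$, being the class of a conic (a quaternion algebra), so $d$ even gives $[A]=0$.
\item For (ii): if $[L_i:k]$ is odd, restriction--corestriction (or Springer's theorem for the norm form of the conic) shows that $[C]$, being killed by $L_i$ and $2$-torsion, is trivial. Hence $C\cong\PP^1_k$ and $A$ is split.
\item For (iii): \eqref{eq:AdC} shows that $[A]$ is a multiple of a quaternion class, so $\per(A)\le2$ and $\ind(A)\le2$. The contrapositive is (iii).
\item For (iv): $A$ non-split forces $d$ odd by (i), and then $[A]=[C]=[Q]$ for the quaternion algebra $Q$ of $C_0$. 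Wedderburn's theorem gives $A\cong M_m(Q)$. Every $L_i$ splits $Q$, so every $[L_i:k]$ is even by (ii), and therefore $n=\sum_i[L_i:k]$ is even.
\end{itemize}
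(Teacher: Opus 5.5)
Your argument is correct. For a smooth domain it is the paper's proof: naturality of the Hochschild--Serre boundary, Amitsur's $\delta_X(\calO(1))=[A]$, and $\deg f^*\calO(1)=d$, followed by restriction--corestriction in place of the paper's ``a closed point of degree $m$ forces $\ind[C]\mid m$''. You genuinely differ on nodal domains. The paper sets up $\delta_Y$ only for smooth geometrically integral $Y$ and writes $f^*\calO_X(1)=\calO_{C_{\kbar}}(d)$, so it tacitly assumes $C$ is a smooth conic. Yet the automorphism-free locus also contains maps with reducible domain, for which the statement does not even define $[C]$. Your dual-tree analysis therefore covers the whole automorphism-free locus, not just its interior, and that is a real gain.

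That extra part can be shortened and made airtight:

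\emph{Flipped edge.} If the stabilized edge is flipped, no component through the node is defined over $k$, so your sentence about split components is vacuous there. You do not need propagation: the node itself is a $k$-point of $C$, so $\Br(k)\to H^2(C,\mathbb{G}_m)$ is injective and $\delta_C\equiv 0$ by exactness.

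\emph{Fixed vertex.} Suppose $C_0$ is fixed, and write $e_v$ for the degree-one class on a component $v$. Let $O\neq\{C_0\}$ be an orbit, with $v\in O$ defined over $K_v$ and $[K_v:k]=|O|$. The stabilizer of $v$ fixes the whole path from $C_0$ to $v$, so $v$ and $C_0$ both acquire $K_v$-rational nodes. Hence $\sum_{v\in O}e_v$ is the class of $\calO_C(\sum_{v\in O}q_v)$ for a Galois-stable set of smooth points $q_v$. Also $|O|[C_0]=\mathrm{cores}_{K_v/k}\mathrm{res}_{K_v/k}[C_0]=0$. So both sides of $\delta_C(\sum_{O}e_v)=|O|[C_0]$ vanish. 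The same path argument shows that if several vertices are fixed, they are all split, so $[C]:=[C_0]$ is well defined.

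\emph{Wording of (ii).} For reducible $C$, the conclusion ``$C\cong\PP^1_k$'' must be read as $[C]=0$, i.e.\ $C_0\cong\PP^1_k$. This defect is inherited from the statement.
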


\begin{proof}
Let $Y$ be a smooth proper geometrically integral $k$-variety and
$\delta_Y:\Pic(Y_{\bar k})^{\Gamma}\to\Br(k)$ the boundary map of the
Hochschild--Serre sequence
$0\to\Pic(Y)\to\Pic(Y_{\bar k})^{\Gamma}\to\Br(k)\to\Br(Y)$.  It is
natural in $Y$.  By Amitsur, $\delta_X(\mathcal{O}(1))=[A]$; for a genus
zero curve, $\delta_C(\mathcal{O}(1))=[C]$, the class of the
corresponding quaternion algebra.  Since $f$ has degree $d$,
$f^*\mathcal{O}_X(1)=\mathcal{O}_{C_{\bar k}}(d)$, and naturality gives
$[A]=\delta_C(\mathcal{O}(d))=d[C]$, which is \eqref{eq:AdC}.  The
marking $\mu$ exhibits closed subschemes of $C$ with residue algebras
the factors of $E$; a genus zero curve acquires a rational point over
any field splitting its class, and conversely a closed point of degree
$m$ forces $\ind[C]\mid m$.  Since $[C]$ is $2$-torsion, an odd degree
factor forces $[C]=0$, i.e.\ $C\cong\PP^1_k$; then \eqref{eq:AdC} gives
$[A]=0$.  Statement (i) is \eqref{eq:AdC} with $d$ even.  For (iii),
\eqref{eq:AdC} forces $[A]\in\Br(k)[2]$ with $\ind[A]\le2$.  For (iv), suppose $A$ is not split.  By (i), $d$ is odd, so
$d[C]=[C]$ since $[C]$ is $2$-torsion, and \eqref{eq:AdC} reads $[A]=[C]$;
thus $[A]$ is a nonzero $2$-torsion class, $A\cong M_m(Q)$ for the quaternion
algebra $Q$ with $[Q]=[C]$, and $\ind[C]=2$.  Writing $E=\prod_iL_i$ with
each $L_i/k$ a finite separable field extension, the marking gives a closed
point of $C$ of degree $[L_i:k]$, so $2=\ind[C]$ divides $[L_i:k]$; and an
$L_i$-point of $C=\SB(Q)$ splits $Q$.  Finally $n=\sum_i[L_i:k]$ is a sum of
even numbers.
\end{proof}

\begin{corollary}\label{cor:splitmarkings}
For $n\ge1$ and $E=k^n$ split, the automorphism-free locus of
$\overline M_{0,n}(\SB(A),d)$ has a $k$-point if and only if $A$ is split.
\end{corollary}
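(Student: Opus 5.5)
The two directions are of very different weight, and I would treat them separately.

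For ``only if'', take $E=k^n$ with $n\ge1$. The factor $k$ of $E$ has odd degree $1$, so Theorem~\ref{thm:amitsur}(ii) applies directly. Concretely, a $k$-point $[f:C\to\SB(A)]$ of the automorphism-free locus is an honest stable map over $k$ (\S\ref{ssec:fineness}). The first marking is a $k$-point of $C^{\mathrm{sm}}$, so $[C]=0$. Then \eqref{eq:AdC} gives $[A]=d[C]=0$, i.e.\ $A$ is split.

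For ``if'', suppose $A$ is split. Then $\SB(A)\cong\PP^r_k$, and the space is $\overline M_{0,n}(\PP^r,d)$ with split markings. I will exhibit an explicit $k$-point with trivial automorphism group, rather than appeal to rationality; rationality is only known for $n\ge3$, or in characteristic zero via Corollary~\ref{cor:allcases}.

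Take $C=\PP^1_k$ and $f=[x^d:x^{d-1}y:\dots]$, the degree $d$ rational normal curve composed with a linear embedding $\PP^d\hookrightarrow\PP^r$ when $r\ge d$. When $r<d$, instead take $f=[x^d:y^d:g_2:\dots:g_r]$ with $g_i$ generic binary forms over $k$. In either case choose $n$ distinct points $p_1,\dots,p_n\in\PP^1(k)$.

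Finiteness of $\PP^1(k)$ is the main obstacle. If $k$ is finite and $n>|k|+1$, there are not enough $k$-points, so I would take the markings $p_i$ on a chain domain instead. Concretely, use a comb of $k$-rational lines, with the extra components contracted or of positive degree as needed for stability, so that the markings fit.

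Triviality of automorphisms is the remaining point. An automorphism of $(C,p_\bullet,f)$ is some $g\in\PGL_2$ with $f\circ g=f$ fixing each $p_i$.

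1. If $f$ is birational onto its image, as for a rational normal curve, then $f\circ g=f$ forces $g=\mathrm{id}$. This holds for any $n$, including $n=1$.
2. For $r=1$ and $d\ge2$, the map $f$ has a nontrivial deck group in general. I would choose $f$ generic, with trivial automorphism group among degree $d$ rational functions, e.g.\ $f=x^d+xy^{d-1}$ suitably normalized over $k$.
3. Alternatively, place the markings so that no nontrivial deck transformation fixes $p_1$ together with the ramification data.

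I expect this small-$r$ bookkeeping, rather than the Brauer-group direction, to be the main work.
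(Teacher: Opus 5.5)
Your proposal is correct and has the same skeleton as the paper's proof: a $k$-rational marking forces $A$ to split, and conversely you write down an explicit automorphism-free stable map over $k$. The details differ in both directions.

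For ``only if'', the paper does not use \eqref{eq:AdC}. It notes that $f(p_1)$ is a $k$-point of $\SB(A)$, and a Severi--Brauer variety with a rational point is split. This is shorter. It also covers a reducible domain, where $[C]$ is not defined and your step ``$[C]=0$'' would need this observation anyway.

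For ``if'', the paper disposes of $n\ge3$ at once: an automorphism of a map with irreducible domain fixes three distinct markings, so it is the identity whatever $f$ is. The birationality and deck-group bookkeeping you expect to be the main work is therefore only needed for $n\le2$, where the paper takes a general map with general markings. Your item~2 cannot work for $d=2$, since every separable double cover $\PP^1\to\PP^1$ has a deck involution; there item~3, putting $p_1$ off its fixed points, is what is needed.

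In one respect you are more careful than the paper. Its proof says ``as soon as $\PP^1_k$ carries $n\ge3$ distinct $k$-rational points'' and never treats finite $k$ with $n>|k|+1$; your comb of $k$-rational lines is the right fix. It is cleanest to give every component, including the one carrying $f$, at least one marking and at least three $k$-rational special points. Then each component is mapped to itself and fixed pointwise, with no analysis of $f$ needed. Over a finite field you should also replace ``generic $g_i$ over $k$'' with explicit forms. For example, $f=[x^d:x^{d-1}y:y^d:0:\dots:0]$ works for $r\ge2$, since it is injective over any field.
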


\begin{proof}
A $k$-point of the automorphism-free locus is an actual stable map $f:C\to X$
over $k$ with its markings, so a $k$-rational marking is a $k$-point of $C$;
hence $C\cong\PP^1_k$ and $f(p_1)$ is a $k$-point of $X=\SB(A)$, and $A$ is
split.  Conversely, for $A$ split the space is $\overline M_{0,n}(\PP^r,d)$,
and as soon as $\PP^1_k$ carries $n\ge3$ distinct $k$-rational points a map
with those markings is automorphism-free; for $n\le2$, a general degree $d$
map with distinct $k$-rational markings has trivial automorphisms as well.
\end{proof}

\begin{proposition}\label{prop:converse}
Let $k$ be infinite, $d$ odd, $Q$ a quaternion algebra, $C=\SB(Q)$, and
$A=M_m(Q)$ of degree $r+1=2m$ with $2m\ge d+1$.  Let $E=\prod_iL_i$ be an
\'etale $k$-algebra whose field factors have even degree over $k$ and admit
$L_i$-points on $C$.  Then $\overline M_{0,E}(\SB(A),d)$ has a $k$-point in
the automorphism-free locus.  
\end{proposition}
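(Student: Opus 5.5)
Since $[A]=[Q]=[C]$ and $d$ is odd, the relation \eqref{eq:AdC} of Theorem \ref{thm:amitsur} already holds with the domain $C=\SB(Q)$, so there is no obstruction. The plan is to construct a $k$-morphism $f:C\to\SB(A)$ of degree $d$, with $C=\SB(Q)$ as domain, together with an $E$-marking on $C$, and then to check that the resulting stable map has no automorphisms. The construction is a twisted Veronese map.

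Over $\kbar$ the map is the degree $d$ Veronese $\PP^1\to\PP^d$, followed by a linear embedding $\PP^d\hookrightarrow\PP^{2m-1}$, which exists since $2m\ge d+1$. To descend it, I would use the $\PGL_2$-equivariance of the Veronese: $\Sym^d$ of the standard representation gives a homomorphism $\PGL_2\to\PGL_{d+1}$ when $d$ is even, but only $\GL_2\to\GL_{d+1}$ when $d$ is odd. So it is cleaner to work with algebras. The conic $C=\SB(Q)$ is the variety of rank-one right ideals of $Q$, and $\calO_C(d)$ with $d$ odd is not defined over $k$. However, the map $C_{\kbar}\to\PP^{d}$ given by $|\calO(d)|$ corresponds to a Severi--Brauer variety of class $d[Q]=[Q]$. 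Concretely, $\Sym^d$ of the $Q$-module $Q$ (as a $Q\otimes Q^{op}$-type object), or equivalently the descent of $\Sym^d$ of the rank-$2$ bundle on $C$, gives a Severi--Brauer variety $\SB(B_d)$ with $B_d$ a central simple algebra of degree $d+1$ and $[B_d]=d[Q]=[Q]$. The Veronese then descends to a $k$-morphism $C\to\SB(B_d)$ of degree $d$. Here Amitsur-style naturality ensures that the class of the target is exactly $d[C]$.

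The next step is to embed $\SB(B_d)\hookrightarrow\SB(A)$ linearly. Since $[B_d]=[A]$ and $\deg B_d=d+1$ is even (as $d$ is odd) with $d+1\le 2m$, we have $B_d\cong M_{(d+1)/2}(Q)$ and $A\cong M_m(Q)$. A right ideal $Q^{(d+1)/2}\subset Q^m$ then gives a twisted-linear closed embedding $\SB(B_d)\hookrightarrow\SB(A)$ over $k$, and the composite $f$ has degree $d$ on lines. For the marking, each $L_i$ splits $Q$, so $C(L_i)\neq\varnothing$. Since $k$ is infinite, the $L_i$-points are Zariski dense, and I can choose points $x_i\in C(L_i)$ whose Galois conjugates are pairwise distinct and disjoint across different $i$. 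Choosing $x_i$ generating $L_i$ exactly, i.e.\ not defined over a proper subfield, gives closed points with residue field $L_i$, and hence a closed immersion $\Spec E\hookrightarrow C$.

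The main obstacle is automorphism-freeness. The domain is smooth and $f$ is a closed embedding when $d\ge1$, so any automorphism of $(C,f,\text{marking})$ must be an automorphism $\sigma$ of $C$ with $f\circ\sigma=f$. Since $f$ is injective, $\sigma=\mathrm{id}$. So the stable map is automorphism-free regardless of the marking, and the only points needing care are that the descended Veronese is genuinely an embedding and the existence of distinct marking points. The remaining care lies in making the descent of $\Sym^d$ precise for odd $d$. I would handle this by Galois descent: take the cocycle of $C$ in $\PGL_2$, lift it locally to $\GL_2$, and observe that its image in $\PGL_{d+1}$ under $\Sym^d$ is a well-defined cocycle, because scalars map to scalars. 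This gives $B_d$ with a Veronese map equivariant for the twisted actions, and the boundary computation identifies $[B_d]=d[Q]$.
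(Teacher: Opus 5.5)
Your proposal is correct and follows essentially the same route as the paper: descend the $d$-th Veronese along the cocycle of $C$ to a degree $d$ morphism $C\to\SB(B_d)$ with $[B_d]=d[Q]=[Q]$, embed $\SB(M_{(d+1)/2}(Q))\hookrightarrow\SB(M_m(Q))$ via $Q^{(d+1)/2}\subset Q^m$, and get automorphism-freeness from $f$ being a closed immersion. One small slip: $\Sym^d$ induces $\PGL_2\to\PGL_{d+1}$ for every $d$, and parity only decides whether this lifts to a homomorphism $\PGL_2\to\GL_{d+1}$; your final paragraph in fact uses the correct statement, and your choice of $x_i\in C(L_i)$ with residue field exactly $L_i$ supplies a detail the paper leaves implicit.
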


\begin{proof}
The $d$-th Veronese $\PP^1\to\PP^d$ is $\PGL_2$-equivariant for the
$\Sym^d$ representation, hence twists: the cocycle defining $C$ produces
a $k$-morphism $C\to\SB(A_d)$ of degree $d$, where $A_d$ is the algebra
of degree $d+1$ with $[A_d]=d[Q]=[Q]$, i.e.\ $A_d\cong M_{(d+1)/2}(Q)$
($d+1$ is even).  A linear embedding
$\SB(M_{(d+1)/2}(Q))\hookrightarrow\SB(M_m(Q))$ exists whenever
$(d+1)/2\le m$.  The embedding is obtained from the module structure over $Q$, not from
representation theory of $\PGL_2$: writing $a:=(d+1)/2\le m$, the inclusion
of right $Q$-modules $Q^{a}\hookrightarrow Q^{m}$ as the first $a$
coordinates identifies $\SB(M_a(Q))$, the variety of right ideals of reduced
dimension one in $M_a(Q)=\mathrm{End}_Q(Q^a)$, with a closed subvariety of
$\SB(M_m(Q))$; over $\kbar$ it becomes the linear embedding
$\PP^{d}\hookrightarrow\PP^{r}$ of a coordinate subspace.   Composing gives
$f:C\to X$ of degree $d$ over $k$.  The markings exist by hypothesis, and for
$k$ infinite one may choose them disjoint.  The twisted Veronese is a closed
immersion, so an automorphism of the stable map $f$ would be an automorphism
of $C$ fixing $f$, hence the identity; the resulting stable map is therefore
automorphism-free, and remains so after adding the markings.
\end{proof}

Thus the Brauer criterion \eqref{eq:AdC} is sharp for $d$ odd: under the
hypotheses of Proposition \ref{prop:converse}, a twisted Veronese curve on
$\SB(M_m(Q))$ supplies an automorphism-free $k$-point. 

\begin{proof}[Proof of Theorem \ref{thmB}]
The relation $[A]=d\,[C]$ of \eqref{eq:AdC} and the consequences (i)--(iv)
drawn from it are Theorem \ref{thm:amitsur}; sharpness for $d$ odd, by a
twisted Veronese, is Proposition \ref{prop:converse}.
\end{proof}

\section{Proof of Theorem 3: the parity dichotomy}\label{sec:proofC}

This section proves Theorem \ref{thmC}.  The target is $\PP^r$ and only the
markings are twisted; the mechanism is Springer's theorem \cite{Spr} applied
to the domain conic of the universal stable map, which carries divisors of
degrees $n$ and $d$.  We use it in the form: a conic over a field $K$ with a
closed point of odd degree is split, equivalently a quaternion class in
$\Br(K)[2]$ killed by an odd degree extension is trivial \cite{Spr}; see
\cite{GS} for the restriction--corestriction argument that deduces the second
formulation from the first.

Take $A$ split, so $X=\PP^r$, and twist only by $E$.  The
splitting $k^{r+1}=k\oplus k^r$ used in the proof of
Theorem~\ref{thm:noname} is defined over $k$ and is untouched by the
twisting, so the whole argument descends:

\begin{proposition}\label{prop:twistednoname}
For $\mathrm{char}\,k=0$, $r\ge1$, $d\ge3$ and any degree $n$ \'etale
algebra $E$,
\[
  \overline M_{0,E}(\PP^r,d)\;\sim_k\;M_{0,E|d}\times\Aff^{r(d+1)}_k ,
\]
where $M_{0,E|d}$ is the moduli of a genus zero curve equipped with an
$E$-marking and a degree $d$ divisor.
\end{proposition}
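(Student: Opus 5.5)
The plan is to run the proof of Theorem \ref{thm:noname} again, now inside the $E$-twisted setting, and check that each step is defined over $k$.

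\emph{Step 1: a global model of the interior.} Over $\kbar$ the interior of $\overline M_{0,E}(\PP^r,d)$ is the quotient
\[
\big(\mathrm{Emb}_E\times V^{\mathrm{bpf}}\big)/H ,
\]
with $H=(\GL_2\times\mathbb G_m)/Z$. This is the $E$-analogue of \eqref{eq:interiorX} and of the first line of the proof of Theorem \ref{thm:noname}. Since $\mathrm{Emb}_E\subset\Res_{E/k}(\PP^1_E)$ is defined over $k$, the $H$-action on $\mathrm{Emb}_E\times V$ is a $k$-action of the split group $H$.

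\emph{Step 2: the conic issue.} A $k$-point of the quotient is a stable map whose domain is a possibly nonsplit conic. The presentation above does not see this, because it uses only the split $\PP^1$ and so only parametrizes maps from $\PP^1_k$ at $k$-points. I do not need every $k$-point, however. It is enough to exhibit a birational equivalence of $k$-varieties between the two coarse spaces. Both sides are forms of the split statement for the same twist, namely $S_n$ acting on the markings, so the comparison can be made on function fields with Galois descent. Concretely, I would argue as follows.
\begin{enumerate}[label=(\alph*)]
\item The split $k$-model $\mathrm{Emb}_E\times V$ with $H$ acting has the same Galois-invariant function field as the twisted Kontsevich space: ${}^\rho$ acts only on the marking factor, and the $H$-quotient commutes with that twist.
\item The $\kbar$-birational map of Theorem \ref{thm:noname}, written for the marking set $S$, is $\Sym(S)$-equivariant, since it only projects $V=A\oplus B$ away from markings. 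It therefore descends to a $k$-birational map of the $E$-forms.
\end{enumerate}

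\emph{Step 3: the vector bundle lemma.} Apply Lemma \ref{lem:noname} to $G=H$, to the base $\mathrm{Emb}_E\times A$, and to the $H$-equivariant vector bundle $\mathrm{Emb}_E\times V\to\mathrm{Emb}_E\times A$ with fibre $B=(\Sym^d)^{r}$, of rank $r(d+1)$. The lemma needs a generically free action, which holds for $d\ge3$ because a general binary form has trivial $\PGL_2$-stabilizer. In characteristic zero Rosenlicht's theorem then supplies the required $H$-torsor on a dense open set. The splitting $k^{r+1}=k\oplus k^r$ is defined over $k$ and commutes with the marking twist.

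The lemma gives
\[
\big(\mathrm{Emb}_E\times V\big)/H\ \sim_k\ \big((\mathrm{Emb}_E\times\PP(A))/\PGL_2\big)\times\Aff^{r(d+1)}_k .
\]
The first factor on the right is by definition a model of $M_{0,E|d}$: the generic fibre of the quotient is the generic genus zero curve, whose conic is typically nonsplit, carrying an $E$-marking and a degree $d$ divisor. Finally, Step 1 of Proposition \ref{prop:kRatX}, transposed via convexity of $\PP^r$, shows that the interior is dense. A birational model of the interior is therefore a birational model of the whole space.

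\emph{Main obstacle.} The delicate point is Step 2. One must make sure the quotient is formed as an honest $k$-variety whose points correspond to maps from conics, and not only from $\PP^1$. The torsor $\mathrm{Emb}_E\times V^{\mathrm{bpf}}\to(\ldots)/H$ is a $k$-torsor, but its generic fibre is a nontrivial torsor, which encodes the generic domain conic. I would handle this by doing everything at the level of $H$-invariant rational functions on a $k$-variety, where Lemma \ref{lem:noname} is already phrased ($k(E)^G=k(X)^G(t_1,\dots,t_m)$). No trivialization of the torsor is needed there, and that trivialization is exactly what fails in Theorem \ref{thmC}(c).
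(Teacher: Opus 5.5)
Your Step 3 is the paper's argument made explicit. The paper's proof is only the remark that the splitting $k^{r+1}=k\oplus k^r$ from Theorem \ref{thm:noname} is defined over $k$ and untouched by the marking twist, so the argument descends. Applying Lemma \ref{lem:noname} over $k$ to the $H$-equivariant bundle $\mathrm{Emb}_E\times V\to\mathrm{Emb}_E\times A$ is the precise form of that remark.

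Because the lemma holds over an arbitrary field, Step 2 is unnecessary, and 2(b) should be dropped. The birational map of Lemma \ref{lem:noname} depends on a non-canonical trivialisation of the descended bundle $\mathcal F$ over an open set. So it is not automatically $\Sym(S)$-equivariant just because the projection $V\to A$ is, and "it therefore descends" is unjustified.

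The ``conic issue'' is also not an issue. The $k$-points of the geometric quotient $(\mathrm{Emb}_E\times V^{\mathrm{bpf}})/H$ whose fibre is a nontrivial $H$-torsor are exactly the maps from nonsplit conics, so the $k$-model already sees them.

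The one real error is the justification of generic freeness, and you share it with the paper's proof of Theorem \ref{thm:noname}. It is false that a general binary form of degree $d\ge3$ has trivial stabiliser in $\PGL_2$:
\begin{itemize}
\item $\PGL_2$ is sharply $3$-transitive on $\PP^1$, so a general cubic has stabiliser $S_3$.
\item A general quartic has stabiliser the Klein four-group, since the double transpositions preserve the cross-ratio.
\item Only for $d\ge5$ is the general stabiliser trivial.
\end{itemize}

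For $n\ge1$ the conclusion you need still holds. Every nontrivial element of these finite groups fixes at most two points of $\PP^1_{\kbar}$, so a general marked point forces the image of the stabiliser in $\PGL_2$ to be trivial. An element $(\mu\cdot\mathrm{id},\lambda)$ fixing $f$ then has $\lambda=\mu^d$, i.e.\ it lies in $Z$. Hence $H$ acts generically freely on $\mathrm{Emb}_E\times A$, and the argument goes through.

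For $n=0$ (i.e.\ $E=k^0$) and $d\in\{3,4\}$ this fails. The action of $H$ on $A$ has generic stabiliser $S_3$, resp.\ $(\mathbb Z/2)^2$, so Lemma \ref{lem:noname} does not apply with this base. Those cases need a separate argument or must be excluded.
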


\begin{corollary}\label{cor:parity}
Let $\mathrm{char}\,k=0$, $r\ge1$, $d\ge3$, and let $E$ be a degree $n$
\'etale $k$-algebra.
\begin{enumerate}[label=(\roman*)]
\item If $\gcd(n,d)$ is odd, then $\overline M_{0,E}(\PP^r,d)$ is stably
rational over $k$.
\item If in addition the marking data supply three $k$-rational sections ---
for instance if $E$ has three factors isomorphic to $k$ --- then
$\overline M_{0,E}(\PP^r,d)$ is $k$-rational.
\item If $n$ and $d$ are both even, the class
$\alpha\in\Br\big(k(M_{0,E|d})\big)[2]$ of the generic $\PGL_2$-torsor ---
equivalently, of the generic domain conic $\mathcal C$ --- obstructs the
trivialization used in (i), and need not vanish. 
\end{enumerate}
\end{corollary}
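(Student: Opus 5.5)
Via Proposition \ref{prop:twistednoname}, it suffices to understand $M_{0,E|d}$: once we know $M_{0,E|d}$ is stably rational (resp.\ rational), the factor $\Aff^{r(d+1)}_k$ gives the same for $\overline M_{0,E}(\PP^r,d)$. So the plan is to study the generic object of $M_{0,E|d}$. Over the function field $K=k(M_{0,E|d})$, or over a dense open where $\PGL_2$ acts freely on $\mathrm{Emb}_E\times\PP^d$ (free because $d\ge3$ forces trivial generic stabiliser, and Rosenlicht applies in characteristic $0$), the quotient map is a $\PGL_2$-torsor. Its class in $H^1(K,\PGL_2)$ is the generic domain conic $\mathcal C$, with class $\alpha\in\Br(K)[2]$.

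For (i), note that $\mathcal C$ carries two closed subschemes defined over $K$. The $E$-marking gives $\Spec(E\otimes K)\subset\mathcal C$ of degree $n$, and the universal degree $d$ divisor has degree $d$. Hence $\mathcal C$ carries zero-cycles of degrees $n$ and $d$, so the index of $\mathcal C$ divides $\gcd(n,d)$. Since the index is $1$ or $2$, an odd gcd gives index $1$. Concretely, some closed point has odd degree, so Springer's theorem \cite{Spr} gives $\mathcal C\cong\PP^1_K$ and $\alpha=0$.

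The torsor is then generically trivial, and I would use this to rationalize. Since $\alpha=0$, $\mathcal C$ has a $K$-point, and I expect the real work to be here. The plan is to trivialize $\mathcal C\cong\PP^1_K$ rationally, so that $\mathrm{Emb}_E\times\PP^d$ becomes birational to $M_{0,E|d}\times\PGL_2$. Since $\PGL_2$ is $k$-rational, $M_{0,E|d}\times\PGL_2$ is $k$-rational because $\mathrm{Emb}_E\times\PP^d$ is: $\mathrm{Emb}_E$ is open in $\Res_{E/k}\PP^1_E$, as shown in Step 2 of Proposition \ref{prop:kRatX}. This gives stable rationality of $M_{0,E|d}$ with $\PGL_2$-stabilization of dimension $3$.

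The obstacle is that a generically trivial torsor yields a birational splitting $Y\times\PGL_2\sim X$ only after a rational section is chosen over $Y$. That section exists precisely because $\alpha=0$ in $\Br(K)$: a rational point of $\mathcal C_K$ extends to a rational section over a dense open of the base. One must also check that the relevant torsor is the $\PGL_2$-torsor and not a $\GL_2$-lift. Here the group $H$ of Theorem \ref{thm:noname} is handled by the same reduction, since its $\mathbb G_m$ part is special.

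For (ii), three $k$-rational markings allow the rigidification of Proposition \ref{prop:claim22}/Step 3 of Proposition \ref{prop:kRatX}. This produces a $k$-rational slice $W\times\PP^d$, open in $\Res_{E'/k}\PP^1\times\PP^d$, so $M_{0,E|d}$ is $k$-rational outright.

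For (iii), if $n,d$ are both even, the cycles above only show that $2$ annihilates $\alpha$, which is automatic. The trivialization in (i) needs $\alpha=0$, so $\alpha$ is exactly the obstruction. To show $\alpha$ need not vanish, I would exhibit one specialization. Take $E$ with all factors quadratic splitting a nonsplit $Q$, and take a degree $d$ effective divisor on $\SB(Q)$, which exists since $d$ is even. This gives a point of $M_{0,E|d}$ whose fibre conic is nonsplit. By specialization, the generic class cannot vanish on any model where this point lies in the torsor locus.
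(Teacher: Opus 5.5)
Your proposal is correct and is essentially the paper's own proof. You reduce to $M_{0,E|d}$ by Proposition~\ref{prop:twistednoname}, then use the degree-$n$ and degree-$d$ cycles on the generic domain conic together with Springer's theorem to split it when $\gcd(n,d)$ is odd. Trivializing the generic $\PGL_2$-torsor then makes $M_{0,E|d}\times\PGL_2\sim_k\mathrm{Emb}_E\times\PP^d$ $k$-rational. For (ii) you rigidify with three $k$-points, and for (iii) you identify $\alpha$ as the obstruction.

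Your specialization argument in (iii) goes beyond the paper: a $k$-point of the free locus whose fibre conic is $\SB(Q)$, plus injectivity of the Brauer group of the smooth free locus into $\Br(K)$, actually proves the ``need not vanish'' claim that the paper only asserts.

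One justification needs correcting. Generic freeness comes from the marked points, not from $d\ge3$: a general degree $3$ divisor alone has stabilizer $S_3$, and a general degree $4$ divisor alone has stabilizer the Klein four-group. So for $n=0$ and $d\in\{3,4\}$ there is no generic torsor. The paper's Theorem~\ref{thm:noname} has the same caveat.
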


\begin{proof}
Over the generic point of $M_{0,E|d}$ the universal curve is a conic
$\mathcal{C}$ carrying a divisor of degree $n$ (the markings) and one of
degree $d$, hence a divisor class of degree $\gcd(n,d)$.

(i)  If $\gcd(n,d)$ is odd then $\mathcal C$ carries a class of odd degree,
so it is split by Springer's theorem \cite{Spr}; the generic $\PGL_2$-torsor is
therefore trivial and admits a rational section.  Trivializing it identifies
$M_{0,E|d}$, birationally, with a quotient of the $k$-rational
$\Res_{E/k}(\PP^1_E)\times\PP^d$ by a trivial torsor, so
$M_{0,E|d}\times\PGL_2$ is $k$-rational; feeding this into the birational
product formula of Proposition \ref{prop:twistednoname} gives that
$\overline M_{0,E}(\PP^r,d)\times\PGL_2$ is $k$-rational, which is stable
rationality.

(ii)  Three $k$-rational sections rigidify the source $\PGL_2$, as
in Proposition \ref{prop:claim22}, so no torsor is left to trivialize and the
birational model is itself $k$-rational.

(iii)  With $n$ and $d$ both even no class of odd degree is available, the
conic can be nonsplit, and its class is precisely the obstruction to
trivializing the generic torsor.
\end{proof}

\begin{proof}[Proof of Theorem \ref{thmC}]
The proof follows from the birational product formula in Proposition \ref{prop:twistednoname}, and Corollary
\ref{cor:parity}.
\end{proof}

\section{Proof of Theorem 4: the M\"obius component on $\overline M_{0,n}$}
\label{sec:moebius-component-title}
\label{sec:moebius-component}
\label{sec:proofD}

This section proves Theorem \ref{thmD}.  The target is now
$\overline M_{0,n}$ itself, where the general theory used in
\S\ref{sec:Pr}--\S\ref{sec:proofC} is unavailable: $\overline M_{0,n}$ is not
convex for $n\ge7$, so $\overline M_{0,m}(\overline M_{0,n},\beta)$ need be
neither smooth nor irreducible, and no birational model of the kind used above
is available.  The input is the M\"obius family of \S\ref{sec:model}.

Throughout this section $k$ is a field, $k_s$ a separable closure,
$\Gamma=\mathrm{Gal}(k_s/k)$, and $n\ge 5$.  We write
$X=\overline M_{0,n}$, and for $2\le |I|\le n-2$ we write $\delta_I=\delta_{I^c}$
for the boundary divisor whose generic point is a two-component curve
carrying the markings $I$ on one component.  For
$2\le j\le\lfloor n/2\rfloor$ put $B_j=\sum_{|I|=j}\delta_I$.
Recall $\Aut(X)=S_n$ for $n\ge 5$ \cite{BM}, so the twisted
$k$-forms of $X$ are classified by
$H^1(\Gamma,S_n)=\{\text{degree $n$ \'etale $k$-algebras}\}$; we write
$X_E=\overline M_{0,E}$ for the form attached to $E$.

\subsection{The M\"obius component}
\label{ssec:moebius}

Let $\calR^\circ\cong\PGL_2^{\,S}$ be the space of $n$-tuples of
degree one maps $\PP^1\to\PP^1$ of \S\ref{sec:universal} and for $g=(g_1,\dots,g_n)\in \mathcal{R}^o$ let
\[
  \mu_g:\PP^1\dashrightarrow \overline M_{0,n},\qquad
  t\longmapsto \big[\PP^1;\,g_1(t),\dots,g_n(t)\big].
\]
Since $\PP^1$ is a smooth curve and $\overline M_{0,n}$ is proper, $\mu_g$
extends uniquely to a morphism, again denoted $\mu_g$; we call its
image, and by abuse the map itself, a \emph{M\"obius curve}.  Two
$\PGL_2$-actions are visible: the diagonal action on the target,
$u\cdot(g_i)=(ug_i)$, which does not change the point of $\overline M_{0,n}$
being traced, and reparametrisation of the source,
$(g_i)\cdot h=(g_ih)$.  Quotienting by the first alone returns the space
$Q\calR$ of \S\ref{sec:unirationality}, of dimension $3n-3$, whose points are
\emph{parametrized} M\"obius curves; the locus appearing in Proposition
\ref{prop:moebius} below is the further quotient by the second, which we
write $Q\calR^{\mathrm{un}}=Q\calR/\PGL_2^{\mathrm{pre}}$ and which has
dimension $3n-6$.  The two are therefore not the same space, and only the
latter maps to $\overline M_{0,0}(\overline M_{0,n},\beta)$, whose points are
maps up to reparametrization.

\begin{proposition}\label{prop:moebius}
Assume $n\ge 5$ and let $g=(g_1,...,g_n)\in \mathcal{R}^o$ be general.
\begin{enumerate}
\item[(i)] $\mu_g$ meets $\delta_{ij}$ exactly at the two fixed points of
$g_j^{-1}g_i$, transversally, and meets no other boundary divisor.  Hence
the class $\beta:=\mu_{g*}[\PP^1]$ satisfies
\[
  \beta\cdot\delta_I=\begin{cases}2,&|I|\in\{2,n-2\},\\
  0,&3\le|I|\le n-3,\end{cases}
\]
and $\beta$ is independent of $g$ and invariant under $S_n$.
\item[(ii)] $-K_{\overline M_{0,n}}\cdot\beta=2n$, so the expected dimension of
$\overline M_{0,0}(\overline M_{0,n},\beta)$ is $3n-6$.
\item[(iii)] The M\"obius locus
$Q\calR^{\mathrm{un}}:=\PGL_2\backslash\calR^\circ/\PGL_2^{\mathrm{pre}}\subset\overline M_{0,0}(\overline M_{0,n},\beta)$
is irreducible of dimension $3n-6$, and $\mu_g$ is a free map for
\emph{every} $g$ whose M\"obius curve meets the interior $M_{0,n}$ --- in
particular for general $g$, and with no hypothesis on $\mathrm{char}\,k$.
\item[(iv)] Consequently $Q\calR^{\mathrm{un}}$ is dense in a unique
irreducible component $\mathcal{M}^{\mathrm{M\ddot ob}}$ of
$\overline M_{0,0}(\overline M_{0,n},\beta)$, this component has the expected
dimension, and $\overline M_{0,0}(\overline M_{0,n},\beta)$ is smooth at its
general point.
\item[(v)] $\mathcal{M}^{\mathrm{M\ddot ob}}$ is stable under
$\Aut(\overline M_{0,n})=S_n$.
\end{enumerate}
\end{proposition}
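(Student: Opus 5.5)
For (i), I would work on the interior and use the modular description of the boundary. A point $\mu_g(t)$ lies in $M_{0,n}$ exactly when the $n$ values $g_1(t),\dots,g_n(t)$ are distinct. Collisions happen only when $g_i(t)=g_j(t)$, that is, when $t$ is a fixed point of $h_{ij}:=g_j^{-1}g_i$. For general $g$ each $h_{ij}$ is regular semisimple, so it has two distinct fixed points. The fixed-point sets of different pairs are disjoint, by the same choice as in Lemma \ref{lem:Rst-dense}. So at every $t$ at most two values collide. Near such a $t_0$ the limit stable curve is obtained by bubbling off the colliding pair, and this lands in $\delta_{ij}$ only, never in $\delta_I$ with $|I|\ge3$.

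For transversality I would use the local coordinate on $\overline M_{0,n}$ near the generic point of $\delta_{ij}$. It is the cross-ratio-type function $(x_i-x_j)$, normalised by three other points, and it vanishes to first order along $\delta_{ij}$. Its pullback is $g_i(t)-g_j(t)$, which has a simple zero at each fixed point because the fixed points of $h_{ij}$ are simple. This gives $\beta\cdot\delta_{ij}=2$ and $\beta\cdot\delta_I=0$ for $3\le|I|\le n-3$. Since $\Pic(\overline M_{0,n})\otimes\mathbb Q$ is spanned by boundary classes, these intersection numbers determine $\beta$ numerically. Hence $\beta$ is independent of general $g$ (also because $\calR^\circ$ is irreducible and $\beta$ is locally constant), and it is visibly $S_n$-invariant.

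For (ii), I would insert the standard formula $-K_{\overline M_{0,n}}=\sum_{j\ge2}\big(\tfrac{(j-2)(n-j)}{n-1}\cdot(-1)+\dots\big)$, in the form $K=\sum_j\big(\tfrac{j(n-j)}{n-1}-2\big)B_j$. Only $B_2$ contributes, with coefficient $\tfrac{2(n-2)}{n-1}-2=-\tfrac{2}{n-1}$. Also $\beta\cdot B_2=2\binom n2=n(n-1)$. This yields $-K\cdot\beta=2n$, and then \eqref{eq:expdim} gives expected dimension $(n-3)+2n-3=3n-6$.

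For (iii), irreducibility and the dimension count come from $\calR^\circ\cong\PGL_2^S$ being irreducible of dimension $3n$. The left and right $\PGL_2$-actions are generically free and commute. The main obstacle is freeness: I must show $H^1(\PP^1,\mu_g^*T_X(-1))=0$, or at least that $\mu_g^*T_X$ is globally generated. My plan is to use the transitive family. The evaluation $\calR^\circ\times\PP^1\to\overline M_{0,n}$ is dominant, because its restriction to $M_{0,n}$ is a submersion: at $t$ with distinct values, perturbing each $g_i$ independently moves each $x_i(t)$ independently. This makes the differential surject onto $T_{x}M_{0,n}$ for every $t$ in $\mu_g^{-1}(M_{0,n})$.

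The induced map of sheaves $\mathcal O^{3n}\to\mu_g^*T_X$ is therefore surjective on a dense open set. By itself this only gives generic global generation. To upgrade it at the finitely many boundary points, I would compute in the local coordinates there. The family perturbs $x_i-x_j$ transversally, and it moves the remaining coordinates, which are cross-ratios on the bubble and on the main component, through the independent $g_k$. This gives surjectivity everywhere, so $\mu_g^*T_X$ is globally generated, i.e. a sum of $\mathcal O(a_i)$ with $a_i\ge0$. To reach $a_i\ge1$ I would take the kernel with the deformations fixing $\mu_g(t_0)$. This still generates away from $t_0$, which gives generation of $\mu_g^*T_X(-1)$, and that is freeness.

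For (iv), freeness gives unobstructedness, so the Kontsevich stack is smooth of dimension $3n-6$ at $[\mu_g]$. An irreducible locus of that dimension inside a component of that dimension at a smooth point is dense in the unique component through it. For (v), $S_n$ preserves $\beta$ and permutes the $g_i$, so it preserves $Q\calR^{\mathrm{un}}$, and hence preserves its closure.
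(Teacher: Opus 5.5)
Your outline follows the paper's proof in (i), (ii), (iv) and (v). It also matches the central computation of (iii): at any $t$ with $\mu_g(t)\in M_{0,n}$, the differential $\mathfrak{pgl}_2^{\oplus n}\to T_{X,\mu_g(t)}$ is onto because each $\mathfrak{pgl}_2\to T_{\PP^1,g_i(t)}$ is.

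Where you diverge is in passing from this to freeness, and there you are missing the one-line splitting argument the paper uses. Write $\mu_g^*T_X\cong\bigoplus_j\calO(a_j)$. If some $a_j<0$, every global section lies in the proper subbundle $\bigoplus_{a_j\ge0}\calO(a_j)$. So surjectivity of $H^0(\mu_g^*T_X)\otimes\calO\to\mu_g^*T_X$ at a \emph{single} point already forces all $a_j\ge0$. On $\PP^1$, generic generation by global sections is therefore not weaker than freeness, and your local computation at the boundary points is unnecessary.

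That computation is also what limits your result. It uses the fact that near a general point of $\delta_{ij}$ the space $\overline M_{0,n}$ is an open piece of $(\PP^1)^{n-3}$ with $x_i-x_j$ as a local equation. So it only applies when the curve meets the boundary at general points of the $\delta_{ij}$, i.e.\ for general $g$. That is enough for (iv). But (iii) asserts freeness for every $g$ whose M\"obius curve meets $M_{0,n}$, and your route does not reach that. The single-point criterion is what the paper uses to state (iii) in that generality.

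Your final step should be deleted, not repaired. Global generation of $\mu_g^*T_X(-1)$ means $a_i\ge1$, which is \emph{very} freeness. Freeness is $H^1(\PP^1,\mu_g^*T_X(-1))=0$, i.e.\ $a_i\ge0$, as you correctly said at the outset, and you already had it one sentence earlier. By your own standard that step is also incomplete, since sections vanishing at $t_0$ are only shown to generate away from $t_0$. Again it is the splitting argument, not a further local check, that would close it.

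Finally, for $\dim Q\calR^{\mathrm{un}}=3n-6$ it does not suffice that the left and right $\PGL_2$-actions are each generically free and commute. For $n=1$ both are free, yet $\PGL_2\times\PGL_2$ has $3$-dimensional stabilizers on $\PGL_2$. You need the paper's computation: $ug_ih=g_i$ for all $i$ forces $u$ to commute with every $g_jg_i^{-1}$, hence $u=h=1$ for general $g$ once $n\ge3$.
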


\begin{proof}
(i) The markings $i$ and $j$ collide at $t$ precisely when
$g_j^{-1}g_i$ fixes $t$; for general $g$ the element $g_j^{-1}g_i$ is
regular semisimple, so this happens at exactly two points, each a simple
zero of $g_i-g_j$, whence transversality.  For general $g$ no three of
the $g_i$ agree at any $t$, since $g_j^{-1}g_i$ and $g_k^{-1}g_i$ have no
common fixed point; so $\mu_g$ meets no $\delta_I$ with $|I|\ge 3$ and
$|I^c|\ge3$.  The boundary divisors span
$\mathrm{Pic}(\overline M_{0,n})\otimes\mathbb{Q}$, so these numbers determine
$\beta$, and they are manifestly permutation invariant.

(ii) With Keel's formula as above, $\beta\cdot B_2=2\binom{n}{2}=n(n-1)$
and $\beta\cdot B_j=0$ for $j\ge3$, so
\[
  -K\cdot\beta=-c_2\,n(n-1)
  =\Big(2-\tfrac{2(n-2)}{n-1}\Big)n(n-1)=2n .
\]
Then \eqref{eq:expdim} gives $(n-3)+2n-3=3n-6$.  

(iii) The stabiliser of a general $g$ under
$\PGL_2\times\PGL_2$ is trivial: $ug_ih=g_i$ for all $i$ forces
$g_i^{-1}ug_i=h^{-1}$ for all $i$, hence $u=h=1$ for general $g$.  So
$Q\calR^{\mathrm{un}}$ is irreducible of dimension $3n-6$.  For freeness we argue directly, rather than by quoting the general
principle that a dominating family of rational curves has free general
member; the M\"obius family is explicit enough to make the computation
shorter than the citation, and the computation is characteristic-free.

The tangent space to $\underline{\Mor}(\PP^1,X)$ at $\mu_g$ is
$H^0(\PP^1,\mu_g^*T_X)$, and differentiating the family
$\calR^\circ$ at $g$ gives a $k$-linear map
\[
  c:\ \mathfrak{pgl}_2^{\oplus n}\longrightarrow H^0(\PP^1,\mu_g^*T_X),
  \qquad \xi=(\xi_i)\longmapsto c(\xi) .
\]
Here $c(\xi)$ is computed over the dual numbers $k[\epsilon]/(\epsilon^2)$
rather than with an exponential map, which is unavailable in small
characteristic: $\xi_i\in\mathfrak{pgl}_2$ corresponds to the point
$(1+\epsilon\xi_i)g_i$ of $\PGL_2\big(k[\epsilon]/(\epsilon^2)\big)$ lying
over $g_i$, the tuple $((1+\epsilon\xi_i)g_i)$ is a
$k[\epsilon]/(\epsilon^2)$-point of $\calR^\circ$ over $g$, and $c(\xi)$ is
the induced first-order deformation of $\mu_g$.
Let $t\in\PP^1$ be any point with $\mu_g(t)\in M_{0,n}$, i.e.\ with the $n$
points $g_i(t)$ pairwise distinct; all but finitely many $t$ are of this
kind, and by hypothesis at least one is.  At such a $t$,
\[
  T_{X,\mu_g(t)}\;=\;\Big(\bigoplus_i T_{\PP^1,g_i(t)}\Big)\Big/\mathfrak{pgl}_2 ,
\]
the quotient by the infinitesimal diagonal action, and the composite
$\mathfrak{pgl}_2^{\oplus n}\to T_{X,\mu_g(t)}$ sends $(\xi_i)$ to the class
of $(\xi_i(g_i(t)))_i$.  Since $\PGL_2$ acts transitively on $\PP^1$, the
evaluation $\mathfrak{pgl}_2\to T_{\PP^1,p}$ is surjective for every $p$ and
every characteristic, so $\bigoplus_i T_{\PP^1,g_i(t)}$ is hit, and hence so
is the quotient.  Therefore the evaluation map
$H^0(\PP^1,\mu_g^*T_X)\otimes\calO\to\mu_g^*T_X$ is surjective at $t$.

Now write $\mu_g^*T_X\cong\bigoplus_j\calO(a_j)$ (Grothendieck \cite{Gro57}).  If some
$a_j<0$ then $H^0(\PP^1,\mu_g^*T_X)=\bigoplus_{a_j\ge0}H^0(\calO(a_j))$, so
the image of the evaluation map lies in the proper subbundle
$\bigoplus_{a_j\ge0}\calO(a_j)$ at \emph{every} point, contradicting
surjectivity at $t$.  Hence $a_j\ge0$ for all $j$: $\mu_g^*T_X$ is nef and
$\mu_g$ is free.

(iv) Freeness gives $H^1(\PP^1,\mu_g^*T_X)=0$, hence smoothness of the moduli
space at $[\mu_g]$ and equality of actual and expected dimension there; since
$\dim Q\calR^{\mathrm{un}}$ already equals the expected dimension,
$Q\calR^{\mathrm{un}}$ is dense in a single component.

(v) $S_n$ permutes the factors of $\mathcal{R}^o$ and commutes with both $\PGL_2$
actions, and $\beta$ is $S_n$-invariant by (i).
\end{proof}

\subsection{Twisted forms: descent of the M\"obius component}

Let $E$ be a degree $n$ \'etale $k$-algebra and $X_E$ the corresponding
twisted form of $\overline M_{0,n}$.  Because the cocycle takes values in
$S_n$, the twisted form of $\calR^\circ\cong\PGL_2^{\,S}$ is the Weil restriction
\[
  R_E:=\Res_{E/k}\big(\PGL_{2,E}\big),
\]
a smooth affine $k$-group of dimension $3n$, and both $\PGL_2$-actions
of \S\ref{ssec:moebius} are defined over $k$ with the split group
$\PGL_{2,k}$ (the diagonal subgroup and the source reparametrisations are
$S_n$-invariant, and $\Gamma$ acts trivially on them).

\begin{lemma}\label{lem:weil}
$R_E$ is a $k$-rational variety with $R_E(k)$ Zariski dense (for $k$
infinite).
\end{lemma}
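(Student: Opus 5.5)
The plan is to reduce to a single field factor and then reuse the Cayley-transform chart from Lemma \ref{lem:PGL1rational}, exactly as in the proof of Theorem \ref{thm:Rcirc-rational}. Write $E=\prod_i L_i$ with $L_i/k$ finite separable. Weil restriction turns products of algebras into products of varieties, so
\[
R_E\cong\prod_i\Res_{L_i/k}\big(\PGL_{2,L_i}\big).
\]
A product of $k$-rational varieties is $k$-rational. It therefore suffices to treat $E=L$ a field.

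For $L$ a field, use the explicit affine chart of $\PGL_{2,L}$ rather than any general principle. The paper itself warns, in the proof of Theorem \ref{thm:Rcirc-rational}, that ``a Weil restriction of a rational variety is rational'' needs care. One concrete choice is the open subset $\{\delta\ne0\}\subset\PP(\Mat)_L$ of matrices $\left(\begin{smallmatrix}\alpha&\beta\\\gamma&\delta\end{smallmatrix}\right)$; normalising $\delta=1$ identifies it with $\Aff^3_L$. The condition $\det\ne0$ is then a nonempty open condition, so $\PGL_{2,L}$ contains a dense open $U_L\subset\Aff^3_L$. Alternatively one can take the split case of the Cayley transform of Lemma \ref{lem:PGL1rational}, via $\PGL_2\cong\mathrm{SO}_3$.

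Weil restriction along a finite separable extension preserves open immersions and density \cite[\S7.6, Prop.~2]{BLR}. Hence $\Res_{L/k}U_L$ is a dense open subscheme both of $\Res_{L/k}\PGL_{2,L}$ and of $\Res_{L/k}\Aff^3_L\cong\Aff^{3[L:k]}_k$, the latter after choosing a $k$-basis of $L$. This gives $k$-rationality of $R_E$, of dimension $3n$.

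For density of $R_E(k)$ when $k$ is infinite, argue as in Theorem \ref{thm:curves}(b). Suppose $Z\subsetneq R_E$ is a proper closed subset. Then $Z\cap\Res_{E/k}U_E$ is proper closed in a nonempty open subset of $\Aff^{3n}_k$. Its complement is a nonempty open subset of affine space over an infinite field, so it contains a $k$-point, since a nonzero polynomial does not vanish on all of $k^{3n}$. That $k$-point lies in $R_E(k)$ but not in $Z$, so $R_E(k)$ is not contained in any proper closed subset. Alternatively, one can cite that a connected reductive group over an infinite perfect field is unirational, but the direct chart argument avoids that.

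The only real point needing care is the compatibility of Weil restriction with open immersions and density in the non-split case. This holds because the base change of $\Res_{L/k}Y$ to $k_s$ is $\prod_{\sigma}Y^\sigma$, a product over the embeddings $\sigma:L\to k_s$. So I expect no genuine obstacle; the statement is essentially a special case of the argument of Theorem \ref{thm:Rcirc-rational} with $\alpha=0$.
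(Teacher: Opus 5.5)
Your proposal is correct and follows essentially the same route as the paper: an explicit dense open chart of $\PGL_2$ inside $\Aff^3$, Weil restriction preserving open immersions (with density checked over $k_s$), and density of $k$-points of a nonempty open subset of $\Aff^{3n}_k$ over an infinite field. The only difference is cosmetic: you first split $E$ into field factors, whereas the paper applies $\Res_{E/k}$ to the chart directly and so does not need that step.
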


\begin{proof}
$\PGL_2$ over any field is an open subvariety of
$\PP(M_2)=\PP^3$, hence rational; choose a dense open
$U\subset\PGL_{2,E}$ with $U$ isomorphic to a dense open of
$\mathbb{A}^3_E$.  Weil restriction preserves open immersions, so
$\Res_{E/k}(U)$ is open in $R_E$ and open in
$\Res_{E/k}(\mathbb{A}^3_E)=\mathbb{A}^{3n}_k$; it is dense in $R_E$
because over $k_s$ it becomes $\prod U\subset\prod\PGL_2$.  For density of
$R_E(k)$ we use this chart rather than rationality in the abstract: $k$ is
infinite, so the $k$-points of the nonempty open
$\Res_{E/k}(U)\subset\mathbb{A}^{3n}_k$ are Zariski dense in it (the
complement of finitely many hypersurfaces in $\mathbb{A}^{3n}_k$ cannot
contain all $k$-points), and $\Res_{E/k}(U)$ is dense in $R_E$; hence
$R_E(k)$ is dense in $R_E$.
\end{proof}

\begin{theorem}\label{thm:twisted}
Let $n\ge5$, let $E$ be a degree $n$ \'etale $k$-algebra, and let $k$ be
infinite.  Then:
\begin{enumerate}
\item[(i)] $\beta$ and $\mathcal{M}^{\mathrm{M\ddot ob}}$ descend: there
is a canonical $k$-form
$\mathcal{M}^{\mathrm{M\ddot ob}}_E\subset\overline M_{0,0}(X_E,\beta)$.
On the open locus $R_E^{\mathrm{gf}}\subset R_E$ where the
$\PGL_2\times\PGL_2$-action is free, the map
$R_E^{\mathrm{gf}}\to\mathcal{M}^{\mathrm{M\ddot ob}}_E$ is a geometric
quotient in the category of $k$-varieties, with dense image.
\item[(ii)] $\mathcal{M}^{\mathrm{M\ddot ob}}_E$ is unirational over $k$
and $\mathcal{M}^{\mathrm{M\ddot ob}}_E(k)$ is Zariski dense.
\item[(iii)] Every $g\in R_E(k)$ determines a $k$-morphism
$\mu_g:\PP^1_k\to X_E$; the resulting family of $k$-rational curves is
Zariski dense in $X_E$.
\item[(iv)] The evaluation map
$\mathrm{ev}:R_E\times\PP^1\dashrightarrow X_E$, $(g,t)\mapsto\mu_g(t)$,
is a dominant $k$-rational map from a $k$-rational variety.  In
particular $X_E$ is unirational over $k$ and $X_E(k)$ is Zariski dense.
\end{enumerate}
\end{theorem}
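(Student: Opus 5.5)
The plan is to run everything through Galois descent from the split case, Proposition \ref{prop:moebius}, using $S_n$-equivariance at each step. The group $R_E$ and its $k$-rationality (Lemma \ref{lem:weil}) supply the $k$-points.

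\emph{Step (i).} Since $\beta$ is $S_n$-invariant (Proposition \ref{prop:moebius}(i)), the class is fixed by any cocycle $\Gamma\to S_n$. Hence the Kontsevich space $\overline M_{0,0}(X_E,\beta)$ is the twist of $\overline M_{0,0}(\overline M_{0,n},\beta)$, by the functoriality used in \S\ref{sec:kontsevich}.

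By Proposition \ref{prop:moebius}(v), the component $\mathcal M^{\mathrm{M\ddot ob}}$ is an $S_n$-stable closed subset. A Galois-stable closed subscheme of the $k_s$-space descends, so $\mathcal M^{\mathrm{M\ddot ob}}_E$ is defined over $k$.

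The map $g\mapsto[\mu_g]$ from $\calR^\circ$ is $S_n$-equivariant and is invariant under both split $\PGL_2$-actions, which commute with $S_n$. It therefore descends to a $k$-morphism $R_E\to\overline M_{0,0}(X_E,\beta)$ on the locus where $\mu_g$ is defined as a stable map.

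On the free locus $R_E^{\mathrm{gf}}$, I would argue as follows.
\begin{itemize}
\item Over $k_s$, the quotient $\PGL_2\backslash\calR^{\circ,\mathrm{gf}}/\PGL_2$ is a geometric quotient, since the action is free with trivial generic stabiliser (Proposition \ref{prop:moebius}(iii)).
\item Forming a geometric quotient by a $k$-group commutes with flat base change. The quotient can be formed over $k$ because the orbit map is $\Gamma$-equivariant; equivalently, one descends the $k_s$-quotient with its twisted descent datum.
\item The image is dense because $Q\calR^{\mathrm{un}}$ is dense in $\mathcal M^{\mathrm{M\ddot ob}}$ (Proposition \ref{prop:moebius}(iv)), and density can be checked over $k_s$.
\end{itemize}

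\emph{Steps (ii)--(iv).} For (ii), $R_E$ is $k$-rational and dominates $\mathcal M^{\mathrm{M\ddot ob}}_E$ by (i), so $\mathcal M^{\mathrm{M\ddot ob}}_E$ is unirational. Images of the dense set $R_E(k)$ (Lemma \ref{lem:weil}) are dense; the argument is the same as in Theorem \ref{thm:curves}(c).

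For (iii), a point $g\in R_E(k)$ gives a $k$-rational map $\PP^1_k\dashrightarrow X_E$, $t\mapsto[\PP^1;g(t)]$. Here $g(t)$ is an $E$-point of $\PP^1$, i.e.\ an $E$-marking, which is a point of $X_E$ on the open set where the geometric points are distinct. It extends to a morphism because $\PP^1_k$ is a smooth curve and $X_E$ is proper. The curve lies in $M_{0,n}$ generically as soon as $g$ lies in the nonempty open set where the $g_i(t)$ are not all colliding; this open set still has dense $k$-points.

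For (iv), the map $\mathrm{ev}$ descends as before. Its dominance is checked over $k_s$, where evaluation at a fixed $t_0$ is already onto $M_{0,n}$: every configuration is $(g_i(t_0))$ for some $g$. The source $R_E\times\PP^1$ is $k$-rational, which gives unirationality of $X_E$. Density of $X_E(k)$ follows by evaluating at $t_0\in\PP^1(k)$ on the dense set $R_E(k)$, exactly as in Theorem \ref{thm:curves}(c). Density of the family of curves in (iii) then follows.

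\emph{Main obstacle.} The delicate point is the quotient statement in (i): producing a geometric quotient over $k$ and identifying it with an open of $\mathcal M^{\mathrm{M\ddot ob}}_E$. I would first try Rosenlicht/free-action descent: show the $k_s$-quotient map $\calR^{\circ,\mathrm{gf}}\to Q\calR^{\mathrm{un}}$ is a $\PGL_2\times\PGL_2$-torsor onto an open of the component. Then use the fact that $Q\calR^{\mathrm{un}}\to\overline M_{0,0}$ is injective on points with matching dimension, and smooth at free points by (iv), to get an open immersion. This open immersion is $\Gamma$-equivariant and so descends.
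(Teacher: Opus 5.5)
Your proposal is correct and takes essentially the paper's route: Galois descent of the $S_n$-equivariant M\"obius construction and component (Proposition \ref{prop:moebius}(i),(v)), with $R_E=\Res_{E/k}(\PGL_{2,E})$ as a $k$-rational source with dense $k$-points (Lemma \ref{lem:weil}), and dominance of evaluation checked over $k_s$ at a $k$-point $t_0$ as in Theorem \ref{thm:curves}. Your extra care goes beyond the paper's sketch in two places where it is needed: in (iii), $\mu_g$ is only defined when $g_i\neq g_j$ over $k_s$ for all $i\neq j$ (pairwise non-coincidence is the right condition, not merely ``not all colliding''), so ``every $g\in R_E(k)$'' must be read as ``every $g$ in a dense open''; and the injectivity you invoke for the open immersion in (i) does hold for general $g$, because after absorbing the reparametrization the comparison map $t\mapsto u(t)\in\PGL_2$ between the two pointed trivial $\PP^1$-bundles extends over the finitely many pairwise-collision points (where, since $n\ge5$, three other points stay distinct) to a morphism $\PP^1\to\PGL_2$, which must be constant.
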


\begin{proof}
(i) is Proposition~\ref{prop:moebius}(i),(v) plus Galois descent: an
$S_n$-invariant class and an $S_n$-stable component descend along any
cocycle valued in $S_n$; the quotient presentation is the twist of
$Q\calR^{\mathrm{un}}=\PGL_2\backslash\calR^\circ/\PGL_2^{\mathrm{pre}}$.  The
locus $R_E^{\mathrm{gf}}$ is nonempty and defined over $k$ by the stabilizer
computation of Proposition \ref{prop:moebius}(iii); no globally defined
quotient morphism on all of $R_E$ is asserted.  (ii) follows from
Lemma~\ref{lem:weil}: a $k$-rational variety dominates
$\mathcal{M}^{\mathrm{M\ddot ob}}_E$, and $R_E(k)$ is dense and surjects
onto a dense set of $k$-points.  For (iii), the family
$Q\calR^{\mathrm{un}}\times\PP^1\dashrightarrow X$ is $S_n$-equivariant, hence
descends; for fixed $g\in R_E(k)$ the induced rational map
$\PP^1_k\dashrightarrow X_E$ extends to a morphism since $\PP^1_k$ is a
smooth curve and $X_E$ is proper.  Density holds because
$\mathrm{ev}$ is dominant and $R_E(k)$ is dense.  (iv) is then immediate,
and $X_E(k)\ne\emptyset$ is dense because each $\mu_g$ has source
$\PP^1_k$.
\end{proof}

\subsection{Degree one, and where the parity obstruction lives}

The one case in which the Kontsevich space of $\PP^1$ is completely
understood is degree one:
$\overline M_{0,n}(\PP^1,1)\cong\PP^1[n]$, the Fulton--MacPherson
compactification of the configuration space of $n$ points on $\PP^1$,
obtained from $(\PP^1)^n$ by blowing up the diagonals; the product of
the evaluation maps is the Fulton--MacPherson morphism
$\PP^1[n]\to(\PP^1)^n$, and the GIT quotient of its stable locus by
$\PGL_2$ is $\overline M_{0,n}$ \cite{FMac,HT,Swi}.
All of this is $S_n$-equivariant, so it twists.

\begin{proposition}\label{prop:degree1}
Let $E$ be a degree $n$ \'etale $k$-algebra.  Then:
\begin{enumerate}[label=(\roman*)]
\item the twisted form $\PP^1[n]_E$ of $\overline M_{0,n}(\PP^1,1)$ is a
$k$-rational variety;
\item the stabilisation morphism
\[
  \pi_E:\PP^1[n]_E\longrightarrow X_E
\]
is defined over $k$, and over the open locus $M_{0,E}$ it is a
$\PGL_2$-torsor.
\end{enumerate}
\end{proposition}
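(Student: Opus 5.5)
For (i) I would not use the Fulton--MacPherson blow-up description at all. Instead I would apply Proposition \ref{prop:kRatX}(a)(ii) with target $X=\PP^1$ ($n=1$ target factor) and degree $d_1=1$. The twisted form $\PP^1[n]_E$ is by definition the Kontsevich space $\overline M_{0,E}(\PP^1,1)$ with markings twisted by $E$. The twist touches only the markings, so the $S_n$-action on $\overline M_{0,n}(\PP^1,1)$ permuting the marks is exactly the relabelling that the $E$-marking formalism of \S\ref{ssec:Emarking} encodes, and $\PP^1[n]_E=\overline M_{0,E}(\PP^1,1)$. Proposition \ref{prop:kRatX}(a)(ii) then gives $k$-rationality for arbitrary $E$. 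Concretely, the slice $f_{1}=\mathrm{id}$ identifies the interior with $\mathrm{Emb}_E$, an open subset of $\Res_{E/k}(\PP^1_E)$. Convexity (Step 1 there) makes the interior dense. I would remark that this open subset is precisely the twisted configuration space, matching the Fulton--MacPherson picture.

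For (ii), I would first work over $k_s$. There the stabilisation (forget the map, keep the marked domain, contract unstable components) gives a morphism $\overline M_{0,n}(\PP^1,1)\to\overline M_{0,n}$. It is defined functorially for any family of stable maps, so it commutes with the $S_n$-action permuting the markings and acts trivially on the target $\PP^1$. Galois descent along the cocycle of $E$, valued in $S_n$, then produces $\pi_E$ over $k$. Over the open $M_{0,n}$, a point of the source with smooth domain is $(\PP^1\cong C;\, p_1,\dots,p_n)$ together with a degree one map $C\to\PP^1$. Equivalently it is $n$ distinct points of $\PP^1$, i.e.\ a point of $(\PP^1)^n$ minus the diagonals. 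On this locus, stabilisation is the quotient by the diagonal $\PGL_2$, acting on the target $\PP^1$, i.e.\ post-composition. I also need that the preimage of $M_{0,n}$ consists only of maps with smooth domain. This holds because a reducible domain of a degree one map has a contracted component, which carries at least two special points, and it becomes a stable curve with $\ge3$ special points; it therefore maps to the boundary unless it is contracted entirely, and a contracted component must carry at least two markings. So $\pi_E^{-1}(M_{0,E})$ is the twisted configuration space, and $\PGL_2$ acts freely on it by Lemma \ref{lem:stability}(ii), since distinct points are stable for $n\ge5$.

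The step I expect to be the main obstacle is upgrading "free action with geometric quotient $M_{0,n}$" to an honest $\PGL_2$-torsor over $k$. Over $k_s$ the map $\mathrm{Conf}_n\to M_{0,n}$ has a section given by normalising $p_1,p_2,p_3$ to $0,1,\infty$. This yields the trivialisation $\mathrm{Conf}_n\cong M_{0,n}\times\PGL_2$, so it is a torsor, and being a torsor is fppf-local and descends. The point to check carefully is that the untwisted $\PGL_{2,k}$ action commutes with the twisting cocycle. It does, as in Lemma \ref{claim:X}, since $\rho$ has trivial $\PGL_2$-component. The action therefore descends to the split group, and $\pi_E$ over $M_{0,E}$ is a $\PGL_{2,k}$-torsor. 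It need not be trivial over $k$, consistent with Theorem \ref{thm:FR}(c), because the local sections do not descend when $E$ lacks three $k$-factors.
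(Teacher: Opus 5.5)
Your part (i) is the paper's own argument: Proposition \ref{prop:kRatX}(a)(ii) with target $\PP^1$ and $d_1=1$. For (ii) you follow the paper's route as well: $S_n$-equivariance of stabilisation gives descent, and freeness of $\PGL_2$ on configurations gives the torsor. However, the step you add in between is false. The preimage of $M_{0,n}$ under stabilisation does \emph{not} consist only of maps with smooth domain. Your argument looks only at the contracted components, but the component that stabilisation removes is the degree one component $C_0$ itself, whenever it carries at most two special points. Concretely, glue $C_0\cong\PP^1$ to $C_1\cong\PP^1$ at one point, let $C_0\to\PP^1$ be an isomorphism carrying no markings, and let $C_1$ be contracted and carry all $n$ markings. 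This is a degree one stable map, since $C_1$ has $n+1\ge3$ special points. Stabilisation contracts $C_0$ and leaves the smooth curve $(C_1;p_1,\dots,p_n)\in M_{0,n}$. The same happens if $C_0$ carries exactly one marking and $C_1$ the other $n-1$. In Fulton--MacPherson terms, the exceptional divisors over the small diagonal, and over the diagonals where $n-1$ of the points coincide, dominate $M_{0,n}$. So $\pi_E^{-1}(M_{0,E})$ is strictly larger than the twisted configuration space.

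In fact the torsor assertion cannot hold on the full preimage at all. $\PP^1[n]_E$ is proper and $X_E$ is separated, so $\pi_E^{-1}(M_{0,E})\to M_{0,E}$ is proper. A $\PGL_2$-torsor, by contrast, has geometric fibres isomorphic to the affine group $\PGL_2$, so it is never proper. Part (ii) therefore has to be read as a statement about the restriction of $\pi_E$ to the open interior $M_{0,E}(\PP^1,1)\cong\mathrm{Emb}_E$ of maps with smooth domain, which maps onto $M_{0,E}$. The paper's one-line proof reads it this way tacitly when it speaks of ``the configuration'' having trivial stabiliser. If you delete the preimage claim and run the rest of your argument on that open subset, it is correct and more complete than the paper's:
\begin{enumerate}
\item the action there is free;
\item normalising $p_1,p_2,p_3$ to $0,1,\infty$ gives a section over $k_s$, hence $\mathrm{Conf}_n\cong M_{0,n}\times\PGL_2$;
\item since the cocycle of $E$ has trivial $\PGL_2$-component, this descends to a $\PGL_{2,k}$-torsor $\mathrm{Emb}_E\to M_{0,E}$, which in general is nontrivial, as you note.
\end{enumerate}
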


\begin{proof}
(i)  This is Proposition \ref{prop:kRatX}(a)(ii) applied to the target
$\PP^1$ --- that is, to $(\PP^1)^n$ with $n=1$ --- and the degree
$d_1=1$: for any \'etale marking algebra $E$ of degree $n$, the space
$\overline M_{0,E}(\PP^1,1)=\PP^1[n]_E$ is $k$-rational, the degree one
coordinate rigidifying the source $\PGL_2$ over $k$.  

(ii)  Both the Fulton--MacPherson morphism and the GIT quotient of its stable
locus by $\PGL_2$ are $S_n$-equivariant, so $\pi_E$ descends to $k$; over
$M_{0,E}$ the configuration has trivial stabiliser, so the action is free
there and $\pi_E$ is a $\PGL_2$-torsor.
\end{proof}

\begin{proof}[Proof of Theorem \ref{thmD}]
  The numerical data $\beta\cdot\delta_{ij}=2$,
$\beta\cdot\delta_I=0$ for $3\le|I|\le n-3$ and $-K\cdot\beta=2n$, the freeness
of the general M\"obius map, and the resulting generically smooth irreducible
component of $\overline M_{0,0}(\overline M_{0,n},\beta)$ of the expected
dimension $3n-6$, are Proposition \ref{prop:moebius}.  The $S_n$-stability of
that component, its descent to every twisted form $X_E$, its domination by the
$k$-rational group $\Res_{E/k}(\PGL_{2,E})$, and the resulting
$k$-unirationality of $X_E$ together with a Zariski-dense set of $k$-points
lying on explicit $k$-rational M\"obius curves, are Theorem \ref{thm:twisted}.
\end{proof}


\begin{thebibliography}{99}

\bibitem{AV02} D.~Abramovich, A.~Vistoli, \emph{Compactifying the space of stable maps}, J.\ Amer.\ Math.\ Soc.\ \textbf{15} (2002), no.~1, 27--75.

\bibitem{Am} S.~A.~Amitsur, \emph{Generic splitting fields of central simple algebras}, Ann.\ of Math.\ (2) \textbf{62} (1955), 8--43.

\bibitem{BL} P.~Belorousski, \emph{Chow rings of moduli spaces of pointed elliptic curves}, PhD thesis, University of Chicago, 1998; A.~Logan, \emph{The Kodaira dimension of moduli spaces of curves with marked points}, Amer.\ J.\ Math.\ \textbf{125} (2003), no.~1, 105--138.

\bibitem{Bi73} A.~Bia\l ynicki-Birula, \emph{Some theorems on actions of algebraic groups}, Ann.\ of Math.\ (2) \textbf{98} (1973), 480--497.

\bibitem{Bog} F.~A.~Bogomolov, \emph{Rationality of the moduli of hyperelliptic curves of arbitrary genus}, in Conference on Algebraic Geometry (Vancouver, 1984), CMS Conf.\ Proc.\ \textbf{6}, Amer.\ Math.\ Soc., Providence, RI, 1986, pp.~17--37.

\bibitem{BK85} F.~A.~Bogomolov, P.~I.~Katsylo, \emph{Rationality of some quotient varieties}, Mat.\ Sb.\ (N.S.) \textbf{126(168)} (1985), no.~4, 584--589.

\bibitem{BLR} S.~Bosch, W.~L\"utkebohmert, M.~Raynaud, \emph{N\'eron Models}, Ergebnisse der Mathematik und ihrer Grenzgebiete (3), vol.~21, Springer-Verlag, Berlin, 1990.

\bibitem{BM} A.~Bruno, M.~Mella, \emph{The automorphism group of $\overline M_{0,n}$}, J.\ Eur.\ Math.\ Soc.\ \textbf{15} (2013), no.~3, 949--968.

\bibitem{Cas} A.-M.~Castravet, \emph{The Cox ring of $\overline M_{0,6}$}, Trans.\ Amer.\ Math.\ Soc.\ \textbf{361} (2009), no.~7, 3851--3878.

\bibitem{CT} A.-M.~Castravet, J.~Tevelev, \emph{Rigid curves on $\overline M_{0,n}$ and arithmetic breaks}, in Compact Moduli Spaces and Vector Bundles, Contemp.\ Math.\ \textbf{564}, Amer.\ Math.\ Soc., Providence, RI, 2012, pp.~19--67 (arXiv:1109.6705).

\bibitem{CTS07} J.-L.~Colliot-Th\'el\`ene, J.-J.~Sansuc, \emph{The rationality problem for fields of invariants under linear algebraic groups (with special regards to the Brauer group)}, in Algebraic Groups and Homogeneous Spaces, Tata Inst.\ Fund.\ Res.\ Stud.\ Math.\ \textbf{19}, Tata Inst.\ Fund.\ Res., Mumbai, 2007, pp.~113--186.

\bibitem{Cos} I.~Coskun, \emph{The Kontsevich moduli spaces of stable maps}, lecture notes, MIT 18.727 (Spring 2006), available at \url{https://ocw.mit.edu/courses/18-727-topics-in-algebraic-geometry-intersection-theory-on-moduli-spaces-spring-2006/}.

\bibitem{Dol87} I.~Dolgachev, \emph{Rationality of fields of invariants}, in Algebraic Geometry (Bowdoin, 1985), Proc.\ Sympos.\ Pure Math.\ \textbf{46}, Part~2, Amer.\ Math.\ Soc., Providence, RI, 1987, pp.~3--16.

\bibitem{DO} I.~Dolgachev, D.~Ortland, \emph{Point sets in projective spaces and theta functions}, Ast\'erisque, no.~165 (1988).

\bibitem{DR15} A.~Duncan, Z.~Reichstein, \emph{Versality of algebraic group actions and rational points on twisted varieties}, J.\ Algebraic Geom.\ \textbf{24} (2015), no.~3, 499--530 (with an appendix containing a letter from J.-P.~Serre).

\bibitem{EH} D.~Eisenbud, J.~Harris, \emph{The Kodaira dimension of the moduli space of curves of genus $\ge23$}, Invent.\ Math.\ \textbf{90} (1987), no.~2, 359--387.

\bibitem{FM14} B.~Fantechi, A.~Massarenti, \emph{On the rigidity of moduli of curves in arbitrary characteristic}, Int.\ Math.\ Res.\ Not.\ IMRN 2017, no.~8, 2431--2463.

\bibitem{Fa} G.~Farkas, \emph{Birational aspects of the geometry of $\overline M_g$}, in Surveys in Differential Geometry, Vol.~XIV, 57--110, Int.\ Press, Somerville, MA, 2009.

\bibitem{FHR} M.~Florence, N.~Hoffmann, Z.~Reichstein, \emph{On the rationality problem for forms of moduli spaces of stable marked curves of positive genus}, Ann.\ Sc.\ Norm.\ Super.\ Pisa Cl.\ Sci.\ (5) \textbf{XXII} (2021), 1091--1104 (arXiv:1709.05696).

\bibitem{FR1} M.~Florence, Z.~Reichstein, \emph{The rationality problem for forms of $\overline M_{0,n}$}, Bull.\ Lond.\ Math.\ Soc.\ \textbf{50} (2018), no.~1, 148--158.

\bibitem{FMac} W.~Fulton, R.~MacPherson, \emph{A compactification of configuration spaces}, Ann.\ of Math.\ (2) \textbf{139} (1994), no.~1, 183--225.

\bibitem{FP} W.~Fulton, R.~Pandharipande, \emph{Notes on stable maps and quantum cohomology}, in Algebraic Geometry---Santa Cruz 1995, Proc.\ Sympos.\ Pure Math.\ \textbf{62}, Part 2, Amer.\ Math.\ Soc., Providence, RI, 1997, pp.~45--96.

\bibitem{GS} P.~Gille, T.~Szamuely, \emph{Central Simple Algebras and Galois Cohomology}, Cambridge Studies in Advanced Mathematics \textbf{101}, Cambridge Univ.\ Press, Cambridge, 2006.

\bibitem{Gro57} A.~Grothendieck, \emph{Sur la classification des fibr\'es holomorphes sur la sph\`ere de Riemann}, Amer.\ J.\ Math.\ \textbf{79} (1957), 121--138.

\bibitem{HM} J.~Harris, D.~Mumford, \emph{On the Kodaira dimension of the moduli space of curves}, Invent.\ Math.\ \textbf{67} (1982), no.~1, 23--88.

\bibitem{HT} B.~Hassett, Y.~Tschinkel, \emph{Integral points and effective cones of moduli spaces of stable maps}, Duke Math.\ J.\ \textbf{120} (2003), no.~3, 577--599.

\bibitem{HTZ} B.~Hassett, Y.~Tschinkel, Z.~Zhang, \emph{Rationality of forms of $\overline M_{0,n}$}, J.\ Lond.\ Math.\ Soc.\ \textbf{111} (2025), no.~2, Paper No.\ e70097.

\bibitem{Hir89} A.~Hirschowitz, \emph{La rationalit\'e des sch\'emas de Hilbert de courbes gauches rationnelles suivant Katsylo}, in Algebraic Curves and Projective Geometry (Trento, 1988), Lecture Notes in Math.\ \textbf{1389}, Springer, Berlin, 1989, pp.~87--90.

\bibitem{HK} Y.~Hu, S.~Keel, \emph{Mori dream spaces and GIT}, Michigan Math.\ J.\ \textbf{48} (2000), 331--348.

\bibitem{Ig60} J.-I.~Igusa, \emph{Arithmetic variety of moduli for genus two}, Ann.\ of Math.\ (2) \textbf{72} (1960), 612--649.

\bibitem{Kat} P.~I.~Katsylo, \emph{Rationality of orbit spaces of irreducible representations of $\mathrm{SL}_2$}, Izv.\ Akad.\ Nauk SSSR Ser.\ Mat.\ \textbf{47} (1983), no.~1, 26--36.

\bibitem{Kat84} P.~I.~Katsylo, \emph{Rationality of the field of invariants of reducible representations of the group $\mathrm{SL}_2$}, Moscow Univ.\ Math.\ Bull.\ \textbf{39} (1984), no.~5, 80--83.

\bibitem{Kee} S.~Keel, \emph{Intersection theory of moduli space of stable $n$-pointed curves of genus zero}, Trans.\ Amer.\ Math.\ Soc.\ \textbf{330} (1992), no.~2, 545--574.

\bibitem{KM} S.~Keel, J.~McKernan, \emph{Contractible extremal rays on $\overline M_{0,n}$}, in Handbook of Moduli, Vol.~II, Adv.\ Lect.\ Math.\ \textbf{25}, Int.\ Press, Somerville, MA, 2013, pp.~115--130 (arXiv:alg-geom/9607009).

\bibitem{KP} B.~Kim, R.~Pandharipande, \emph{The connectedness of the moduli space of maps to homogeneous spaces}, in Symplectic Geometry and Mirror Symmetry (Seoul, 2000), World Sci.\ Publ., River Edge, NJ, 2001, pp.~187--201 (arXiv:math/0003168).

\bibitem{Kol} J.~Koll\'ar, \emph{Rational Curves on Algebraic Varieties}, Ergebnisse der Mathematik und ihrer Grenzgebiete (3), vol.~32, Springer-Verlag, Berlin, 1996.

\bibitem{KolSB} J.~Koll\'ar, \emph{Severi--Brauer varieties; a geometric treatment}, arXiv:1606.04368.

\bibitem{Ko} M.~Kontsevich, \emph{Enumeration of rational curves via torus actions}, in The Moduli Space of Curves (Texel Island, 1994), Progr.\ Math.\ \textbf{129}, Birkh\"auser, Boston, MA, 1995, pp.~335--368.

\bibitem{Mas14} A.~Massarenti, \emph{The automorphism group of $\overline M_{g,n}$}, J.\ Lond.\ Math.\ Soc.\ (2) \textbf{89} (2014), no.~1, 131--150.

\bibitem{MM} A.~Massarenti, M.~Mella, \emph{On the automorphisms of Hassett's moduli spaces}, Trans.\ Amer.\ Math.\ Soc.\ \textbf{369} (2017), no.~12, 8879--8902.

\bibitem{Mumford} D.~Mumford, J.~Fogarty, F.~Kirwan, \emph{Geometric Invariant Theory}, 3rd ed., Ergebnisse der Mathematik und ihrer Grenzgebiete (2), vol.~34, Springer-Verlag, Berlin, 1994.

\bibitem{Pan} R.~Pandharipande, \emph{Intersections of $\mathbb Q$-divisors on Kontsevich's moduli space $\overline M_{0,n}(\PP^r,d)$ and enumerative geometry}, Trans.\ Amer.\ Math.\ Soc.\ \textbf{351} (1999), 1481--1505.

\bibitem{Spr} T.~A.~Springer, \emph{Sur les formes quadratiques d'indice z\'ero}, C.~R.\ Acad.\ Sci.\ Paris \textbf{234} (1952), 1517--1519.

\bibitem{Swi} D.~Swinarski, \emph{An elementary GIT construction of the moduli space of stable maps}, Illinois J.\ Math.\ \textbf{53} (2009), no.~3, 843--856 (arXiv:math/0604092).

\end{thebibliography}
\end{document}